\numberwithin{equation}{section}
\newcommand{\1}{\mathbbm {1}}
\newcommand{\Z}{{\mathbb Z}}
\newcommand{\C}{{\mathbb C}}
\newcommand{\wh}{{\widehat{\mathfrak h}}}
\newcommand{\I}{{\mathcal I}}
\newcommand{\tI}{\widetilde{\I}}
\newcommand{\CA}{{\mathcal A}}
\newcommand{\CE}{{\mathcal E}}
\newcommand{\J}{{\mathcal J}}
\newcommand{\CL}{{\mathcal L}}
\newcommand{\CP}{{\mathcal P}}
\newcommand{\W}{{\mathcal W}}
\newcommand{\tW}{\widetilde{\W}}
\newcommand{\mraff}{\mathrm{aff}}
\newcommand{\al}{\alpha}
\newcommand{\be}{\beta}
\newcommand{\dl}{\delta}
\newcommand{\gm}{\gamma}
\newcommand{\om}{\omega}
\newcommand{\la}{\langle}
\newcommand{\ra}{\rangle}
\newcommand{\bu}{\mathbf{u}}
\newcommand{\bv}{\mathbf{v}}
\newcommand{\hsl}{\widehat{sl}_2}
\newcommand{\tvp}{\tilde{\varphi}}
\newcommand{\Vaff}{V^{\mathrm{aff}}}
\newcommand{\Vh}{V_{\wh}}
\DeclareMathOperator{\Aut}{Aut}
\DeclareMathOperator{\ch}{ch}
\DeclareMathOperator{\End}{End}
\DeclareMathOperator{\Res}{Res}
\DeclareMathOperator{\spn}{span}
\DeclareMathOperator{\Vir}{Vir}
\DeclareMathOperator{\wt}{wt}
\newtheorem{thm}{Theorem}[section]
\newtheorem{prop}[thm]{Proposition}
\newtheorem{lem}[thm]{Lemma}
\newtheorem{rmk}[thm]{Remark}
\newtheorem{conjecture}[thm]{Conjecture}
\begin{document}

\title
{$W$-algebras related to parafermion algebras}

\author[C. Dong]{Chongying Dong}
\address{Department of Mathematics, University of California, Santa Cruz,
CA 95064} \email{dong@math.ucsc.edu}

\author[C. H. Lam]{Ching Hung Lam}
\address{Department of Mathematics and National Center for Theoretical
Sciences, National Cheng Kung University, Tainan, Taiwan 701}
\email{chlam@mail.ncku.edu.tw}

\author[H. Yamada]{Hiromichi Yamada}
\address{Department of Mathematics, Hitotsubashi University, Kunitachi,
Tokyo 186-8601, Japan}
\email{yamada@math.hit-u.ac.jp}

\subjclass[2000]{17B69, 17B65}

\begin{abstract}
We study a $W$-algebra of central charge $2(k-1)/(k+2)$, $k = 2, 3, \ldots$
contained in the commutant of a Heisenberg algebra in a simple affine
vertex operator algebra $L(k,0)$ of type $A_1^{(1)}$ with level $k$. We
calculate the operator product expansions of the $W$-algebra. We also
calculate some singular vectors in the case $k \le 6$ and determine the
irreducible modules and Zhu's algebra. Furthermore, the rationality and the
$C_2$-cofiniteness are verified for such $k$.
\end{abstract}

\maketitle

\section{Introduction}
A Virasoro field and its finitely many primary fields generate a
$W$-algebra of various kinds. Those $W$-algebras are important in the study
of vertex operator algebras, for they provide many interesting examples. In
\cite{Hornfeck}, a possible structure of a $W$-algebra $W(2,3,4,5)$ with
primary fields of conformal weight $3$, $4$ and $5$ was discussed. Such an
algebra was constructed in the commutant of a Heisenberg algebra in a Weyl
module $V(k,0)$ for an affine Lie algebra $\widehat{sl}_2$ of type
$A_1^{(1)}$ with level $k$ in \cite{BEHHH}. Our main concern is the
commutant $K_0$ of a Heisenberg algebra in a simple quotient $L(k,0)$ of
$V(k,0)$, where $k$ is an integer greater than $1$. The commutant $K_0$,
including the characters of its irreducible modules, was studied in
\cite{CGT} (see \cite{GQ} also). In this paper, we study $K_0$ from a point
of view of vertex operator algebra. The central charge of $K_0$ is
$2(k-1)/(k+2)$, which coincides with the central charge of the parafermion
algebra. We refer the reader to \cite{DL} for the relationship between
$K_0$ and the parafermion algebra.

It is also known that $K_0$ appears as the commutant of a certain
subalgebra in the vertex operator algebra $V_{\sqrt{2}A_{k-1}}$ associated
with $\sqrt{2}A_{k-1}$, where $\sqrt{2}A_{k-1}$ denotes $\sqrt{2}$ times an
ordinary root lattice of type $A_{k-1}$ \cite{LY}. Such a realization of
$K_0$ leads to a natural study of $V_{\sqrt{2}A_{k-1}}$ as a module for
$K_0$. The $K_0$-module structure of $V_{\sqrt{2}A_{k-1}}$ for some special
$k$ is expected to play an important role in a better understanding of the
moonshine vertex operator algebra $V^{\natural}$ \cite{FLM}.

It is widely believed that $K_0$ is a rational and $C_2$-cofinite vertex
operator algebra. It is also anticipated that $K_0$ has exactly $k(k+1)/2$
inequivalent irreducible modules (see Conjecture \ref{Conj:W-properties}).
In this paper, we treat these subjects. The key of our arguments here is a
detailed analysis of some singular vectors. Unfortunately, we do not
succeed in describing those singular vectors explicitly for a general $k$.
Therefore, we restrict ourselves to the case $k\le 6.$ We determine Zhu's
algebra and classify the irreducible modules of $K_0$ for $k \le 6$.
Moreover, we show that $K_0$ is rational and $C_2$-cofinite for such $k$.
In the case $k \ge 3$, we show that $K_0$ is generated by a primary field
of weight $3$.

The organization of the paper is as follows. In Section \ref{Sect:W2345},
we introduce the conformal vector $\omega$ of central charge $2(k-1)/(k+2)$
and Virasoro primary vectors $W^3$, $W^4$ and $W^5$ of weight $3$, $4$ and
$5$, respectively in the commutant $N_0$ of a Heisenberg algebra in the
Weyl module $V(k,0)$ for $\widehat{sl}_2$ with level $k$, where $k$ is an
integer greater than $1$. Such a vector $W^i$, $i=3,4,5$ is unique up to a
scalar multiple. Let $W^i_n$ be a component operator, that is, the
coefficient of $x^{-n-1}$ in the vertex operator associated with $W^i$. The
vectors $W^i_n W^j$, $3 \le i \le j \le 5$, $0 \le n \le i+j-1$ are known
in \cite{BEHHH}. We compute these vectors in the Weyl module $V(k,0)$ and
express them as linear combinations of vectors of normal form (see
\eqref{eq:normal-form}). The computation has been done by a computer
algebra system Risa/Asir. The results can be found in Appendix
\ref{App:OPE}. In the computation of the vectors $W^i_n W^j$, we do not
assume that $k$ is an integer greater than $1$. Thus in Appendix
\ref{App:OPE}, we can think of the parameter $k$ as a formal variable.
Using the explicit expression of $W^i_n W^j$ as a linear combination of
vectors of normal form, we study a subalgebra $\tW$ of the commutant $N_0$
generated by $\omega$, $W^3$, $W^4$ and $W^5$. It turns out that $\tW$ is
in fact generated by $W^3$ if $k \ge 3$. As a consequence, the automorphism
group of $\tW$ is of order $2$ and it is generated by an automorphism which
maps $W^3$ to its negative if $k \ge 3$. We also show that Zhu's algebra of
$\tW$ is commutative. It is known that $\tW$ has two (resp. four) linearly
independent singular vectors of weight $8$ (resp. $9$) \cite{BEHHH}. We use
these singular vectors to determine Zhu's algebra of $K_0$ for $k = 5, 6$
in Section \ref{Sect:small-k-case}. In addition to them, a weight $10$
singular vector is necessary to establish the $C_2$-cofiniteness for $k =
5, 6$ in Section \ref{Sect:small-k-case}.

Since $k$ is an integer greater than $1$, the vertex operator algebra
$V(k,0)$ possesses a unique maximal ideal $\J$, which is generated by
$e(-1)^{k+1}\1$ \cite{K}. In Section \ref{Sect:maximal-ideal-tI}, we study
the commutant $K_0$ of a Heisenberg algebra in the quotient vertex operator
algebra $L(k,0) = V(k,0)/\J$. We denote the image of $\tW$ in $L(k,0)$ by
$\W$. Then $\W$ is a subalgebra of $K_0$. The ideal $\J$ is not contained
in the commutant $N_0$. It is expected that a unique maximal ideal $\J \cap
N_0$ of $N_0$ is generated by a weight $k+1$ vector $\bu^0 =
f(0)^{k+1}e(-1)^{k+1}\1$ (see Lemma \ref{lem:unique-max-ideal}, Conjecture
\ref{Conj:ideal-generator}).

In Section \ref{Sect:VOA-W}, we embed $L(k,0)$ into a vertex operator
algebra $V_L$ associated with a lattice $L$ of type $A_1^{\oplus k}$. This
is accomplished by the use of level-rank duality \cite[Chapter 14]{DL}. Let
$\Vaff$ be a subalgebra of $V_L$ obtained by the embedding. Then $\Vaff
\cong L(k,0)$. There is a sublattice $L'$ of $L$ isomorphic to
$\sqrt{2}A_{k-1}$ such that the vertex operator algebra $V_{L'}$ associated
with $L'$ is the commutant of the vertex operator algebra $V_{\Z\gamma}$
associated with a rank one lattice $\Z\gamma$ in $V_L$. We have $V_L
\supset \Vaff \supset V_{\Z\gamma}$ and $K_0 \cong \Vaff \cap V_{L'}$. That
is, $K_0$ is isomorphic to the commutant of $V_{\Z\gamma}$ in $\Vaff$. This
consideration has some advantages. For instance, using the representation
theory of the vertex operator algebra $V_{\Z\gamma}$, we construct a
certain family of irreducible $K_0$-modules inside $V_{L^\perp}$ and study
their properties, where $L^\perp$ is the dual lattice of $L$.

The singular vectors of weight at most $10$ in $\tW$ are calculated
explicitly for any $k$. However, this is not the case for $\bu^0$. We can
describe $\bu^0$ as a linear combination of vectors of normal form only for
a given small $k$. For this reason, we deal with only the case $k \le 6$ in
Section \ref{Sect:small-k-case}. If $k = 2$, $3$ or $4$, then $\W$ is
degenerate. In fact, it turns out that $\bu^0$ is a scalar multiple of
$W^{k+1}$ for $k = 2,3,4$. In such a case, $\W$ is isomorphic to a
well-known vertex operator algebra. Thus the main part of Section
\ref{Sect:small-k-case} is devoted to the case $k = 5,6$. We show that $\W
= K_0$ and classify its irreducible modules. Moreover, we show that $K_0$
is rational and $C_2$-cofinite. We note that $K_0$ is related to a $2A$,
$3A$, $4A$, $5A$ or $6A$ element of the Monster simple group according as
$k=2$, $3$, $4$, $5$ or $6$ (see \cite[Section 3, Appendix B]{LYY},
\cite[Section 4]{Matsuo}). In fact, this is part of the motivation of our
work.

The argument heavily depends on singular vectors $\bv^0$, $\bv^1$ and
$\bv^2$ of weight $8$, $9$ and $10$, respectively in $\tW$ and on singular
vectors $\bu^r = (W^3_1)^r\bu^0$ of weight $k+1+r$, $r = 0,1,2,3$ in $\W$.
It seems that we can take $W^3_1\bv^0$ and $(W^3_1)^2\bv^0$ in place of
$\bv^1$ and $\bv^2$, respectively. However, we do not verify it. The
importance of $\bu^0$ is clear from the degenerate case, namely, the case
$k = 2,3,4$, for $\bu^0$ is a scalar multiple of $W^3$, $W^4$ or $W^5$ in
such a case. It would be difficult to express $\bu^r$, $r = 0,1,2,3$ in
terms of $\omega$, $W^3$, $W^4$ and $W^5$ for an arbitrary $k$. We should
take a different approach for a general case.

Our notation is fairly standard \cite{FLM, LL}. Let $V$ be a vertex
operator algebra and $(M, Y_M)$ be its module. Then $Y_M(v,x) = \sum_{n \in
\Z} v_n x^{-n-1}$ is the vertex operator associated with $v \in V$. The
linear operator $v_n$ on $M$ is called a component operator. For a
subalgebra $U$ of $V$ and a subset $S$ of $M$, let $U\cdot S =
\spn\{u_nw\,|\,u \in U, w \in S, n \in \Z\}$, which is the $U$-submodule of
$M$ generated by $S$.

Part of the results in this paper was announced in \cite{DLY2}. We remark
that $N_0$ (resp. $K_0$) is denoted by $\tW$ (resp. $\W$) in \cite{DLY2}.
In this paper, we distinguish $N_0$ and $\tW$ (resp. $K_0$ and $\W$)
clearly to avoid confusion.

\section{$\tW$ and its singular vectors}\label{Sect:W2345}
Let $\{ h, e, f\}$ be a standard Chevalley basis of $sl_2$. Thus $[h,e] =
2e$, $[h,f] = -2f$, $[e,f] = h$ for the bracket and $\la h,h \ra = 2$, $\la
e,f \ra = 1$, $\la h,e \ra = \la h,f \ra = \la e,e \ra = \la f,f \ra = 0$
for the normalized Killing form. We fix an integer $k \ge 2$. Let $V(k,0) =
V_{\hsl}(k,0)$ be a Weyl module for the affine Lie algebra $\widehat{sl}_2
= sl_2 \otimes \C[t,t^{-1}] \oplus \C C$ with level $k$, that is, a Verma
module for $\widehat{sl}_2$ with level $k$ and highest weight $0$. Let $\1$
be its canonical highest weight vector, which is called the vacuum vector.
Then $sl_2 \otimes \C[t]$ acts as $0$ and $C$ acts as $k$ on $\1$. We
denote by $h(n)$, $e(n)$ and $f(n)$ the operators on $V(k,0)$ induced by
the action of $h \otimes t^n$, $e \otimes t^n$ and $f \otimes t^n$,
respectively. Thus $h(n)\1 = e(n)\1 = f(n)\1 = 0$ for $n \ge 0$ and
\begin{equation}\label{eq:affine-commutation}
[a(m), b(n)] = [a,b](m+n) + m \la a,b \ra \delta_{m+n,0}k
\end{equation}
for $a, b \in \{h,e,f\}$. The elements
\begin{equation}\label{eq:V-basis}
h(-i_1) \cdots h(-i_p) e(-j_1) \cdots e(-j_q) f(-m_1) \cdots f(-m_r)\1,
\end{equation}
$i_1 \ge \cdots \ge i_p \ge 1$, $j_1 \ge \cdots \ge j_q \ge 1$, $m_1 \ge
\cdots \ge m_r \ge 1$ form a basis of $V(k,0)$.

Let $a(x) = \sum_{n \in \Z} a(n)x^{-n-1}$ for $a \in \{h,e,f\}$ and
\begin{equation*}
a(x)_n b(x) = \Res_{x_1} \Big( (x_1 - x)^n a(x_1)b(x) - (-x+x_1)^n
b(x)a(x_1) \Big).
\end{equation*}
Then the vertex operator $Y(v,x) = \sum_{n \in \Z} v_n x^{-n-1} \in (\End
V(k,0))[[x,x^{-1}]]$ associated with $v = a^1(n_1) \cdots a^r(n_r)\1$ is
given by
\begin{equation}\label{eq:def-vertex-operator}
Y(a^1(n_1) \cdots a^r(n_r)\1,x) = a^1(x)_{n_1} \cdots a^r(x)_{n_r}1
\end{equation}
for $a^i \in \{h,e,f\}$ and $n_i \in \Z$, where $1$ denotes the identity
operator. Set
\begin{align*}
\omega_{\mraff} &= \frac{1}{2(k+2)} \Big( \frac{1}{2}h(-1)^2\1 +
e(-1)f(-1)\1 + f(-1)e(-1)\1 \Big)\\
&= \frac{1}{2(k+2)} \Big( -h(-2)\1 + \frac{1}{2}h(-1)^2\1 +
2e(-1)f(-1)\1 \Big).
\end{align*}
Then $(V(k,0), Y, \1, \omega_{\mraff})$ is a vertex operator algebra with
the conformal vector $\omega_{\mraff}$, whose central charge is $3k/(k+2)$
\cite{FZ} (see \cite[Section 6.2]{LL} also). The vector of the form
\eqref{eq:V-basis} is an eigenvector for $(\omega_{\mraff})_1$ with
eigenvalue $i_1 + \cdots + i_p + j_1 + \cdots + j_q + m_1 + \cdots + m_r$.
The eigenvalue is called the weight of the vector in $V(k,0)$. We denote
the weight of $v$ by $\wt v$.

We consider two subalgebras $\wh = \C h \otimes \C[t,t^{-1}] \oplus \C C$
and $\wh_\ast = ( \oplus_{n \ne 0} \C h \otimes t^n) \oplus \C C$ of the
Lie algebra $\hsl$. Let $\Vh(k,0)$ be the subalgebra of
$V_{\hsl}(k,0)$ with basis $h(-i_1) \cdots h(-i_p)\1$, $i_1 \ge \cdots
\ge i_p \ge 1$. That is, $\Vh(k,0)$ is a vertex operator algebra
associated with the Heisenberg algebra $\wh_\ast$ of level $k$. The
conformal vector of $\Vh(k,0)$ is given by
\begin{equation}\label{eq:omega_gamma}
\omega_{\gamma} = \frac{1}{4k} h(-1)^2\1.
\end{equation}
Its central charge is $1$.

Now, $V(k,0)$ is completely reducible as a $V_{\wh}(k,0)$-module.
More precisely,
\begin{equation}\label{eq:dec-Heisenberg}
V(k,0) = \oplus_\lambda M_{\wh}(k,\lambda) \otimes N_\lambda.
\end{equation}
Here $M_{\wh}(k,\lambda)$ denotes an irreducible highest weight module
for $\wh$ with a highest weight vector $v_\lambda$ such that $h(0)v_\lambda
= \lambda v_\lambda$ and
\begin{equation*}
N_\lambda = \{ v \in V(k,0)\,|\, h(m)v = \lambda\delta_{m,0}v \text{ for
} m \ge 0\}.
\end{equation*}
The index $\lambda$ runs over all even integers, since the eigenvalues of
$h(0)$ in $V(k,0)$ are even integers. In fact, $h(0)$ acts as $2(q-r)$
on the vector of the form \eqref{eq:V-basis}.

In the case $\lambda = 0$, $M_{\wh}(k,0)$ is identical with
$V_{\wh}(k,0)$ and $N_0$ is the commutant \cite[Theorem 5.1]{FZ}
of $V_{\wh}(k,0)$ in $V(k,0)$.
The commutant $N_0$ is a vertex operator algebra with the conformal vector
$\omega = \omega_{\mraff} - \omega_{\gamma}$;
\begin{equation}\label{eq:omega}
\omega = \frac{1}{2k(k+2)} \Big( -kh(-2)\1 -h(-1)^2\1 + 2k e(-1)f(-1)\1
\Big),
\end{equation}
whose central charge is $3k/(k+2) - 1 = 2(k-1)/(k+2)$. Since the conformal
vector of $V_{\wh}(k,0)$ is $\omega_{\gamma}$, we have $N_0 = \{ v \in
V(k,0)\,|\, (\omega_\gamma)_0 v = 0\}$ by \cite[Theorem 5.2]{FZ}. It is
also the commutant of $\Vir(\omega_{\gamma})$ in $V(k,0)$, where
$\Vir(\omega_{\gamma})$ is the subalgebra of $V(k,0)$ generated by
$\omega_{\gamma}$. Since $\omega_1 v = (\omega_{\mraff})_1 v$ for $v \in
N_0$, the weight of $v$ in $N_0$ agrees with that in $V(k,0)$.

By a direct computation, we see that the dimension of the weight $i$
subspace $(N_0)_{(i)}$ of $N_0$ is $2$, $4$ and $6$ for $i=3$, $4$ and $5$,
respectively. Furthermore, we can verify that there is up to a scalar
multiple, a unique Virasoro primary vector $W^i$ in $(N_0)_{(i)}$ for $i =
3,4,5$. Here a Virasoro primary vector of weight $i$ means that $\omega_2
W^i = \omega_3 W^i = 0$ and $\omega_1 W^i  = i W^i$. In this paper, we take
\begin{equation}\label{eq:W3}
\begin{split}
W^3 &= k^2 h(-3)\1 + 3 k h(-2)h(-1)\1 +
2h(-1)^3\1 - 6k h(-1)e(-1)f(-1)\1 \\
& \quad + 3 k^2e(-2)f(-1)\1 - 3 k^2e(-1)f(-2)\1.
\end{split}
\end{equation}
As to $W^4$ and $W^5$, see Appendix \ref{App:def-W3-W4-W5}.

We denote by $\tW$ the subalgebra of $N_0$ generated by $\omega$, $W^3$,
$W^4$ and $W^5$. Actually, $\tW$ coincides with $W(2,3,4,5)$ of
\cite{BEHHH}.

\begin{rmk}
Our $W^3$, $W^4$ and $W^5$ are scalar multiples of $W_3$, $W_4$ and $W_5$
in the notation of \cite[Appendix A]{BEHHH}. In fact,
\begin{align*}
W^3 &= \frac{1}{2}W_3,\\
W^4 &= \frac{16k+17}{144k(2k+3)} W_4,\\
W^5 &= -\frac{64k+107}{3456k^2(2k+3)(3k+4)} W_5.
\end{align*}
Notice that $h$, $e$, $f$ and $\omega$ are denoted by $J^o$, $J^+$, $J^-$
and $L$, respectively in \cite{BEHHH}.
\end{rmk}

Recall that $W^i_n$ is a component operator, that is, the coefficient of
$x^{-n-1}$ in the vertex operator associated with $W^i$. The computation of
$W^i_n W^j$, $3 \le i \le j \le 5$, $0 \le n \le i+j-1$ has been done in
\cite{BEHHH}. In this paper, we compute $W^i_n W^j$ as an element of the
vertex operator algebra $V(k,0)$ by using the definition
\eqref{eq:def-vertex-operator} of the component operator $v_n$ and the
commutation relation \eqref{eq:affine-commutation} of the operators $a(n)$,
$a \in \{ h,e,f\}$, $n \in \Z$ on $V(k,0)$, together with the property
$a(n)\1 = 0$ for $n \ge 0$ of the vacuum vector $\1$. Every element of
$V(k,0)$ involved in the computation here is expressed as a linear
combination of the basis \eqref{eq:V-basis} of $V(k,0)$. The results can be
found in Appendix \ref{App:OPE} (see Remark \ref{Rem:tW-basis} also). For
instance,
\begin{equation}\label{eq:W3m1W3}
\begin{split}
W^3_1 W^3 &= -\big( 162 k^3 (k-2) (k+2) (3 k+4)/(16
k+17)\big)
\omega_{-3}\1 \\
& \quad + \big(288 k^3 (k-2) (k+2)^2 (3 k+4)/(16 k+17)\big)
\omega_{-1}\omega\\
& \quad + \big(36 k (2 k+3)/(16 k+17)\big) W^4,
\end{split}
\end{equation}
\begin{equation}\label{eq:W3m1W4}
\begin{split}
W^3_1 W^4 &= \big(1248 k^2 (k-3) (k+2) (2 k+1) (2 k+3)
/(64 k+107)\big) \omega_{-1}W^3\\
& \quad - \big(48 k^2 (k-3) (2 k+1) (2 k+3)
(2 k+7)/(64 k+107)\big) W^3_{-3}\1\\
& \quad - \big(12 k (3 k+4) (16 k+17) /(64 k+107)\big) W^5.
\end{split}
\end{equation}

Equation \eqref{eq:W3m1W3} can be obtained once we express $W^3_1 W^3$,
$\omega_{-3}\1$, $\omega_{-1}\omega$ and $W^4$ as linear combinations of
the basis \eqref{eq:V-basis}. The equations for the other $W^i_n W^j$'s are
obtained similarly. Each $W^i_n W^j$ can be described by using $\omega$,
$W^3$, $W^4$ and $W^5$. That is, $\tW$ is closed within these four
elements. We also notice that
\begin{equation*}
\begin{split}
W^3_5 W^3 &= 12 k^3 (k-2) (k-1) (3 k+4) \1,\\
W^3_4 W^3 &= 0,\\
W^3_3 W^3 &= 36k^3(k-2)(k+2)(3k+4)\omega.
\end{split}
\end{equation*}

\begin{rmk}\label{rmk:parameter-k}
Our computation of $W^i_nW^j$'s in $V(k,0)$ has been done by a computer
algebra system Risa/Asir. During the computation, we only use the condition
$a(n)\1 = 0$ for $a \in \{h,e,f\}$, $n \ge 0$, the commutation relation
\eqref{eq:affine-commutation} and the definition
\eqref{eq:def-vertex-operator} of vertex operators on $V(k,0)$. The
parameter $k$ is treated as a formal variable. That is, we do not assume
that $k$ is an integer greater than $1$ in the computation. Hence in
Appendix \ref{App:OPE}, $k$ can be considered as a formal variable.
\end{rmk}

\begin{rmk}\label{rmk:W3-generation}
From $W^3_3W^3$, $W^3_1W^3$ and $W^3_1W^4$, we see that $\tW$ is generated
by a single element $W^3$ if $k \ge 3$. It turns out that $W^3$, $W^4$ or
$W^5$ is contained in a maximal ideal of $\tW$ if $k = 2$, $3$ or $4$ (see
Section \ref{Sect:small-k-case} for detail). These are the degenerate
cases.
\end{rmk}

Let $\Vir(\omega)$ be the subalgebra of $\tW$ generated by $\omega$. Each
$W^i_n W^j$, $3 \le i \le j \le 5$, $0 \le n \le i+j-1$ is a linear
combination of elements in $\Vir(\omega)$, $\Vir(\omega)\cdot W^p_m\1$ with
$p < i+j$ and $m \le -1$, and $\Vir(\omega)\cdot W^r_k W^s_m\1$ with $r+s <
i+j$, $r \le s$ and $k, m \le -1$, where $p,r,s \in \{ 3,4,5\}$ (see
Appendix \ref{App:OPE}). Note also that $\omega_n W^i = 0$ if $n \ge 2$,
$\omega_1 W^i = i W^i$ and $\omega_0 W^i = W^i_{-2}\1$ for $i = 3,4,5$.
Hence, using basic formulas for a vertex operator algebra \cite[(3.1.9),
(3.1.12)]{LL}
\begin{align}
[u_m,v_n] &= \sum_{i \ge 0} \binom{m}{i} (u_iv)_{m+n-i},\label{eq:basic-formula1}\\
(u_m v)_n &= \sum_{i \ge 0} (-1)^i \binom{m}{i} \big(u_{m-i}v_{n+i} -
(-1)^m v_{m+n-i}u_i\big)\label{eq:basic-formula2},
\end{align}
we obtain the following lemma by induction.

\begin{lem}\label{spanning-set}
$\tW$ is spanned by the elements
\begin{equation}\label{eq:normal-form}
\omega_{-i_1} \cdots \omega_{-i_p} W^3_{-j_1} \cdots W^3_{-j_q} W^4_{-m_1}
\cdots W^4_{-m_r} W^5_{-n_1} \cdots W^5_{-n_s}\1
\end{equation}
with $i_1 \ge \cdots \ge i_p \ge 1$, $j_1 \ge \cdots \ge j_q \ge 1$, $m_1
\ge \cdots \ge m_r \ge 1$ and $n_1 \ge \cdots \ge n_s \ge 1$.
\end{lem}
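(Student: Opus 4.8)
The plan is to argue by induction on the weight, reducing an arbitrary product of component operators applied to $\1$ to the claimed normal form \eqref{eq:normal-form}. Since $\tW$ is generated by $\omega$, $W^3$, $W^4$ and $W^5$, every element of $\tW$ is a linear combination of expressions of the shape $a^1_{n_1}\cdots a^t_{n_t}\1$ with each $a^i \in \{\omega, W^3, W^4, W^5\}$ and $n_i \in \Z$. First I would observe that only $n_i \le -1$ can occur at the innermost position (otherwise $a^t_{n_t}\1 = 0$ when $n_t \ge \wt a^t$, and the cases $0 \le n_t \le \wt a^t - 1$ are handled below), and more generally that any component operator with nonnegative index can be moved to the right using the commutator formula \eqref{eq:basic-formula1}: commuting $a^i_{n_i}$ ($n_i \ge 0$) past $a^{i+1}_{n_{i+1}}$ produces terms $(a^i_j a^{i+1})_{m}$ with $j \ge 0$, and each $a^i_j a^{i+1}$ with $j \ge 0$ is, by the OPE computations recalled before the lemma (Appendix \ref{App:OPE}, together with $\omega_n W^i = 0$ for $n \ge 2$, $\omega_1 W^i = i W^i$, $\omega_0 W^i = W^i_{-2}\1$), a linear combination of elements of the form $\Vir(\omega)$, $\Vir(\omega)\cdot W^p_m\1$, and $\Vir(\omega)\cdot W^r_k W^s_m\1$ of \emph{strictly smaller total weight} $p$ resp. $r+s$ in the $W$-generators. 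Hence repeated use of \eqref{eq:basic-formula1} and \eqref{eq:basic-formula2} lets us either annihilate the vector, or rewrite it as a combination of monomials in strictly fewer (or strictly lower-weight) $W$-type factors, which are covered by the induction hypothesis, or push all nonnegative-index operators to the far right where they kill $\1$.

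Next, once we are reduced to monomials $a^1_{n_1}\cdots a^t_{n_t}\1$ with all $n_i \le -1$, the remaining task is to sort the factors into the prescribed order: all $\omega$'s first, then all $W^3$'s, then all $W^4$'s, then all $W^5$'s, and within each block with weakly decreasing (i.e.\ $\le -1$ and non-increasing) indices. Swapping two adjacent factors $a^i_{n_i} a^{i+1}_{n_{i+1}}$ with $n_i, n_{i+1} \le -1$ via \eqref{eq:basic-formula1} again introduces a commutator $\sum_{j \ge 0}\binom{n_i}{j}(a^i_j a^{i+1})_{n_i+n_{i+1}-j}$; since $j \ge 0$, every $a^i_j a^{i+1}$ appearing is again of strictly smaller $W$-weight, so by induction the correction terms already lie in the span of \eqref{eq:normal-form}. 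Thus the ordering of the \emph{types} of factors can be achieved modulo lower-complexity terms. To order the indices \emph{within} a block of equal-type factors, note that for $a = a' \in \{\omega, W^3, W^4, W^5\}$ we have $[a_m, a_n] = \sum_{j\ge 0}\binom{m}{j}(a_j a)_{m+n-j}$, and $a_j a$ for $j \ge 0$ is again of strictly smaller $W$-weight (for $a = \omega$ one uses the Virasoro relations directly), so the same induction applies: adjacent transpositions of indices within a block cost only lower-complexity terms, and a finite number of them puts the indices in non-increasing order. A suitable induction parameter that decreases under all these moves is the pair (total weight in $W$-generators, number of inversions in the sequence of factor-types and indices), ordered lexicographically — or more simply one inducts on the weight of the vector in $N_0$, as the weight is preserved by all rewrites and within a fixed weight only finitely many normal-form monomials occur.

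The main obstacle, and the point where one must be careful rather than merely formal, is the bookkeeping that guarantees the induction actually terminates: one must check that \emph{every} $a^i_j a^{i+1}$ and $a^i_j a^i$ with $j \ge 0$ that can arise — not just the $W^i_n W^j$ with $0 \le n \le i+j-1$ tabulated in Appendix \ref{App:OPE}, but also those involving $\omega$ — is expressible in terms of $\omega, W^3, W^4, W^5$ with strictly smaller complexity, so that no new generators or higher-weight $W$-factors are ever created. This is precisely the content of the statement preceding the lemma that ``$\tW$ is closed within these four elements'' and that each $W^i_nW^j$ is a combination of $\Vir(\omega)$, $\Vir(\omega)\cdot W^p_m\1$ with $p < i+j$, and $\Vir(\omega)\cdot W^r_kW^s_m\1$ with $r+s < i+j$; granting that, together with $\omega_nW^i = 0$ ($n\ge 2$), $\omega_1W^i = iW^i$, $\omega_0W^i = W^i_{-2}\1$, and the Virasoro commutation relations among the $\omega_n$, the induction closes and the lemma follows. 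I would therefore organize the write-up as: (i) reduce to all-negative indices by pushing nonnegative ones right via \eqref{eq:basic-formula1}; (ii) sort factor types via \eqref{eq:basic-formula1}, absorbing correction terms by induction; (iii) sort indices within each block the same way; (iv) verify termination via the weight (or the lexicographic complexity) parameter.
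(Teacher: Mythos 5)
Your proposal is correct and follows essentially the same route as the paper, which simply asserts the lemma "by induction" from the commutator and iterate formulas \eqref{eq:basic-formula1}--\eqref{eq:basic-formula2} together with the fact that each $W^i_nW^j$ ($n\ge 0$) and $\omega_nW^i$ lies in the span of $\Vir(\omega)$, $\Vir(\omega)\cdot W^p_m\1$ with $p<i+j$, and $\Vir(\omega)\cdot W^r_kW^s_m\1$ with $r+s<i+j$. Your write-up merely makes the induction parameter and the reordering steps explicit, which is exactly the bookkeeping the paper leaves to the reader.
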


An element of the form \eqref{eq:normal-form} is said to be of normal form.
Another notation is more convenient on some occasion. Set $L(n) =
\omega_{n+1}$, $W^3(n)=W^3_{n+2}$, $W^4(n)=W^4_{n+3}$ and
$W^5(n)=W^5_{n+4}$. All of these operators are of weight $-n$. The spanning
set of $\tW$ can also be described by
\begin{equation}\label{eq:normal-form2}
\begin{split}
L(-i_1) & \cdots L(-i_p) W^3(-j_1) \cdots W^3(-j_q)\\
& \qquad \cdot W^4(-m_1) \cdots W^4(-m_r) W^5(-n_1) \cdots W^5(-n_s)\1
\end{split}
\end{equation}
with $i_1 \ge \cdots \ge i_p \ge 2$, $j_1 \ge \cdots \ge j_q \ge 3$, $m_1
\ge \cdots \ge m_r \ge 4$ and $n_1 \ge \cdots \ge n_s \ge 5$. The weight of
the vector \eqref{eq:normal-form2} is
\begin{equation*}
i_1 + \cdots + i_p + j_1 + \cdots + j_q + m_1 + \cdots + m_r + n_1 + \cdots
+ n_s.
\end{equation*}

A vector $u$ of a $\tW$-module is called a highest weight vector for $\tW$
with highest weight $(a_2,a_3,a_4,a_5)$ if $L(n)u = W^i(n)u = 0$ for $n \ge
1$, $L(0)u = a_2 u$ and $W^i(0)u = a_i u$ for $i=3,4,5$. By a similar
argument as above, we see that the vectors
\begin{equation}\label{eq:normal-form3}
\begin{split}
L(-i_1) & \cdots L(-i_p) W^3(-j_1) \cdots W^3(-j_q)\\
& \qquad \cdot W^4(-m_1) \cdots W^4(-m_r) W^5(-n_1) \cdots W^5(-n_s)u
\end{split}
\end{equation}
with $i_1 \ge \cdots \ge i_p \ge 1$, $j_1 \ge \cdots \ge j_q \ge 1$, $m_1
\ge \cdots \ge m_r \ge 1$ and $n_1 \ge \cdots \ge n_s \ge 1$ span the
$\tW$-submodule $\tW\cdot u$ generated by such a highest weight vector $u$.

An automorphism of the Lie algebra $sl_2$ given by $h \mapsto -h$, $e
\mapsto f$, $f \mapsto e$ lifts to an automorphism $\theta$ of the vertex
operator algebra $V(k,0)$ of order $2$. The Virasoro element
$\omega_\gamma$ is invariant under $\theta$ by \eqref{eq:omega_gamma}.
Hence $\theta$ induces an automorphism of $N_0$. In fact, $\theta \omega =
\omega$ and $\theta W^3 = -W^3$ by \eqref{eq:omega} and \eqref{eq:W3},
respectively, and so $\theta W^4 = W^4$ and $\theta W^5 = -W^5$ by
\eqref{eq:W3m1W3} and \eqref{eq:W3m1W4}, respectively. More generally,
$\theta v = (-1)^{q+s}v$ for an element $v$ of the form
\eqref{eq:normal-form}. Let $\tW^{\pm} = \{ v \in \tW\,|\,\theta v = \pm
v\}$.

\begin{thm}\label{thm:aut-gp}
If $k \ge 3$, then the automorphism group $\Aut \tW$ of $\tW$ is $\la
\theta \ra$, a group of order $2$ generated by $\theta$.
\end{thm}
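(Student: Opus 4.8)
The plan is to exploit the fact that $\tW$ is generated by $W^3$ when $k\ge 3$ (Remark \ref{rmk:W3-generation}), together with the grading of $\tW$ by $L(0)$-weight and the rigidity of the low-weight homogeneous subspaces. Since $\la\theta\ra\subseteq\Aut\tW$ is already established, it remains to prove that any automorphism $\sigma$ of $\tW$ equals $\1$ or $\theta$. First I would record that any VOA automorphism $\sigma$ preserves the conformal vector $\omega$ (it is the unique conformal vector compatible with the grading, or one can invoke that $\sigma$ commutes with $L(0)=\omega_1$ and fixes the vacuum, so it preserves each homogeneous subspace $\tW_{(i)}$ and the Virasoro pair $(\omega_1,\omega_0)$-action, forcing $\sigma\omega=\omega$ since $\tW_{(2)}$ is one-dimensional). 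Consequently $\sigma$ preserves the subspace of Virasoro primary vectors of each weight.

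Next, because $\tW_{(3)}$ is spanned by $L(-3)\1$ (weight-$3$ part coming from $\Vir(\omega)$, but note $L(-3)\1=0$ is not primary) — more precisely, by the analysis in Section \ref{Sect:W2345} the space of Virasoro primary vectors in $\tW_{(3)}$ is one-dimensional, spanned by $W^3$. Hence $\sigma W^3=\xi W^3$ for some scalar $\xi\in\C^\times$. Since $W^3$ generates $\tW$, the automorphism $\sigma$ is completely determined by $\xi$, so it suffices to show $\xi=\pm1$. To pin down $\xi$, I would apply $\sigma$ to the structural identities for the products $W^3_nW^3$ displayed in the excerpt. Applying $\sigma$ to
\[
W^3_3W^3 = 36k^3(k-2)(k+2)(3k+4)\,\omega
\]
gives $\xi^2\cdot 36k^3(k-2)(k+2)(3k+4)\,\omega = 36k^3(k-2)(k+2)(3k+4)\,\omega$; since $k\ge 3$ the scalar is nonzero, so $\xi^2=1$, i.e. $\xi=\pm1$. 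When $\xi=1$, $\sigma$ fixes the generator $W^3$ and hence $\sigma=\1$; when $\xi=-1$, $\sigma$ and $\theta$ agree on the generator $W^3$ (recall $\theta W^3=-W^3$), hence $\sigma=\theta$. This gives $\Aut\tW=\la\theta\ra$.

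I should double-check one subtlety that I expect to be the only real obstacle: the assertion that the Virasoro primary subspace of $\tW_{(3)}$ is exactly one-dimensional and that $\sigma$ must send a primary vector to a primary vector. The latter is immediate once $\sigma\omega=\omega$, since primality is the vanishing of $\omega_2 v$ and $\omega_3 v$ together with $\omega_1 v=3v$, conditions preserved by any automorphism fixing $\omega$. The former is exactly the uniqueness statement made in Section \ref{Sect:W2345} (``there is up to a scalar multiple, a unique Virasoro primary vector $W^i$ in $(N_0)_{(i)}$''), so it holds a fortiori in the subspace $\tW_{(3)}\subseteq(N_0)_{(3)}$ provided $W^3\in\tW$, which it is by definition of $\tW$. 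The step that genuinely uses $k\ge 3$ is twofold: it is what makes $W^3$ a generator of $\tW$ (so that $\xi$ determines $\sigma$), and it is what makes the scalar $36k^3(k-2)(k+2)(3k+4)$ nonvanishing (so that $\xi^2=1$ can be concluded). For $k=2$ both of these fail, consistent with the theorem being stated only for $k\ge 3$.
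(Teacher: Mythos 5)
Your proposal is correct and follows essentially the same argument as the paper: both use the one-dimensionality of the weight-$3$ Virasoro primary space in $N_0$ to get $\sigma W^3=\xi W^3$, deduce $\xi^2=1$ from a product $W^3_nW^3$ whose coefficient is nonzero for $k\ge 3$, and conclude via the fact that $W^3$ generates $\tW$. The only (immaterial) difference is that you evaluate $\sigma$ on $W^3_3W^3=36k^3(k-2)(k+2)(3k+4)\,\omega$, while the paper uses $W^3_5W^3=12k^3(k-2)(k-1)(3k+4)\,\1$, which avoids even having to mention that $\sigma$ fixes $\omega$.
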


\begin{proof}
Let $\sigma \in \Aut \tW$. Since any Virasoro primary vector of weight $3$
in $N_0$ is a scalar multiple of $W^3$, we have $\sigma W^3 = \xi W^3$ for
some scalar $\xi \ne 0$. Then $\sigma(W^3_5W^3) = \xi^2 W^3_5W^3$. Now,
$W^3_5W^3$ is a nonzero scalar multiple of the vacuum vector $\1$ since we
are assuming that $k \ge 3$. Thus $\xi^2 = 1$ and the assertion holds, for
$\tW$ is generated by $W^3$.
\end{proof}

We will consider Zhu's algebra $A(\tW)$ of $\tW$. Zhu's algebra $A(V)$ of a
vertex operator algebra $(V, Y, \1, \omega)$ introduced by Zhu \cite{Zhu}
is very powerful for the study of irreducible $V$-modules. For $u,v \in V$
with $u$ being homogeneous, let
\begin{equation}\label{eq:ast-circ}
u \ast v = \sum_{i \ge 0} \binom{\wt u}{i} u_{i-1}v, \qquad u \circ v =
\sum_{i \ge 0} \binom{\wt u}{i} u_{i-2}v.
\end{equation}

We extend these two binary operations to arbitrary $u, v \in V$ by
linearity. The subspace $O(V)$ spanned by all $u \circ v$ with $u,v \in V$
is a two-sided ideal with respect to $\ast$. We denote by $[v]$ the image
of $v \in V$ in the quotient space $A(V) = V/O(V)$. Then $A(V)$ is an
associative algebra with the identity $[\1]$ by the binary operation $[u]
\ast [v] = [u \ast v]$. We write $u \sim v$ if $[u] = [v]$.

We need some formulas. By \cite[Lemma 2.1.2]{Zhu},
\begin{equation}\label{eq:circ-general}
\sum_{i \ge 0} \binom{\wt(u)+m}{i} u_{i-n-2}v \in O(V) \quad \text{for } n
\ge m \ge 0.
\end{equation}

By \eqref{eq:ast-circ}, $u_{-1}v$ can be written as a linear combination of
$u \ast v$ and $u_n v$, $n \ge 0$. Note that $\wt (u_n v) = \wt u + \wt v -
n -1$, $n \ge 0$ is strictly smaller than $\wt (u_{-1}v)$ for homogeneous
elements $u,v \in V$. For $n \ge 0$, \eqref{eq:circ-general} implies that
$[u_{-n-2}v]$ is a linear combination of $[u_{i-n-2}v]$, $i \ge 1$. As
before, note that $\wt (u_{i-n-2}v) < \wt(u_{-n-2}v)$ for $i \ge 1$.

Another useful formula \cite[Lemma 2.1.3]{Zhu} is
\begin{equation}\label{eq:Zhu-skew}
u \ast v - v \ast u  \sim \sum_{j \ge 0} \binom{\wt(u)-1}{j} u_j v.
\end{equation}

It follows from \cite[(4.2), (4.3)]{Wang} that
\begin{equation}\label{eq:Lnv}
[L(-n) v] = (-1)^n \big( (n-1)[\omega] \ast [v] + [L(0)v] \big) \quad
\text{for } n \ge 1.
\end{equation}
If $v$ is homogeneous, then $[L(-1)v] = -(\wt v)[v]$ and so
\cite[(1.2.17)]{Zhu} implies that
\begin{equation}\label{eq:Zhu-vm1}
[v_{-m-1}\1] = \frac{(-1)^m}{m !} \Big( \prod_{i=0}^{m-1} (\wt(v)+i) \Big)
[v]  \quad \text{for } m \ge 0.
\end{equation}

Let $o(v) = v_{\wt(v)-1}$ for a homogeneous element $v \in V$ and extend it
to an arbitrary element by linearity. If $U = \oplus_{n=0}^\infty U(n)$ is
an admissible $V$-module as in \cite{DLM} with $U(0) \ne 0$, then $o(v)$
acts on its top level $U(0)$. Zhu's theory \cite[Theorems 2.1.2 and
2.2.2]{Zhu} can be summarized as follows.

\medskip\noindent
(1) $o(u)o(v) = o(u \ast v)$ as operators on $U(0)$ and $o(v)$ acts as $0$
if $v \in O(V)$. Hence $U(0)$ is an $A(V)$-module, where $[v]$ acts as
$o(v)$ on $U(0)$.

\medskip\noindent
(2) $U \mapsto U(0)$ is a bijection between the set of equivalence classes
of irreducible admissible $V$-modules and the set of equivalence classes of
irreducible $A(V)$-modules.

\medskip
We now study Zhu's algebra $A(\tW) = \tW/O(\tW)$ of $\tW$. First, we show
that $A(\tW)$ is generated by $[\omega]$, $[W^3]$, $[W^4]$, and $[W^5]$.
For this purpose, take an element $v$ of the form \eqref{eq:normal-form}.
It is sufficient to confirm that $[v]$ is a linear combination of monomials
in $[\omega]$, $[W^3]$, $[W^4]$, and $[W^5]$. We proceed by induction on
weight. By \eqref{eq:Lnv}, we may assume that $v$ is
\begin{equation*}
W^3_{-j_1} \cdots W^3_{-j_q} W^4_{-m_1} \cdots W^4_{-m_r} W^5_{-n_1} \cdots
W^5_{-n_s}\1
\end{equation*}

Suppose $q \ge 1$. We separate $W^3_{-j_1}$ and the remaining part, say
$v'$, so that $v = W^3_{-j_1}v'$. If $j_1 \ge 2$, then $[W^3_{-j_1}v']$ is
a linear combination of the images in $A(\tW)$ of elements in $\tW$ having
strictly smaller weight than $v$ by \eqref{eq:circ-general}. If $j_1 = 1$,
then $[W^3_{-j_1}v']$ is a linear combination of $[W^3] \ast [v']$ and
$[W^3_n v']$, $n=0,1,2$. Since $\wt (W^3_n v') < \wt v$ for $n=0,1,2$, we
may assume that $q=0$ by induction on weight. We can apply a similar
argument to $W^4_{-m_1}$ or $W^5_{-n_1}$ if $r \ge 1$ or $s \ge 1$. In this
way, we see that $[v]$ is a linear combination of monomials in $[\omega]$,
$[W^3]$, $[W^4]$, and $[W^5]$ as required. Actually, \eqref{eq:Zhu-vm1} is
useful in computing $[v]$.

Next, we show that $A(\tW)$ is commutative. By a property of Zhu's algebra,
$[\omega]$ lies in the center of $A(\tW)$ \cite[Theorem 2.1.1]{Zhu}. It
follows from \eqref{eq:Zhu-skew} that
\begin{equation*}
W^3 \ast W^4 - W^4 \ast W^3 = W^3_0 W^4 + 2W^3_1 W^4 + W^3_2 W^4.
\end{equation*}

Using an explicit expression of $W^3_n W^4$, $n = 0,1,2$ as a linear
combination of vectors of normal form \eqref{eq:normal-form} given in
Appendix \ref{App:OPE}, we can describe its image $[W^3_n W^4]$ in $A(\tW)$
as a polynomial in $[\omega]$, $[W^3]$ and $[W^5]$. In fact, we can verify
that
\begin{equation*}
[W^3] \ast [W^4] - [W^4] \ast [W^3] = [W^3_0 W^4] + 2[W^3_1 W^4] + [W^3_2
W^4] = 0.
\end{equation*}

Likewise, we have
\begin{align*}
[W^3] \ast [W^5] - [W^5] \ast [W^3] &= [W^3_0 W^5] + 2[W^3_1 W^5] + [W^3_2
W^5]
= 0,\\
[W^4] \ast [W^5] - [W^5] \ast [W^4] &= [W^4_0 W^5] + 3[W^4_1 W^5] + 3[W^4_2
W^5] + [W^4_3 W^5] = 0.
\end{align*}

Thus $A(\tW)$ is commutative. We have obtained the following lemma.
\begin{lem}\label{lem:Zhu-tW}
$A(\tW)$ is commutative and it is generated by $[\omega]$, $[W^3]$, $[W^4]$
and $[W^5]$.
\end{lem}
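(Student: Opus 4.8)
The plan is to establish the two assertions of Lemma~\ref{lem:Zhu-tW} separately: first that $A(\tW)$ is generated by $[\omega], [W^3], [W^4], [W^5]$, and then that $A(\tW)$ is commutative. Both rely on reducing general elements of normal form modulo $O(\tW)$ by a weight-induction argument, using the explicit tables of $W^i_n W^j$ in Appendix~\ref{App:OPE}.

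For the generation statement, I would start from an arbitrary element $v$ of normal form \eqref{eq:normal-form} and argue by induction on $\wt v$. Using \eqref{eq:Lnv}, the leading Virasoro operators $\omega_{-i_1}\cdots\omega_{-i_p}$ can be absorbed into a product of $[\omega]$ with (the image of) a lower-complexity vector, so it suffices to treat $v = W^3_{-j_1}\cdots W^5_{-n_s}\1$. Peel off the first operator, say $W^3_{-j_1}$, writing $v = W^3_{-j_1}v'$. If $j_1\ge 2$, then \eqref{eq:circ-general} (with $u=W^3$, $n=j_1-2\ge 0$, $m=0$) expresses $[v]$ as a linear combination of $[W^3_{i-j_1}v']$ with $i\ge 1$, all of strictly smaller weight, so induction applies. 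If $j_1=1$, then by the definition \eqref{eq:ast-circ} of $\ast$, $[W^3_{-1}v'] = [W^3]\ast[v'] - \sum_{i\ge 1}\binom{3}{i}[W^3_{i-1}v']$, and the correction terms $W^3_0 v', W^3_1 v', W^3_2 v'$ all have weight $<\wt v$; so modulo terms handled by induction, $[v]$ equals $[W^3]\ast$ (something of smaller weight). Repeating for $W^4_{-m_1}$ and $W^5_{-n_1}$ when present, and invoking the inductive hypothesis on the reduced factors, shows $[v]$ is a polynomial in $[\omega],[W^3],[W^4],[W^5]$. (The base case is $v=\1$, giving $[\1]$.) Formula \eqref{eq:Zhu-vm1} streamlines the bookkeeping for the vectors $W^i_{-m-1}\1$ that arise.

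For commutativity, $[\omega]$ is central by \cite[Theorem~2.1.1]{Zhu}, so it remains to check that $[W^3],[W^4],[W^5]$ pairwise commute. By the skew-symmetry formula \eqref{eq:Zhu-skew} with $\wt W^3=3$, $\wt W^4=4$,
\begin{align*}
[W^3]\ast[W^4] - [W^4]\ast[W^3] &= [W^3_0 W^4] + 2[W^3_1 W^4] + [W^3_2 W^4],\\
[W^3]\ast[W^5] - [W^5]\ast[W^3] &= [W^3_0 W^5] + 2[W^3_1 W^5] + [W^3_2 W^5],\\
[W^4]\ast[W^5] - [W^5]\ast[W^4] &= [W^4_0 W^5] + 3[W^4_1 W^5] + 3[W^4_2 W^5] + [W^4_3 W^5].
\end{align*}
Each $W^i_n W^j$ on the right-hand side is given in Appendix~\ref{App:OPE} as an explicit linear combination of normal-form vectors of weight $i+j-n-1$; by the generation step just proved, its class in $A(\tW)$ is a polynomial in $[\omega],[W^3],[W^4],[W^5]$ which can be written out and evaluated. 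The claim is that in each of the three cases the three (or four) such polynomials sum to zero, so the bracket vanishes and $A(\tW)$ is commutative.

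The main obstacle is the last verification: it is a concrete but lengthy computation, carried out via the Risa/Asir output in Appendix~\ref{App:OPE}, of reducing the right-hand sides modulo $O(\tW)$ and confirming the cancellation. In particular one must use the reduction relations \eqref{eq:circ-general}, \eqref{eq:Lnv} and \eqref{eq:Zhu-vm1} repeatedly to bring every term to a polynomial in the four generators, and then check a polynomial identity (with $k$ treated as a formal parameter, per Remark~\ref{rmk:parameter-k}). No conceptual difficulty arises beyond organizing this calculation; the weight induction guarantees the procedure terminates.
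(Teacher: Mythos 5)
Your proposal is correct and follows essentially the same route as the paper: induction on weight using \eqref{eq:Lnv}, \eqref{eq:circ-general} and \eqref{eq:ast-circ} to reduce normal-form vectors to polynomials in $[\omega],[W^3],[W^4],[W^5]$, followed by the skew-symmetry formula \eqref{eq:Zhu-skew} and the explicit expansions of $W^i_nW^j$ from Appendix~\ref{App:OPE} to verify that the three commutators vanish. The only remaining work, as you note, is the finite symbolic verification of the cancellation, which is exactly what the paper does (via Risa/Asir).
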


The above lemma implies that $w_2 \mapsto [\omega]$, $w_3 \mapsto [W^3]$,
$w_4 \mapsto [W^4]$, $w_5 \mapsto [W^5]$ define a homomorphism $\tvp$ of
associative algebras from a polynomial algebra $\C[w_2,w_3,w_4,w_5]$ of
four variables $w_2, w_3, w_4, w_5$ onto $A(\tW)$. In particular, $A(\tW)$
is spanned by
\begin{equation}\label{eq:Zhu-element}
[\om]^{\ast p} \ast [W^3]^{\ast q} \ast [W^4]^{\ast r} \ast [W^5]^{\ast s},
\quad p,q,r,s \ge 0,
\end{equation}
where $[u]^{\ast p}$ is a product of $p$ copies of $[u]$ in $A(\tW)$.

We will study linear relations among vectors of normal form
\eqref{eq:normal-form} of small weight. The generating function of the
number of vectors of normal form with respect to weight is
\begin{equation*}
\frac{(1-q)^4(1-q^2)^3(1-q^3)^2(1-q^4)}{\prod_{n \ge 1}(1-q^n)^4}.
\end{equation*}
The sum of its first several terms are
\begin{equation*}
1 + q^2 + 2q^3 + 4q^4 + 6q^5 + 11q^6 + 16q^7 + 29q^8 + 44q^9 + 72q^{10} +
\cdots.
\end{equation*}

We express all vectors of normal form \eqref{eq:normal-form} of weight at
most $10$ as linear combinations of the basis \eqref{eq:V-basis} of
$V(k,0)$. By a direct calculation, we can verify that those vectors of
normal form of weight at most $7$ are all linearly independent. However,
this is not the case if the weight is greater than $7$
\cite[(2.1.9)]{BEHHH}. There are $29$ vectors of normal form of weight $8$,
which span a subspace of dimension $27$. Thus the dimension of the weight
$8$ subspace of $\tW$ is $27$. If we eliminate $(W^3_{-2})^2\1$ and
$W^3_{-1}W^4_{-2}\1$, then the remaining $27$ vectors form a basis of the
weight $8$ subspace of $\tW$. That is, there are two nontrivial linear
relations in the weight $8$ subspace, one involves $(W^3_{-2})^2\1$ and the
other involves $W^3_{-1}W^4_{-2}\1$. Such a nontrivial linear relation is
called a null field \cite{BEHHH, Hornfeck}. Recall that $\tW^{\pm} = \{ v
\in \tW\,|\,\theta v = \pm v\}$. Among $29$ vectors of normal form
\eqref{eq:normal-form} of weight $8$, we see that $17$ are contained in
$\tW^+$ and the remaining $12$ are contained in $\tW^-$. Note that
$(W^3_{-2})^2\1 \in \tW^+$ and $W^3_{-1}W^4_{-2}\1 \in \tW^-$. Thus there
is up to a scalar multiple, a unique nontrivial linear relation in each
weight $8$ subspace of $\tW^+$ and $\tW^-$. Let $\bv^0$ be a nontrivial
linear relation in the weight $8$ subspace of $\tW^+$. It is a linear
combination of the $17$ vectors of normal form of weight $8$ in $\tW^+$. An
explicit form of such a linear combination can be found in Appendix
\ref{App:wt8-linear-relations}. It is obtained by describing each of those
$17$ vectors as a linear combination of the basis \eqref{eq:V-basis}.
Actually, $\bv^0 = 0$ in $V(k,0)$.

We can express the image of each of those $17$ vectors of normal form in
Zhu's algebra $A(\tW)$ as a linear combination of the elements of the form
\eqref{eq:Zhu-element} by using a similar argument as in the proof of Lemma
\ref{lem:Zhu-tW}. Then the image $[\bv^0]$ of $\bv^0$ in $A(\tW)$ becomes a
linear combination of the elements of the form \eqref{eq:Zhu-element}.
Replace $[\omega]$, $[W^3]$, $[W^4]$ and $[W^5]$ with $w_2$, $w_3$, $w_4$
and $w_5$, respectively in $[\bv^0]$. Then we obtain
\begin{align*}
Q_0 =& -8 k^4 (k+2)^2 (3 k+4) (4 k-1) (64 k+107) (k^2+k+1) w_2^2\\
& \quad + 4 k^4 (k+2)^3 (3 k+4) (64 k+107) (80 k^2+30 k+61) w_2^3\\
& \quad -112 k^4 (k+2)^4 (3 k+4) (6 k-5) (64 k+107) w_2^4\\
& \quad + 2 k (16 k+17)^2 (k^2+3 k+5) w_3^2\\
& \quad + k (k+2) (16 k+17)^2 (26 k+83) w_2 w_3^2\\
& \quad + 2 k^2 (k+2) (64 k+107) (8 k^2+9 k-8) w_2 w_4\\
& \quad -4 k^2 (k+2)^2 (36 k+61) (64 k+107) w_2^2 w_4\\
& \quad + 2 (64 k+107) w_4^2 + (16 k+17)^2 w_3 w_5.
\end{align*}

Since $\bv^0 = 0$, the above polynomial lies in the kernel of $\tvp$. One
may discuss the image in Zhu's algebra of a nontrivial linear relation in
the weight $8$ subspace of $\tW^-$. However, the polynomial obtained in
this manner becomes $0$. Thus the null field of weight $8$ in $\tW^-$ gives
no information on $A(\tW)$.

Next, we study null fields of weight $9$. There are $44$ vectors of normal
form \eqref{eq:normal-form} of weight $9$. Among them, $22$ vectors are
contained in $\tW^+$ and the other 22 vectors are contained in $\tW^-$. We
eliminate $W^3_{-3}W^3_{-2}\1$, $W^3_{-2}W^4_{-2}\1$, $W^3_{-1}W^4_{-3}\1$
and $W^3_{-1}W^5_{-2}\1$. Then the remaining $40$ vectors form a basis of
the weight $9$ subspace of $\tW$. We take a nontrivial linear relation
$\bv^1$ in $\tW^-$ which involves $W^3_{-1}W^4_{-3}\1$ (see Appendix
\ref{App:wt9-linear-relations}). We calculate $[\bv^1]$ by a similar method
as above and obtain the following polynomial.
\begin{align*}
Q_1 =& -16 k^3 (k+2) (2 k+1) (13 k^3+24 k^2+7 k+10) w_2 w_3\\
& \quad + 4 k^3 (k+2)^2 (1040 k^3+2232 k^2+1213 k+1116) w_2^2 w_3\\
& \quad -16 k^3 (k+2)^3 (674 k^2+637 k-1100) w_2^3 w_3\\
& \quad + (16 k+17) (64 k+107) w_3^3
 + 2 k (68 k^2+119 k+20) w_3 w_4\\
& \quad -4 k (k+2) (358 k+559) w_2 w_3 w_4
 + 4 k^2 (k+2) (3 k+4) (4 k-1) w_2 w_5\\
& \quad -112 k^2 (k+2)^2 (3 k+4) w_2^2 w_5 + 4 w_4 w_5.
\end{align*}

As before, $\bv^1 = 0$ in $V(k,0)$ and the polynomial $Q_1$ lies in the
kernel of $\tvp$. No further relation in $A(\tW)$ is obtained from the null
fields of weight $9$.

Let $C_2(\tW)$ be the subspace of $\tW$ spanned by the elements $u_{-2}v$
with $u,v \in \tW$. The quotient space $\tW/C_2(\tW)$ has a commutative
Poisson algebra structure \cite[Section 4.4]{Zhu}. Since $(L(-1)v)_n =
-nv_{n-1}$, we have $u_{-m}v \in C_2(\tW)$ for $m \ge 2$. By Lemma
\ref{spanning-set}, we see that $x_2 \mapsto \omega + C_2(\tW)$, $x_i
\mapsto W^i + C_2(\tW)$, $i=3,4,5$ define a homomorphism $\tilde{\rho}$ of
associative algebras from a polynomial algebra $\C[x_2,x_3,x_4,x_5]$ of
four variables $x_2,x_3,x_4,x_5$ onto $\tW/C_2(\tW)$.

We consider the images of some null fields in $\tW/C_2(\tW)$. The
nontrivial linear relation involving $W^3_{-1}W^4_{-2}\1$ is written by
vectors of normal form contained in $C_2(\tW)$ (see Appendix
\ref{App:wt8-linear-relations}). Hence its image in $\tW/C_2(\tW)$ is
trivial. On the other hand, $(W^3_{-2})^2\1$ is a linear combination of
vectors of normal form not all of which lie in $C_2(\tW)$ (see Appendix
\ref{App:wt8-linear-relations}). Therefore, its image in $\tW/C_2(\tW)$ is
a nonzero polynomial in $\omega + C_2(\tW)$, $W^i + C_2(\tW)$, $i=3,4,5$.
Replace $\omega + C_2(\tW)$ and $W^i + C_2(\tW)$ with $x_2$ and $x_i$,
$i=3,4,5$, respectively in the polynomial and multiply it by
\begin{equation*}
(17/9)k(k+1)(16k+17)^2(64k+107).
\end{equation*}
Let $B_0$ be the polynomial obtained in this manner. Then
\begin{align*}
B_0 &= -112 k^4 (k+2)^4 (3 k+4) (6 k-5) (64 k+107) x_2^4\\
& \quad +k (k+2) (16 k+17)^2 (26 k+83) x_2 x_3^2\\
& \quad -4 k^2 (k+2)^2 (36 k+61) (64 k+107) x_2^2 x_4\\
& \quad + 2 (64 k+107) x_4^2 + (16 k+17)^2 x_3 x_5.
\end{align*}

Since $(W^3_{-2})^2\1 \in C_2(\tW)$, $B_0$ lies in the kernel of
$\tilde{\rho}$. Likewise, we consider the images of the four nontrivial
linear relations among the vectors of normal form of weight $9$ in
$\tW/C_2(\tW)$. We obtain only one nonzero polynomial $B_1$ up to a scalar
multiple in this manner, namely,
\begin{align*}
B_1 &= 16 k^3 (k+2)^3 (674 k^2+637 k-1100) x_2^3 x_3
-(16 k+17) (64 k+107) x_3^3 \\
& \quad + 4 k (k+2) (358 k+559) x_2 x_3 x_4
+ 112 k^2 (k+2)^2 (3 k+4) x_2^2 x_5 -4 x_4 x_5.
\end{align*}
In fact, the polynomial $B_1$ comes from the null field $\bv^1$.

The weight $10$ subspace of $\tW^+$ is of dimension $35$, while there are
$40$ vectors of normal form in the subspace. We eliminate
$\omega_{-1}(W^3_{-2})^2\1$, $(W^3_{-3})^2\1$, $(W^4_{-2})^2\1$,
$W^3_{-2}W^5_{-2}\1$ and $W^3_{-1}W^5_{-3}\1$. Then the remaining $35$
vectors form a basis of the weight $10$ subspace of $\tW^+$. For instance,
$(W^3_{-3})^2\1$ can be written uniquely as a linear combination of those
$35$ vectors just as in the case for weight $8$ or $9$. We denote by
$\bv^2$ the nontrivial linear relation in $\tW^+$ involving
$(W^3_{-3})^2\1$.\footnote{We omit the explicit form of $\bv^2$ in Appendix
\ref{App:linear-relations}, for it is very complicated.} Take the image of
$\bv^2$ in $\tW/C_2(\tW)$ and replace $\omega + C_2(\tW)$ and $W^i +
C_2(\tW)$ with $x_2$ and $x_i$, $i=3,4,5$, respectively. Then we obtain a
polynomial in $x_2,x_3,x_4,x_5$. Its suitable constant multiple $B_2$ is as
follows.
\begin{align*}
B_2 &= 16 k^5 (k+2)^5 (6 k-5) (64 k+107)\\
& \qquad \quad \cdot (2734552 k^4+10654776 k^3+10502887 k^2-4070244 k-7670000) x_2^5\\
& \quad -k^2 (k+2)^2 (16 k+17) (29745920 k^5+282149936 k^4\\
& \qquad \quad +715730704 k^3+459700602 k^2-375262083 k-379918040) x_2^2 x_3^2\\
& \quad -20 k^3 (k+2)^3 (64 k+107)\\
& \qquad \quad \cdot (81056 k^4-691736 k^3-2503316 k^2-1005811 k+1451208) x_2^3 x_4\\
& \quad + 17 (k+1) (16 k+17) (64 k+107)^3 x_3^2 x_4\\
& \quad -2 k (k+2) (64 k+107) (979064 k^3+3791032 k^2+4574059 k+1616792) x_2 x_4^2\\
& \quad -k (k+2) (16 k+17)^2 (256632 k^3+825008 k^2+598779 k-114896) x_2 x_3 x_5\\
& \quad -34 (k+1) (16 k+17)^2 (64 k+107) x_5^2.
\end{align*}

We note that $B_0$, $B_1$ and $B_2$ lie in the kernel of $\tilde{\rho}$.
These polynomials will be used in Section \ref{Sect:small-k-case}.

\begin{rmk}\label{Rem:tW-basis}
We eliminate $(W^3_{-2})^2\1$ and $W^3_{-1}W^4_{-2}\1$ from the vectors of
normal form \eqref{eq:normal-form} of weight $8$ and $W^3_{-3}W^3_{-2}\1$,
$W^3_{-2}W^4_{-2}\1$, $W^3_{-1}W^4_{-3}\1$ and $W^3_{-1}W^5_{-2}\1$ from
the vectors of normal form of weight $9$ when we express $W^i_n W^j$, $3
\le i \le j \le 5$, $0 \le n \le i+j-1$ as linear combinations of vectors
of normal form in Appendix \ref{App:OPE}.
\end{rmk}

\section{Unique maximal ideal $\tI$ of $N_0$}\label{Sect:maximal-ideal-tI}
The vertex operator algebra $V(k,0)$ is not simple, for we are assuming
that $k$ is an integer greater than $1$. In fact, it possesses a unique
maximal ideal $\J$, which is generated by a weight $k+1$ vector
$e(-1)^{k+1}\1$ \cite{K}. The quotient algebra $L(k,0) = V(k,0)/\J$ is the
simple vertex operator algebra associated with an affine Lie algebra
$\widehat{sl}_2$ of type $A_1^{(1)}$ with level $k$. Since the Heisenberg
vertex operator algebra $V_{\wh}(k,0)$ is a simple subalgebra of $V(k,0)$,
it follows that $\J \cap V_{\wh}(k,0) = 0$. Thus $V_{\wh}(k,0)$ can be
considered as a subalgebra of $L(k,0)$. Just as in
\eqref{eq:dec-Heisenberg}, we have that $L(k,0)$ is a completely reducible
$V_{\wh}(k,0)$-module and we can write
\begin{equation}
L(k,0) = \oplus_\lambda M_{\wh}(k,\lambda) \otimes K_\lambda,
\end{equation}
where
\begin{equation*}
K_\lambda = \{v \in L(k,0)\,|\, h(m)v = \lambda\delta_{m,0}v \text{ for } m
\ge 0\}.
\end{equation*}

Note that $K_0$ is the commutant of $V_{\wh}(k,0)$ in $L(k,0)$. Similarly,
$\J$ is completely reducible as a $V_{\wh}(k,0)$-module. Hence by
\eqref{eq:dec-Heisenberg},
\begin{equation*}
\J = \oplus_\lambda M_{\wh}(k,\lambda) \otimes (\J \cap N_\lambda).
\end{equation*}
In particular, $\tI = \J \cap N_0$ is an ideal of $N_0$ and $K_0 \cong
N_0/\tI$.

\begin{lem}\label{lem:unique-max-ideal}
$\tI$ is a unique maximal ideal of $N_0$.
\end{lem}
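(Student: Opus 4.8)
The plan is to exploit the decomposition $\J = \oplus_\lambda M_{\wh}(k,\lambda) \otimes (\J \cap N_\lambda)$ together with the fact that $\J$ is the \emph{unique} maximal ideal of $V(k,0)$, and transfer the maximality and uniqueness from the affine side to the commutant side. First I would show that $\tI = \J \cap N_0$ is a proper ideal of $N_0$: properness is immediate since $\1 \notin \J$, and the ideal property was already observed in the excerpt (it is the $\lambda = 0$ component of the $V_{\wh}(k,0)$-module decomposition of $\J$, and $\J$ is an ideal of $V(k,0)$ stable under all the relevant component operators). The substantive point is maximality, i.e.\ that $N_0/\tI \cong K_0$ has no proper nonzero ideals other than those pulled back from $\tI$ — equivalently, that any ideal $I$ of $N_0$ strictly containing $\tI$ must be all of $N_0$.

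The key step is the following: an ideal $I$ of $N_0$ generates, inside $V(k,0)$, an ideal $V_{\wh}(k,0)\cdot I$ (using that $V(k,0)$ is generated by $N_0$ and $V_{\wh}(k,0)$, and that $V_{\wh}(k,0)$ commutes with $N_0$), and conversely an ideal $\mathcal{A}$ of $V(k,0)$ satisfies $\mathcal{A} = V_{\wh}(k,0)\cdot(\mathcal{A}\cap N_0)$ because $\mathcal{A}$, like $V(k,0)$, is completely reducible as a $V_{\wh}(k,0)$-module and hence decomposes as $\oplus_\lambda M_{\wh}(k,\lambda)\otimes(\mathcal{A}\cap N_\lambda)$, with each $N_\lambda$ a cyclic $N_0$-module generated by the lowest-weight vector lying in $\mathcal{A}\cap N_0$-translates. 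More precisely I would argue that if $I \supsetneq \tI$ is an ideal of $N_0$, then $\mathcal{A} := V_{\wh}(k,0)\cdot I$ is an ideal of $V(k,0)$ properly containing $\J$ (properly, since $I/\tI \neq 0$ sits in degree $\lambda = 0$ and is not killed); by the maximality of $\J$ we get $\mathcal{A} = V(k,0)$, whence $\1 \in \mathcal{A}$, and intersecting back with $N_0$ forces $I = N_0$. For uniqueness, suppose $I_1, I_2$ are two maximal ideals; then $I_1 + I_2$ is an ideal, and if it were proper it would contradict maximality, so $I_1 + I_2 = N_0$; one then shows the corresponding affine ideals $V_{\wh}(k,0)\cdot I_j$ both contain $\J$ (since they are proper ideals of $V(k,0)$ strictly larger than... actually they equal $\J$ by maximality of $\J$), forcing $I_j = \tI$, a contradiction with properness unless $I_1 = I_2 = \tI$; but $\tI$ itself is a maximal ideal by the argument above, so it is the unique one.

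The main obstacle I anticipate is making rigorous the correspondence $I \leftrightarrow V_{\wh}(k,0)\cdot I$ between ideals of $N_0$ and ideals of $V(k,0)$ — specifically, checking that $V_{\wh}(k,0)\cdot I$ is genuinely an ideal of $V(k,0)$ (one needs that the component operators $a_n$ for $a \in V(k,0)$ preserve it, which reduces via $a = $ products of $h(-i)\1$ and elements of $N_0$, using the commutation formula \eqref{eq:basic-formula1} and the fact that $V_{\wh}(k,0)$ is itself simple so that $V_{\wh}(k,0)\cdot(V_{\wh}(k,0)\cdot I) = V_{\wh}(k,0)\cdot I$), and that $(V_{\wh}(k,0)\cdot I)\cap N_0 = I$ rather than something larger — this last equality uses the direct-sum decomposition \eqref{eq:dec-Heisenberg} and the fact that the $\lambda = 0$ summand of $V_{\wh}(k,0)\cdot I$ is exactly $V_{\wh}(k,0)^{(0)}\otimes I = \1 \otimes I$. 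Once this dictionary is in place, maximality and uniqueness of $\tI$ follow formally from maximality and uniqueness of $\J$, which is the cited result of \cite{K}.
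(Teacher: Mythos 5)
Your overall strategy --- transferring maximality from $\J$ to $\tI$ via an ideal correspondence --- is in the right spirit, but the correspondence you propose rests on a false premise. You assert that $V(k,0)$ is generated by $N_0$ and $V_{\wh}(k,0)$, and use this to claim that $V_{\wh}(k,0)\cdot I$ is an ideal of $V(k,0)$. This is not true: the subalgebra generated by $V_{\wh}(k,0)$ and $N_0$ is exactly the $\lambda=0$ summand $V_{\wh}(k,0)\otimes N_0$ of the decomposition \eqref{eq:dec-Heisenberg} (it is the kernel of the derivation $h(0)$), which is a proper subalgebra --- for instance $e(-1)\1$ has $h(0)$-eigenvalue $2$ and lies outside it. Consequently $V_{\wh}(k,0)\cdot I$ is not stable under the modes of elements of the sectors $M_{\wh}(k,\lambda)\otimes N_\lambda$ with $\lambda\ne 0$, and your proposed reduction of ``$a_n$ preserves it'' to products of $h(-i)\1$ with elements of $N_0$ does not reach those sectors. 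The object you actually need is $V(k,0)\cdot I$, and then the nontrivial point --- which your proposal does not address --- is that $(V(k,0)\cdot I)\cap N_0 = I$ rather than something larger: a priori, applying a mode of an element of $M_{\wh}(k,\lambda)\otimes N_\lambda$ to $v\in I$ could land in the $\lambda=0$ sector outside $V_{\wh}(k,0)\otimes I$. The paper handles exactly this via skew symmetry: for $u\in M_{\wh}(k,\lambda)\otimes N_\lambda$ and $v$ in an ideal $S$ of $N_0$, the vector $u_nv$ again lies in $M_{\wh}(k,\lambda)\otimes N_\lambda$ because that sector is a module over $V_{\wh}(k,0)\otimes N_0$; hence the $\lambda=0$ component of $V(k,0)\cdot S$ is just $V_{\wh}(k,0)\otimes S$, which does not contain $\1$.

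Two further points. First, your claim that $\mathcal{A}$ ``properly contains $\J$'' when $I\supsetneq\tI$ is unjustified: $\J$ has nonzero components in the $\lambda\neq 0$ sectors, and nothing forces them into $\mathcal{A}$. What one can say is that $\mathcal{A}$ is not \emph{contained} in $\J$, and the argument should be run contrapositively (a proper ideal of $V(k,0)$ is contained in $\J$, so its intersection with $N_0$ is contained in $\tI$). Second, you work harder than necessary for existence and uniqueness of a maximal ideal of $N_0$: since the top level of $N_0$ is $\C\1$ and every ideal is graded, the sum of all proper ideals is proper, so $N_0$ automatically has a unique maximal ideal $S$; the paper then only needs $\tI\subseteq S$ (clear, as $\1\notin\J$) and $S\subseteq\tI$, the latter following from the properness of $V(k,0)\cdot S$ established by the skew-symmetry argument above.
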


\begin{proof}
The top level of the subalgebra $V_{\wh}(k,0) \otimes N_0$ of $V(k,0)$ is
$\C\1$, and so it is the top level of $N_0$ also. Hence $N_0$ has a unique
maximal ideal, say $S$. Since $\J$ does not contain $\1$, we have $\tI
\subset S$. The subspace $V(k,0)\cdot S$ spanned by $u_n v$, $u \in
V(k,0)$, $v \in S$, $n \in \Z$ is an ideal of $V(k,0)$. Let $u \in
M_{\wh}(k,\lambda) \otimes N_\lambda$ and $v \in S$. Since $S \subset
V_{\wh}(k,0) \otimes N_0$ and $M_{\wh}(k,\lambda) \otimes N_\lambda$ is a
$V_{\wh}(k,0) \otimes N_0$-module, the skew symmetry \cite[(3.1.30)]{LL} in
the vertex operator algebra $V(k,0)$ implies that $u_n v$ lies in
$M_{\wh}(k,\lambda) \otimes N_\lambda$. Then we see from
\eqref{eq:dec-Heisenberg} that the intersection of $V(k,0)\cdot S$ and
$V_{\wh}(k,0) \otimes N_0$ is $V_{\wh}(k,0) \otimes S$. Thus $V(k,0)\cdot
S$ is a proper ideal of $V(k,0)$ and so it is contained in the unique
maximal ideal $\J$. Therefore, $\tI = S$ as required.
\end{proof}

The vector $e(-1)^{k+1}\1$ is not contained in $N_0$. For $r \ge 1$ and an
integer $n$, we calculate that
\begin{align*}
h(n)f(0)^r e(-1)^r\1 &= \big( -2r f(0)^{r-1}f(n) + f(0)^rh(n)\big) e(-1)^r\1\\
&= -2r f(0)^{r-1}f(n)e(-1)^r\1 + 2r f(0)^r e(-1)^{r-1} e(n-1)\1\\
& \quad + f(0)^re(-1)^r h(n)\1
\end{align*}
in $V(k,0)$ by using \eqref{eq:affine-commutation}. Moreover,
\begin{equation*}
f(n)e(-1)^r\1 =
\begin{cases}
r(k-r+1)e(-1)^{r-1}\1 & \text{if $n=1$},\\
0 & \text{if $n \ge 2$}.
\end{cases}
\end{equation*}
Hence, $h(n)f(0)^r e(-1)^r\1 = 0$ if $n = 0$ or $n \ge 2$, and
\begin{align*}
h(1)f(0)^r e(-1)^r\1 &= -2r f(0)^{r-1}f(1)e(-1)^r\1\\
&= -2r^2(k - r +1)f(0)^{r-1}e(-1)^{r-1}\1.
\end{align*}

In particular, $h(n)f(0)^{k+1} e(-1)^{k+1}\1 = 0$ for $n \ge 0$. This
proves the next theorem.
\begin{thm}\label{thm:singular-vec}
$f(0)^{k+1}e(-1)^{k+1}\1 \in \tI$.
\end{thm}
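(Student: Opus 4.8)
The plan is to show that the vector $f(0)^{k+1}e(-1)^{k+1}\1$ lies in both the maximal ideal $\J$ of $V(k,0)$ and the commutant $N_0$ of the Heisenberg algebra $V_{\wh}(k,0)$; since $\tI = \J \cap N_0$ by the definition recalled in Section \ref{Sect:maximal-ideal-tI}, the assertion follows immediately.

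Membership in $\J$ is formal. By \cite{K}, $\J$ is generated by $e(-1)^{k+1}\1$, so in particular $e(-1)^{k+1}\1 \in \J$. The operator $f(0)$ is the component operator $(f(-1)\1)_0$ of the vertex operator $Y(f(-1)\1,x) = f(x)$, and $\J$, being an ideal of $V(k,0)$, is stable under every component operator $u_n$ with $u \in V(k,0)$, $n \in \Z$. Applying $f(0)$ a total of $k+1$ times to $e(-1)^{k+1}\1$ thus keeps us inside $\J$, whence $f(0)^{k+1}e(-1)^{k+1}\1 \in \J$.

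The computational heart of the argument is membership in $N_0$, i.e.\ the claim that $h(n)$ annihilates $f(0)^{k+1}e(-1)^{k+1}\1$ for all $n \ge 0$. Using only the commutation relation \eqref{eq:affine-commutation}, the vacuum property $a(n)\1 = 0$ for $n \ge 0$, and the elementary identity $f(n)e(-1)^r\1 = r(k-r+1)e(-1)^{r-1}\1$ for $n = 1$ and $f(n)e(-1)^r\1 = 0$ for $n \ge 2$, one commutes $h(n)$ to the right through the string $f(0)^r e(-1)^r$. The outcome, recorded in the displayed computations preceding the statement, is that $h(n)f(0)^r e(-1)^r\1 = 0$ for $n = 0$ and for $n \ge 2$, together with
\begin{equation*}
h(1)f(0)^r e(-1)^r\1 = -2r^2(k-r+1)f(0)^{r-1}e(-1)^{r-1}\1 .
\end{equation*}
The crucial point is then the specialization $r = k+1$, which makes the factor $k - r + 1$ vanish, so that $h(1)$ kills $f(0)^{k+1}e(-1)^{k+1}\1$ as well. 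Hence $h(n)f(0)^{k+1}e(-1)^{k+1}\1 = 0$ for every $n \ge 0$, placing the vector in $N_0$. Combining this with the previous paragraph gives $f(0)^{k+1}e(-1)^{k+1}\1 \in \J \cap N_0 = \tI$, as required.

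The only genuine obstacle is bookkeeping: carrying out the nested commutations of $h(n)$ past $f(0)^r$ and $e(-1)^r$ without sign or coefficient errors, and isolating the telescoping responsible for the factor $r^2(k-r+1)$. I would record the intermediate identities for $h(n)f(0)^r e(-1)^r\1$ in the form displayed above precisely so that the vanishing at $r = k+1$ can be read off at once; everything else is purely formal.
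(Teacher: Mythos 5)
Your proposal is correct and follows essentially the same route as the paper: the paper proves the theorem by exactly the commutation computation you describe, showing $h(n)f(0)^r e(-1)^r\1 = 0$ for $n=0$ and $n\ge 2$ and that $h(1)$ produces the factor $-2r^2(k-r+1)$, which vanishes at $r=k+1$, so the vector lies in $N_0$. The only difference is that you spell out the (trivial) membership in $\J$ via the stability of the ideal under the component operator $f(0)=(f(-1)\1)_0$, which the paper leaves implicit.
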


It seems natural to expect the following properties of $f(0)^{k+1}
e(-1)^{k+1}\1$ (see Section \ref{Sect:small-k-case}).

\begin{conjecture}\label{Conj:ideal-generator}
$(1)$ The unique maximal ideal $\tI$ of $N_0$ is generated by a weight
$k+1$ vector $f(0)^{k+1}e(-1)^{k+1}\1$.

$(2)$ The automorphism $\theta$ acts as $(-1)^{k+1}$ on
$f(0)^{k+1}e(-1)^{k+1}\1$.
\end{conjecture}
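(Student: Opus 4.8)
The plan is to prove both parts by exploiting the copy of $sl_2$ spanned by the zero modes $e(0),f(0),h(0)$, under which $V(k,0)$ is a locally finite, hence completely reducible, module. Write $g = e(-1)^{k+1}\1$ for the generator of $\J$, and recall from the computation preceding Theorem \ref{thm:singular-vec} that $\bu^0 = f(0)^{k+1}g$. Since $[e(0),e(-1)] = 0$, $e(0)\1 = 0$, and $a(m)g = 0$ for every $a \in \{h,e,f\}$ and $m \ge 1$ (this last being exactly the singular-vector property of $g$), the vector $g$ is a highest weight vector of weight $2(k+1)$ for the zero-mode $sl_2$. Thus the submodule $U$ of $V(k,0)$ generated by $g$ under $e(0),f(0),h(0)$ is the $(2k+3)$-dimensional irreducible module of that highest weight, $\bu^0 = f(0)^{k+1}g$ is its zero-weight (middle) vector, and the standard identity $e^{m}f^{m}v_{+} = \frac{m!\,N!}{(N-m)!}v_{+}$ with $N = 2(k+1)$, $m = k+1$ gives $e(0)^{k+1}\bu^0 = (2k+2)!\,g$.

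For part (1), let $I_0 = N_0 \cdot \bu^0$ be the ideal of $N_0$ generated by $\bu^0$; then $I_0 \subseteq \tI$ since $\bu^0 \in \tI$ by Theorem \ref{thm:singular-vec}. The identity $e(0)^{k+1}\bu^0 = (2k+2)!\,g$ shows that $g$ lies in the $V(k,0)$-ideal generated by $\bu^0$, so that ideal equals $\langle g\rangle = \J$. To descend to $N_0$ I would use the Heisenberg decomposition \eqref{eq:dec-Heisenberg}: $h(0)$ acts as $\lambda$ on $M_{\wh}(k,\lambda)\otimes N_\lambda$ and as $0$ on $\bu^0$, so a mode $u_n\bu^0$ with $u$ of $h(0)$-charge $\lambda$ lands in the $\lambda$-summand. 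Splitting $u = u^0 + u'$ with $u^0 \in \Vh(k,0)\otimes N_0$ and $u'$ in the sum of the summands with $\lambda \ne 0$, the tensor-product mode formula $(a\otimes b)_n(\1\otimes\bu^0) = \sum_{p+q=n-1}(a_p\1)\otimes(b_q\bu^0)$ for $a \in \Vh(k,0)$, $b \in N_0$ puts $(u^0)_n\bu^0$ into $\Vh(k,0)\otimes I_0$, while $(u')_n\bu^0$ sits in the part with $\lambda \ne 0$. Hence the intersection of $\J = \langle\bu^0\rangle$ with $\Vh(k,0)\otimes N_0$ is contained in $\Vh(k,0)\otimes I_0$; but that intersection equals $\Vh(k,0)\otimes\tI$, so $\tI \subseteq I_0$ and therefore $\tI = I_0$.

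For part (2), I would first show that $U$ is $\theta$-stable. Indeed $\theta g = f(-1)^{k+1}\1$, and a weight count shows that the subspace of $V(k,0)$ of conformal weight $k+1$ and $h(0)$-charge $-2(k+1)$ is one-dimensional, spanned by $f(-1)^{k+1}\1$; since the lowest-weight vector $f(0)^{2(k+1)}g$ of $U$ is a nonzero element of this same subspace, it is a multiple of $f(-1)^{k+1}\1$, whence $\theta g \in U$ and so $\theta U = U$. Because $\theta e(0)\theta^{-1} = f(0)$ and $\theta h(0)\theta^{-1} = -h(0)$, the map $\theta$ carries the charge $\mu$ weight space of $U$ to its charge $-\mu$ weight space; each such space is one-dimensional, so $\theta$ preserves the zero-charge line $\C\bu^0$ and $\bu^0$ is a $\theta$-eigenvector. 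To determine the eigenvalue I would compare the coefficient of the basis vector $h(-1)^{k+1}\1$ in $\theta\bu^0 = \pm\bu^0$: a direct induction on $k$ using $[f(0),e(-1)] = -h(-1)$ gives this coefficient as $(-1)^{k+1}(k+1)!$ in $\bu^0$, while the parallel computation for $\theta\bu^0 = e(0)^{k+1}f(-1)^{k+1}\1$ using $[e(0),f(-1)] = h(-1)$ gives $(k+1)!$; equating forces the eigenvalue to be $(-1)^{k+1}$.

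I expect the main obstacle to be the descent step in part (1): verifying rigorously that intersecting the $V(k,0)$-ideal generated by $\bu^0$ with the $\lambda = 0$ Heisenberg component produces nothing beyond the $N_0$-ideal $N_0\cdot\bu^0$. This rests on the charge-grading bookkeeping above together with the fact that $N_0$, being the commutant vertex operator algebra, is closed under its own component operators, so that $b_q\bu^0 \in I_0$ for all $b \in N_0$. By contrast, the $sl_2$ input and the coefficient comparison in part (2) are comparatively routine once the $\theta$-stability of $U$ has been secured.
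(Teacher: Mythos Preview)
The statement you are addressing is labelled Conjecture~\ref{Conj:ideal-generator} in the paper, and the paper does \emph{not} prove it; it is stated as a conjecture and only checked computationally for $k\le 6$ in Section~\ref{Sect:small-k-case}. So there is no proof in the paper to compare against.

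That said, your argument appears to be a correct general proof of both parts.

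For part~(1), the descent step you flagged as the main obstacle in fact goes through. Since $V(k,0)\cdot\bu^0=\spn\{u_n\bu^0:u\in V(k,0),\,n\in\Z\}$ (this span is a submodule by the paper's convention in the introduction, hence an ideal by skew-symmetry), and since $[h(0),u_n]=(h(0)u)_n$, the charge-zero part of $\J=V(k,0)\cdot\bu^0$ is spanned by $u_n\bu^0$ with $u$ in the charge-zero subalgebra. That subalgebra is isomorphic to the tensor product VOA $\Vh(k,0)\otimes N_0$ via $a\otimes b\mapsto a_{-1}b$: the map is a VOA homomorphism because the two factors commute, surjective onto the charge-zero subspace by the Heisenberg decomposition \eqref{eq:dec-Heisenberg}, and injective because $\wh_{-}$ acts freely. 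Your tensor-product mode computation $(a\otimes b)_n(\1\otimes\bu^0)=\sum_{p+q=n-1}a_p\1\otimes b_q\bu^0\in\Vh(k,0)\otimes I_0$ is then legitimate, and comparing with $\J\cap(\Vh(k,0)\otimes N_0)=\Vh(k,0)\otimes\tI$ (stated just before Lemma~\ref{lem:unique-max-ideal}) gives $\tI\subseteq I_0$.

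For part~(2), the structure is right; the coefficient claim is best justified via the PBW filtration by length. The operator $f(0)$ acts on $V(k,0)$ as the derivation $[f(0),\,\cdot\,]$, which preserves that filtration, so one may pass to the associated graded (a polynomial ring) where the derivation sends $e(-1)\mapsto -h(-1)$, $h(-1)\mapsto 2f(-1)$, $f(-1)\mapsto 0$. The only path from $e(-1)^{k+1}$ to $h(-1)^{k+1}$ under $f(0)^{k+1}$ converts each $e(-1)$ once, contributing $(-1)^{k+1}(k+1)!$; since $h(-1)^{k+1}\1$ is a top-degree PBW basis vector this is its honest coefficient in $\bu^0$. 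The parallel computation for $e(0)^{k+1}f(-1)^{k+1}\1$ gives $(k+1)!$, forcing $\theta\bu^0=(-1)^{k+1}\bu^0$.

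In short, you have provided a proof of what the paper leaves open.
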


We have considered the elements $\omega_{\mraff}$, $\omega_\gamma$,
$\omega$, $W^3$, $W^4$ and $W^5$ of $V(k,0)$. For simplicity of notation,
we use the same symbols to denote their images in $L(k,0) = V(k,0)/\J$.
Then $\omega$ is the conformal vector of $K_0$. Moreover, $K_0 = \{ v \in
L(k,0)\,|\, (\omega_\gamma)_0 v = 0\}$ by \cite[Theorem 5.2]{FZ}. It is
also the commutant of $\Vir(\omega_\gamma)$ in $L(k,0)$. The automorphism
$\theta$ of $V(k,0)$ induces an automorphism of $L(k,0)$. We denote it by
the same symbol $\theta$. Let $\W$ be a subalgebra of $K_0$ generated by
$\omega$, $W^3$, $W^4$ and $W^5$. Thus $\W$ is a homomorphic image of
$\tW$. We are interested in $\W$. The following theorem is a direct
consequence of Remark \ref{rmk:W3-generation} and Theorem
\ref{thm:aut-gp}.

\begin{thm}\label{thm:two-properties-W}
If $k \ge 3$, then the vertex operator algebra $\W$ is generated by $W^3$
and its automorphism group $\Aut \W = \langle \theta \rangle$ is of order
$2$.
\end{thm}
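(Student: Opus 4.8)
The plan is to deduce Theorem~\ref{thm:two-properties-W} from the corresponding facts about $\tW$ by pushing everything through the surjective homomorphism $\pi \colon \tW \to \W$ induced by the quotient $V(k,0) \to L(k,0) = V(k,0)/\J$. First, for the generation statement: by Remark~\ref{rmk:W3-generation}, when $k \ge 3$ the algebra $\tW$ is generated by $W^3$, i.e.\ $\tW$ is the smallest subalgebra of $N_0$ containing $W^3$; equivalently, $\tW$ is spanned by iterated component operators $W^3_{n_1}\cdots W^3_{n_m}\1$. Applying the vertex-algebra homomorphism $\pi$, which satisfies $\pi(u_n v) = \pi(u)_n\pi(v)$ and $\pi(\1)=\1$, the images of these elements span $\W = \pi(\tW)$. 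Since $\pi(W^3) = W^3$ (the image in $L(k,0)$, using the convention fixed just before the theorem that we reuse the symbol $W^3$), it follows immediately that $\W$ is generated by $W^3$. In particular $\omega$, $W^4$, $W^5 \in \W$ are already polynomials in the component operators of $W^3$ acting on $\1$, consistent with the relations $W^3_3W^3$, $W^3_1W^3$, $W^3_1W^4$ recorded in Section~\ref{Sect:W2345}.

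Next, for the automorphism group: one inclusion, $\langle\theta\rangle \subseteq \Aut\W$, is given, since $\theta$ acts on $L(k,0)$ preserving $\omega_\gamma$ and hence $K_0$, and it fixes $\omega$, $W^4$ and negates $W^3$, $W^5$, so it restricts to an order-$2$ automorphism of $\W$ (it is nontrivial because $W^3 \ne 0$ in $\W$ for $k \ge 3$, which one should note follows from the degenerate-case analysis being confined to $k=2,3,4$ with $W^3$ surviving precisely for $k\ge 3$). For the reverse inclusion, let $\sigma \in \Aut\W$. Any automorphism of a vertex operator algebra fixes the conformal vector, so $\sigma\omega = \omega$, and hence $\sigma$ preserves each weight space $\W_{(i)}$. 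The weight-$3$ space $\W_{(3)}$ consists of Virasoro primary vectors (it is the image of $(N_0)_{(3)}$, and the unique primary $W^3$ maps to something primary; degenerate $k=3$ aside, $W^3 \ne 0$), so $\sigma W^3 = \xi W^3$ for a scalar $\xi \ne 0$, exactly as in the proof of Theorem~\ref{thm:aut-gp}. Then apply $\sigma$ to the identity $W^3_5 W^3 = 12k^3(k-2)(k-1)(3k+4)\1$: since $\sigma$ fixes $\1$, we get $\xi^2 = 1$, so $\sigma W^3 = \pm W^3$, i.e.\ $\sigma \in \{1,\theta\}$ restricted appropriately; as $\W$ is generated by $W^3$, $\sigma$ is determined by $\sigma W^3$, giving $\Aut\W = \langle\theta\rangle$.

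The only genuine subtlety — and the step I would be most careful about — is verifying that the structural inputs survive the quotient when $k \ge 3$: namely that $W^3$ is still nonzero in $\W$, that $\W_{(3)}$ is still one-dimensional spanned by a primary vector, and that the scalar $12k^3(k-2)(k-1)(3k+4)$ multiplying $\1$ in $W^3_5W^3$ is nonzero for every integer $k \ge 3$ (it is, since each factor is positive for $k \ge 3$). The first two points are where the hypothesis $k \ge 3$ really bites: for $k = 2$ one has $c = 0$ and the theory collapses, and the degenerate cases $k = 2,3,4$ are where $W^{k+1}$ enters the maximal ideal, so one must confirm that for $k \ge 3$ specifically $W^3$ is not in $\J \cap N_0 = \tI$. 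This is guaranteed because $\bu^0 = f(0)^{k+1}e(-1)^{k+1}\1$ has weight $k+1 \ge 4 > 3$, so $\tI$ contains no nonzero weight-$3$ vector, hence $\pi$ is injective on $(N_0)_{(3)}$ and $W^3 \ne 0$ in $\W$. Apart from this bookkeeping, the proof is the immediate transport of Remark~\ref{rmk:W3-generation} and Theorem~\ref{thm:aut-gp} along the surjection $\pi$, which is why the authors state it as a ``direct consequence.''
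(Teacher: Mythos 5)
Your argument is correct and is exactly the route the paper intends: the theorem is stated there as a direct consequence of Remark \ref{rmk:W3-generation} and Theorem \ref{thm:aut-gp}, transported along the surjection $\tW \to \W$, and your bookkeeping (nonvanishing of $W^3$ in $\W$, one-dimensionality of the space of weight-$3$ primaries, nonvanishing of $12k^3(k-2)(k-1)(3k+4)$ for $k\ge 3$) supplies the details the paper leaves implicit. Two small slips in your asides: for $k=2$ the central charge is $1/2$, not $0$; and the cleanest reason $W^3\neq 0$ in $\W$ is that $W^3_5W^3$ is a nonzero multiple of $\1$ in $L(k,0)$ (or that $\J$ is generated by the weight-$(k+1)$ singular vector $e(-1)^{k+1}\1$ and hence contains nothing of weight $\le k$), rather than an appeal to $\bu^0$, whose generation of $\tI$ is only conjectural (Conjecture \ref{Conj:ideal-generator}).
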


\section{Irreducible modules for $K_0$}\label{Sect:VOA-W}
In the preceding section, $K_0$ is defined to be the commutant of
$V_{\wh}(k,0)$ in $L(k,0)$. We will follow the argument in \cite{DL} to
realize $K_0$ in a vertex operator algebra associated with a lattice and
construct some of its irreducible modules. Those irreducible $K_0$-modules
will be denoted by $M^{i,j}$, $0 \le i \le k$, $0 \le j \le k-1$. Actually,
$M^{0,0} = K_0$.

Let $L = \Z\al_1 + \cdots + \Z\al_k$ with $\la \al_p,\al_q\ra = 2\dl_{pq}$.
Thus $L$ is an orthogonal sum of $k$ copies of a root lattice of type
$A_1$. Its dual lattice is $L^\perp = \frac{1}{2}L$. The commutator map
$c_0(\,\cdot\, ,\,\cdot\,)$ defined in \cite[Remark 6.4.12]{LL} is trivial
on $L^\perp$ so that the twisted group algebras $\C\{L\}$ and
$\C\{L^\perp\}$ considered in \cite[Section 6.4]{LL} are isomorphic to
ordinary group algebras $\C[L]$ and $\C[L^\perp]$ of additive groups $L$
and $L^\perp$, respectively. Denote a standard basis of $\C[L^\perp]$ by
$e^\alpha$, $\alpha \in L^\perp$ with multiplication $e^\alpha e^\beta =
e^{\alpha+\beta}$. We consider the vertex operator algebra $V_L = M(1)
\otimes \C[L]$ associated with the lattice $L$ and its module $V_{L^\perp}
= M(1) \otimes \C[L^\perp]$. Here, $M(1) = [\alpha_p(-n); 1 \le p \le k, n
\ge 1]$ as a vector space. The vertex operator algebra $V_L$ is spanned by
$\be_1(-n_1)\cdots\be_r(-n_r)e^\al$, $\be_i \in \{\al_1,\ldots,\al_k\}$,
$\al \in L$, $n_1 \ge \cdots \ge n_r \ge 1$. Let $\gm = \al_1 + \cdots +
\al_k$. Thus $\la \gm,\gm \ra = 2k$. Set
\begin{equation*}
H = \gamma(-1)\1, \quad E = e^{\al_1} + \cdots + e^{\al_k}, \quad F =
e^{-\al_1} + \cdots + e^{-\al_k},
\end{equation*}
where $\1 = 1 \otimes 1$ is the vacuum vector.

Then $H_0 H = 0$, $H_1 H = 2k\1$, $H_0 E = 2E$, $H_1 E = 0$, $H_0 F = -2F$,
$H_1 F = 0$, $E_0 F = H$, $E_1 F = k\1$, $E_0 E = E_1 E = F_0 F = F_1 F =
0$ and $A_n B = 0$ for $A, B \in \{ H, E, F\}$, $n \ge 2$ in the vertex
operator algebra $V_L$. Therefore, the component operators $H_n$, $E_n$ and
$F_n$, $n \in \Z$, which are endomorphisms of $V_L$ or $V_{L^\perp}$, give
a representation of $\widehat{sl}_2$ with level $k$ under the
correspondence
\begin{equation}\label{eq:HhEeFf}
H_n \leftrightarrow h(n),\qquad E_n  \leftrightarrow e(n),\qquad F_n
\leftrightarrow f(n).
\end{equation}

We have
\begin{equation*}
(E_{-1})^j\1 = j\,! \sum_{\substack{I \subset \{1,2,\ldots,k\}\\|I|=j}}
e^{\al_I}, \qquad (F_{-1})^j\1 = j\,! \sum_{\substack{I \subset
\{1,2,\ldots,k\}\\|I|=j}} e^{-\al_I},
\end{equation*}
and $(E_{-1})^{k+1}\1 = (F_{-1})^{k+1}\1 = 0$, where $\al_I = \sum_{p \in
I} \al_p$ for a subset $I$ of $\{1,2,\ldots,k\}$. In particular,
$(E_{-1})^k \1 = k \,! e^\gm$ and $(F_{-1})^k \1 = k \,! e^{-\gm}$.
Moreover,
\begin{equation*}
e^{-\gm}\cdot (E_{-1})^j\1 = \frac{j\,!}{(k-j)\,!} (F_{-1})^{k-j}\1,
\qquad e^{\gm}\cdot (F_{-1})^j\1 = \frac{j\,!}{(k-j)\,!}
(E_{-1})^{k-j}\1
\end{equation*}
in the group algebra $\C[L]$.

Let $\Vaff$ (resp. $V^\gamma$) be the subalgebra of $V_L$ generated by $H$,
$E$ and $F$ (resp. $e^\gm$ and $e^{-\gm}$). Then $V_L \supset \Vaff \supset
V^\gm$ with $\Vaff \cong L(k,0)$ and $V^\gm \cong V_{\Z\gm}$, where
$V_{\Z\gm}$ is the vertex operator algebra associated with a rank one
lattice $\Z\gamma$. Note that $V^\gm$ contains $(e^\gm)_{2k-2}e^{-\gm} =
H$. The $-1$ isometry of the lattice $L^\perp$ lifts to a linear
isomorphism $\theta$ of $V_{L^\perp}$ onto itself of order $2$. Such a lift
is unique, since the commutator map $c_0(\,\cdot\, ,\,\cdot\,)$ is trivial.
Its restriction to $V_L$ is an automorphism of the vertex operator algebra
$V_L$. We have $\theta H = -H$, $\theta E = F$ and $\theta F = E$. Thus the
restriction of the automorphism $\theta$ here to $\Vaff$ agrees with the
automorphism of $L(k,0) = V(k,0)/\J$ induced by the automorphism $\theta$
of $V(k,0)$ discussed in Section 2.

For simplicity of notation, we use the same symbols $\omega_{\mraff}$,
$\omega_\gamma$, $\omega$, $W^3$, $W^4$ and $W^5$ to denote their images in
$\Vaff \cong L(k,0)$ under the correspondence \eqref{eq:HhEeFf}. Then
\begin{align*}
\omega_{\mraff} &= \frac{1}{2(k+2)} \Big( \frac{1}{2}H_{-1}H +
E_{-1}F + F_{-1}E \Big),\\
\omega_{\gamma} &= \frac{1}{4k} H_{-1}H,\\
\omega &= \frac{1}{2k(k+2)} \Big( -H_{-1}H + k(E_{-1}F + F_{-1}E)
\Big),
\end{align*}
\begin{equation*}
\begin{split}
W^3 &= k^2 H_{-3}\1 + 3 k H_{-2}H +
2H_{-1}H_{-1}H - 6k H_{-1}E_{-1}F \\
& \quad + 3 k^2(E_{-2}F - E_{-1}F_{-2}\1).
\end{split}
\end{equation*}

Let $L' = \oplus_{p=1}^{k-1} \Z(\al_p - \al_{p+1})$, which is $\sqrt{2}$
times a root lattice of type $A_{k-1}$. We have
\begin{equation}\label{eq:L-prime}
L' = \{ \alpha \in L\,|\, \la \alpha,\gamma\ra = 0 \}.
\end{equation}
Moreover,
\begin{equation}\label{eq:L'-cosets-L}
L = \bigcup_{i=0}^{k-1} \, (i\al_1 + L' \oplus \Z\gamma)
\end{equation}
is a coset decomposition of $L$ by $L' \oplus \Z\gamma$ \cite[(4.1)]{LY}.

Let $V_{L'} = M(1)' \otimes \C[L']$ be the vertex operator algebra
associated with $L'$, where $M(1)' = [(\alpha_p-\alpha_q)(-n); 1 \le p < q
\le k, n \ge 1]$ as a vector space and $\C[L']$ is an ordinary group
algebra of the additive group $L'$. Then $V_{L'} \cong V_{\sqrt{2}A_{k-1}}$
is a subalgebra of $V_L$. By \eqref{eq:L-prime} and \eqref{eq:L'-cosets-L},
the following proposition holds \cite{LY} (see \cite[Theorem 5.2]{FZ}
also).

\begin{prop}\label{prop:VL-prime}
We have $V_{L'} = \{ v \in V_L \,|\, (\omega_\gamma)_n v = 0 \text{ for } n
\ge 0\}$. In particular, $K_0 \cong \Vaff \cap V_{L'}$.
\end{prop}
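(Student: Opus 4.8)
The plan is to establish the two assertions of Proposition~\ref{prop:VL-prime} by directly comparing the defining conditions of $V_{L'}$ and of the commutant, using the coset structure of $L$. First I would describe $V_L = M(1)\otimes\C[L]$ as a direct sum $\bigoplus_{\alpha\in L} M(1)\otimes e^\alpha$, and recall that the operator $(\omega_\gamma)_0$ coming from $\omega_\gamma = \frac{1}{4k}H_{-1}H$ is, up to the factor $\frac{1}{2k}$, the zero-mode $\gamma(0)$ of the Heisenberg field $\gamma(x)$; on the summand $M(1)\otimes e^\alpha$ it acts as the scalar $\frac{1}{2k}\langle\gamma,\alpha\rangle$. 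Thus $(\omega_\gamma)_0 v=0$ for $v$ homogeneous of the form $\beta_1(-n_1)\cdots\beta_r(-n_r)e^\alpha$ exactly when $\langle\gamma,\alpha\rangle=0$, i.e. when $\alpha\in L'$ by \eqref{eq:L-prime}. Similarly, $(\omega_\gamma)_n$ for $n\ge 1$ lowers weight and acts within each summand $M(1)\otimes e^\alpha$ through the Heisenberg modes $\gamma(n)$; an element lies in the kernel of all these if and only if its $M(1)$-part lies in the subspace $M(1)'$ built only from the modes $(\alpha_p-\alpha_q)(-n)$, which are precisely the Heisenberg modes orthogonal to $\gamma$. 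Combining these two observations gives $\{v\in V_L : (\omega_\gamma)_n v=0\ \text{for}\ n\ge 0\} = M(1)'\otimes\C[L'] = V_{L'}$, which is the first assertion.

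The cleaner route, which I would actually use, is to invoke the already-cited result \cite[Theorem 5.2]{FZ}: if $\Vir(\omega_\gamma)$ denotes the Heisenberg (rank one) vertex operator subalgebra generated by $\omega_\gamma$ inside $V_L$, then its commutant is exactly $\{v\in V_L : (\omega_\gamma)_n v=0\ \text{for}\ n\ge 0\}$. So the content of the first statement is simply the identification of this commutant with $V_{L'}$. Here I would note that the Heisenberg subalgebra generated by $\omega_\gamma$ is $M(1)_\gamma := [\gamma(-n); n\ge 1]$, a rank one Heisenberg vertex operator algebra sitting inside $M(1)\subset V_L$, and that $M(1)$ factorizes as a tensor product $M(1)_\gamma\otimes M(1)'$ corresponding to the orthogonal decomposition of the rank $k$ space $\C\alpha_1\oplus\cdots\oplus\C\alpha_k$ into $\C\gamma$ and its orthogonal complement, which is spanned by the $\alpha_p-\alpha_{p+1}$. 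Under this tensor decomposition, together with the coset decomposition \eqref{eq:L'-cosets-L} of $L$ by $L'\oplus\Z\gamma$, the commutant of $M(1)_\gamma\otimes\C[\Z\gamma] = V_{\Z\gamma}$ in $V_L$ is $M(1)'\otimes\C[L'] = V_{L'}$; here one uses that a coset $i\alpha_1+L'\oplus\Z\gamma$ with $1\le i\le k-1$ contributes vectors on which $\gamma(0)$ acts by the nonzero scalar $\frac{1}{2k}\langle\gamma, i\alpha_1\rangle = i\ne 0$ (as $0<i<k$), hence such vectors cannot lie in the commutant. This proves $V_{L'} = \{v\in V_L : (\omega_\gamma)_n v=0\ \text{for}\ n\ge 0\}$.

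For the second assertion, $K_0\cong \Vaff\cap V_{L'}$, I would argue as follows. We have the inclusions $V_L\supset\Vaff\supset V^\gamma$, and $\Vaff\cong L(k,0)$ under the correspondence \eqref{eq:HhEeFf}, which also sends $\omega_\gamma$ to the image of $\omega_\gamma$ described in Section~\ref{Sect:maximal-ideal-tI}. By definition, $K_0$ is the commutant of $V_{\wh}(k,0)$ in $L(k,0)$, equivalently (by \cite[Theorem 5.2]{FZ} again, as recalled in the excerpt) $K_0 = \{v\in L(k,0) : (\omega_\gamma)_n v=0\ \text{for}\ n\ge 0\}$ — note that in $L(k,0)$ the vanishing of $(\omega_\gamma)_0$ already forces the vanishing of $(\omega_\gamma)_n$ for $n\ge 1$ since $L(k,0)$ is a completely reducible $V_{\wh}(k,0)$-module, but including all $n\ge 0$ does no harm. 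Transporting this across the isomorphism $\Vaff\cong L(k,0)$, we get $K_0\cong \{v\in\Vaff : (\omega_\gamma)_n v=0\ \text{for}\ n\ge 0\}$, and since $\Vaff\subset V_L$, this intersection with $V_L$'s commutant description gives $K_0\cong\Vaff\cap\{v\in V_L : (\omega_\gamma)_n v=0\ \text{for}\ n\ge 0\} = \Vaff\cap V_{L'}$ by the first part.

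The step I expect to be the main obstacle is the careful bookkeeping in the first part: one must verify that the Heisenberg zero-mode $(\omega_\gamma)_0$ (proportional to $\gamma(0)$) genuinely separates the $L'$-cosets — this is where \eqref{eq:L-prime} and the coset decomposition \eqref{eq:L'-cosets-L} enter — and, simultaneously, that requiring annihilation by the positive modes $(\omega_\gamma)_n$, $n\ge 1$, cuts the Fock space $M(1)$ down to exactly $M(1)'$ rather than something larger. Both points are standard for lattice vertex operator algebras and rank one Heisenberg commutants, and are essentially \cite[Theorem 5.2]{FZ} applied in this setting, so in the write-up I would lean on that citation and keep the coset/orthogonality argument brief. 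Everything else is a transport of structure along the isomorphism $\Vaff\cong L(k,0)$ and poses no difficulty.
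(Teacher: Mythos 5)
Your overall strategy coincides with the paper's: the paper offers no argument beyond citing \eqref{eq:L-prime}, \eqref{eq:L'-cosets-L}, \cite{LY} and \cite[Theorem 5.2]{FZ}, and your ``cleaner route'' (the tensor factorization $M(1)\cong M(1)_\gamma\otimes M(1)'$ corresponding to $\C\gamma\oplus(\C\gamma)^{\perp}$, the coset decomposition \eqref{eq:L'-cosets-L} of $L$ over $L'\oplus\Z\gamma$, and \cite[Theorem 5.2]{FZ} to identify the commutant with the simultaneous kernel of the $(\omega_\gamma)_n$, $n\ge 0$) is exactly the intended proof and is sound; the transport to $K_0\cong\Vaff\cap V_{L'}$ in your last paragraph likewise matches how $K_0$ is characterized in Section \ref{Sect:maximal-ideal-tI}.

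However, the direct argument in your first paragraph contains a genuine error: $(\omega_\gamma)_0$ is \emph{not} a multiple of $\gamma(0)$. Writing $Y(\omega_\gamma,x)=\sum_m L^\gamma(m)x^{-m-2}$, one has $(\omega_\gamma)_0=L^\gamma(-1)$, the degree-raising translation operator of the rank one Heisenberg subalgebra; on $e^\alpha$ it gives $\frac{\langle\gamma,\alpha\rangle}{2k}\gamma(-1)e^\alpha$, not a scalar multiple of $e^\alpha$, and the claim that $(\omega_\gamma)_0 v=0$ for $v=\beta_1(-n_1)\cdots\beta_r(-n_r)e^\alpha$ holds ``exactly when $\langle\gamma,\alpha\rangle=0$'' is false: for instance $(\omega_\gamma)_0\,\gamma(-1)e^\alpha=\gamma(-2)e^\alpha\ne 0$ even when $\alpha\in L'$ (correctly so, since $\gamma(-1)e^\alpha\notin V_{L'}$). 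The mode that acts by the scalar you have in mind is $(\omega_\gamma)_1=L^\gamma(0)$, with eigenvalue $\langle\gamma,\alpha\rangle^2/4k$ on $M(1)'\otimes e^\alpha$; it is this mode, together with the higher ones (or \cite[Theorem 5.2]{FZ} applied to $(\omega_\gamma)_0$ directly), that both separates the cosets and cuts $M(1)$ down to $M(1)'$. There is also a harmless scalar slip in your second paragraph: $\gamma(0)$ acts on $e^\beta$ as $\langle\gamma,\beta\rangle$, which on the coset $i\alpha_1+L'\oplus\Z\gamma$ is $2i+2km$, nonzero for $1\le i\le k-1$, rather than as $\tfrac{1}{2k}\langle\gamma,i\alpha_1\rangle=i$. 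Since you state you would actually use the second route, the proof is acceptable once the first paragraph is corrected or dropped.
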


We will describe $\omega$ and $W^3$ explicitly as elements of $V_{L'}$ (see
\cite[Lemma 4.1]{LY}). In the vertex operator algebra $V_L$ we have
\begin{equation*}
H_{-1}H = \sum_{1 \le p \le k} \al_p(-1)^2\1 + \sum_{\substack{1 \le p, q
\le k\\ p \ne q}} \al_p(-1)\al_q(-1)\1,
\end{equation*}
\begin{equation*}
E_{-1}F + F_{-1}E = \sum_{1 \le p \le k} \al_p(-1)^2\1 + 2
\sum_{\substack{1 \le p, q \le k\\ p \ne q}} e^{\al_p-\al_q}.
\end{equation*}

Now, $(\al_p - \al_q)(-1)^2 = \al_p(-1)^2 + \al_q(-1)^2 - 2
\al_p(-1)\al_q(-1)$ and so
\begin{equation*}
\frac{1}{2} \sum_{\substack{1 \le p, q \le k\\ p \ne q}} (\al_p -
\al_q)(-1)^2 = (k - 1) \sum_{1 \le p \le k} \al_p(-1)^2 -
\sum_{\substack{1 \le p, q \le k\\ p \ne q}} \al_p(-1)\al_q(-1).
\end{equation*}
Hence we obtain (see \cite[Lemma 4.1]{LY})
\begin{equation}\label{eq:omega-in-VL'}
\omega = \frac{1}{4k(k+2)} \sum_{\substack{1 \le p, q \le k\\ p \ne q}}
(\al_p - \al_q)(-1)^2\1 + \frac{1}{k+2} \sum_{\substack{1 \le p, q \le k\\
p \ne q}} e^{\al_p-\al_q}.
\end{equation}

Moreover, we calculate that
\begin{equation*}
H_{-3}\1 = \sum_{1 \le p \le k} \al_p(-3)\1,
\end{equation*}
\begin{equation*}
H_{-2}H = \sum_{1 \le p, q \le k} \al_p(-2)\al_q(-1)\1,
\end{equation*}
\begin{equation*}
H_{-1}H_{-1}H = \sum_{1 \le p, q, r \le k} \al_p(-1)\al_q(-1)\al_r(-1)\1,
\end{equation*}
\begin{equation*}
H_{-1}E_{-1}F = \Big( \sum_{1 \le p \le k} \al_p(-1) \Big) \Big( \sum_{1
\le q \le k} \frac{1}{2}\big( \al_q(-2)\1 + \al_q(-1)^2\1 \big) +
\sum_{\substack{1 \le q, r \le k\\ q \ne r}} e^{\al_q-\al_r} \Big),
\end{equation*}
\begin{equation*}
\begin{split}
E_{-2}F - E_{-1}F_{-2}\1 &= \frac{1}{3} \sum_{1 \le p \le k} (
-\al_p(-3)\1 + \al_p(-1)^3\1)\\
& \quad + \sum_{\substack{1 \le p, q \le k\\ p \ne q}} \big(\al_p(-1) +
\al_q(-1)\big) e^{\al_p-\al_q}.
\end{split}
\end{equation*}

Using these equations, we have
\begin{equation}\label{eq:W3-in-VL'}
\begin{split}
W^3 &= \sum_{1 \le p, q, r \le k} (\al_p - \al_q)(-1)^2 (\al_p -
\al_r)(-1)\1\\
& \quad - 3k \sum_{\substack{1 \le q, r \le k\\ q \ne r}} \Big(
\sum_{\substack{1 \le p \le k\\ p \ne q}} (\al_p - \al_q)(-1) +
\sum_{\substack{1 \le p \le k\\ p \ne r}} (\al_p - \al_r)(-1) \Big)
e^{\al_q-\al_r}.
\end{split}
\end{equation}

Equation \eqref{eq:W3-in-VL'} in particular implies that $W^3 = 0$ in
$V_{L'}$ if $k = 2$ (see Section \ref{subsec:k-is-2}). We identify $\Vaff$
with $L(k,0)$ and $V^\gamma$ with $V_{\Z\gamma}$ from now on.

It is well known that the vertex operator algebra associated with a
positive definite even lattice is rational \cite{Dong}. We study a
decomposition of $\Vaff$ into a direct sum of irreducible modules for
$V^\gm = V_{\Z\gm}$. Any irreducible module for $V_{\Z\gm}$ is isomorphic
to one of $V_{\Z\gm + n\gm/2k}$, $0 \le n \le 2k-1$ \cite{Dong}. Let $V^\gm
\cdot v$ be the $V^\gm$-submodule of $V_{L^\perp}$ generated by an element
$v$ of $V_{L^\perp}$. Then $V^\gm \cdot (E_{-1})^j\1$ (resp. $V^\gm \cdot
(F_{-1})^j\1$) is isomorphic to $V_{\Z\gm + j\gm/k}$ (resp. $V_{\Z\gm -
j\gm/k}$). Now, the action of $H_0 = \gamma(0)$ on $e^{n\gm/2k}$ is given
by $H_0 e^{n\gm/2k} = n e^{n\gm/2k}$. Moreover, the eigenvalues of $H_0$ on
$\Vaff$ are even integers, since $h(0)u = 2(q-r)u$ for a vector $u$ of the
form \eqref{eq:V-basis} in $V(k,0)$. Hence $V_{\Z\gm + n\gm/2k}$ does not
appear as a direct summand in $\Vaff$ unless $n$ is even. Let
\begin{equation*}
M^{0,j} = \{v \in \Vaff\,|\, H_{m} v = -2j\delta_{m,0}v \text{ for } m\ge
0\}
\end{equation*}
for $0 \le j \le k-1$. Then $\Vaff = \oplus_{j=0}^{k-1} (V^\gm \cdot
(F_{-1})^j\1) \otimes M^{0,j}$ as $V^\gm$-modules. That is, the following
lemma holds.
\begin{lem}\label{lem:dec-0}
$L(k,0) = \oplus_{j=0}^{k-1} V_{\Z\gm - j\gm/k} \otimes M^{0,j}$
as $V_{\Z\gm}$-modules.
\end{lem}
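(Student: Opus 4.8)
The plan is to establish Lemma \ref{lem:dec-0} as a direct consequence of the representation theory of the rank-one lattice vertex operator algebra $V_{\Z\gm}$ together with the constraints coming from the $\widehat{sl}_2$-action. First I would recall that $V_{\Z\gm}$ is rational and that, by \cite{Dong}, its inequivalent irreducible modules are precisely $V_{\Z\gm + n\gm/2k}$ for $0 \le n \le 2k-1$, since $\la\gm,\gm\ra = 2k$ and the discriminant group of $\Z\gm$ has order $2k$. Because $\Vaff$ is a module for $V_{\Z\gm}$ (indeed $V^\gm \subset \Vaff$ as a subalgebra) and $V_{\Z\gm}$ is rational, $\Vaff$ decomposes as a direct sum of these irreducible modules, each appearing with some multiplicity space. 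The multiplicity space of $V_{\Z\gm + n\gm/2k}$ in $\Vaff$ is exactly the space of lowest-weight vectors for $V_{\Z\gm}$ inside $\Vaff$ on which $H_0 = \gm(0)$ acts by the scalar $n$.

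Next I would pin down which values of $n$ actually occur and identify the corresponding multiplicity spaces. The key numerical input is that $H_0 e^{n\gm/2k} = n\, e^{n\gm/2k}$, so $V_{\Z\gm + n\gm/2k}$ can only appear if $n$ is an eigenvalue of $H_0$ on $\Vaff$; but under the identification $\Vaff \cong L(k,0)$ with $H_n \leftrightarrow h(n)$, the operator $h(0)$ acts on the spanning vectors \eqref{eq:V-basis} by $2(q-r)$, hence only even integers occur. Thus only $n = -2j$ with $j$ an integer can contribute, and since the modules $V_{\Z\gm + n\gm/2k}$ for $n$ ranging over a set of residues mod $2k$ are already distinct, it suffices to let $j$ run over $0, 1, \ldots, k-1$. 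For each such $j$, the multiplicity space is by definition $M^{0,j} = \{v \in \Vaff \mid H_m v = -2j\dl_{m,0} v \text{ for } m \ge 0\}$, the space of $\wh_\ast$-lowest-weight vectors of charge $-2j$. Finally I would note that $V^\gm \cdot (F_{-1})^j\1 \cong V_{\Z\gm - j\gm/k} = V_{\Z\gm + (-2j)\gm/2k}$, which is already recorded in the excerpt (from $(F_{-1})^j\1$ having $H_0$-eigenvalue $-2j$ and generating a $V^\gm$-submodule isomorphic to the appropriate lattice coset module), so the decomposition reads $\Vaff = \oplus_{j=0}^{k-1} (V^\gm\cdot(F_{-1})^j\1)\otimes M^{0,j}$, and translating through $V^\gm \cong V_{\Z\gm}$ gives the statement.

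The main obstacle, and the step deserving the most care, is verifying that \emph{every} irreducible $V_{\Z\gm}$-summand of $\Vaff$ is of the stated form and with the stated multiplicity space — i.e.\ that no module $V_{\Z\gm + n\gm/2k}$ with $n$ odd appears and that the multiplicity of $V_{\Z\gm - j\gm/k}$ is genuinely all of $M^{0,j}$ rather than a proper subspace. The parity argument handles the first point cleanly. For the second point one should observe that $\Vaff$, being $\Z$-graded by $H_0$ into eigenspaces with even eigenvalues, splits as $\oplus_j \Vaff^{[-2j]}$ where $\Vaff^{[-2j]}$ is the $(-2j)$-eigenspace of $H_0$, and that each $\Vaff^{[-2j]}$ is a direct sum of copies of $V_{\Z\gm - j\gm/k}$ whose multiplicity space is precisely the set of $\wh_\ast$-lowest-weight vectors in $\Vaff^{[-2j]}$, namely $M^{0,j}$; this is the usual ``commutant tensor decomposition'' for a Heisenberg subalgebra, exactly as in \eqref{eq:dec-Heisenberg}, now applied inside $L(k,0)$ and with the Heisenberg-module factor repackaged as a lattice $V_{\Z\gm}$-module. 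One should also briefly justify that $j$ need only range up to $k-1$: for $|n| \ge 2k$ the module $V_{\Z\gm + n\gm/2k}$ coincides with one already indexed by a residue in $[0,2k)$, but the $H_0$-eigenvalues appearing in a fixed irreducible $V_{\Z\gm}$-module all lie in a single residue class mod $2k$, and combining this with completeness of the indexing over $0 \le j \le k-1$ closes the argument.
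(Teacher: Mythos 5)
Your proposal is correct and follows essentially the same route as the paper: rationality of $V_{\Z\gm}$ and the classification of its irreducible modules from \cite{Dong}, the parity constraint on the $H_0$-eigenvalues of $\Vaff$ coming from $h(0)u = 2(q-r)u$, and the identification of the multiplicity space of $V_{\Z\gm - j\gm/k}$ with the charge $-2j$ Heisenberg highest weight space $M^{0,j}$. Your extra care in checking that the multiplicity space is all of $M^{0,j}$ (via the Heisenberg decomposition as in \eqref{eq:dec-Heisenberg} and the observation that the $H_0$-eigenvalues within a fixed irreducible $V_{\Z\gm}$-module lie in a single residue class mod $2k$) is a sound elaboration of what the paper leaves implicit.
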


In the case $j=0$, $M^{0,0}$ coincides with the commutant $K_0$ of
$V_{\wh}(k,0)$ in $L(k,0)$. The restriction of $\theta$ to $M^{0,0}$, which
we denote by the same symbol $\theta$, is an automorphism of $M^{0,0}$ of
order $2$.

In order to describe irreducible $M^{0,0}$-modules contained in
$V_{L^\perp}$, let
\begin{equation*}
v^i = \sum_{\substack{I \subset \{1,2,\ldots, k\}\\|I|=i}} e^{\al_I/2},
\qquad v^{i,j} = \frac{1}{j\,!}(F_0)^j v^i
\end{equation*}
for $0 \le i \le k$, $0 \le j \le i$. Note that $v^0$ is the vacuum vector
$\1$ and $v^{i,0} = v^i$. In fact\footnote{The corresponding equation on
page 31 of \cite{DLY2} is incorrect.},
\begin{equation*}
v^{i,j} = \sum_{\substack{I \subset \{1,2,\ldots, k\}\\|I|=i}}
\sum_{\substack{J \subset I\\
|J| = j}} e^{\al_{I-J}/2 - \al_J/2}.
\end{equation*}
From this explicit form of $v^{i,j}$, we see that $\theta v^{i,j} =
v^{i,i-j}$, $0 \le i \le k$, $0 \le j \le i$.

We have $H_0 v^{i,j} = (i-2j)v^{i,j}$, $E_0 v^{i,0} =0$, $E_0 v^{i,j} =
(i-j+1)v^{i,j-1}$ for $1 \le j \le i$, $F_0 v^{i,j} = (j+1)v^{i,j+1}$ for
$0 \le j \le i-1$, $F_0 v^{i,i} = 0$, and $H_n v^{i,j} = E_n v^{i,j} = F_n
v^{i,j} = 0$ for $n \ge 1$. Hence the subspace $U^i$ of $V_{L^\perp}$
spanned by $v^{i,j}$, $0 \le j \le i$ is an $i+1$ dimensional irreducible
module for $\spn\{ H_0, E_0, F_0\} \cong sl_2$. Furthermore, the
$\Vaff$-submodule $\Vaff\cdot v^i$ of $V_{L^\perp}$ generated by $v^i$ is
isomorphic to an irreducible $L(k,0)$-module $L(k,U^i)$ with top level
$U^i$ \cite{FZ}. If $i = 0$, then $U^0 = \C\1$ and $L(k,U^0)$ coincides
with $L(k,0)$. We denote $L(k,U^i)$ by $L(k,i)$ for a general $i$ also and
identify it with $\Vaff\cdot v^i$. Let
\begin{equation*}
M^{i,j} = \{ v \in \Vaff\cdot v^i\,|\, H_m v = (i-2j)\delta_{m,0}v \text{
for } m\ge 0\}
\end{equation*}
for $0 \le i \le k$, $0 \le j \le k - 1$. Each $M^{i,j}$ is an
$M^{0,0}$-module. We decompose $\Vaff\cdot v^i$ into a direct sum of
irreducible $V^\gm$-modules and obtain the following lemma, which is a
generalization of Lemma \ref{lem:dec-0}.
\begin{lem}\label{lem:dec-LUi}
$L(k,i) = \oplus_{j=0}^{k-1} V_{\Z\gm + (i-2j)\gm/2k} \otimes M^{i,j}$ as
$V_{\Z\gm}$-modules.
\end{lem}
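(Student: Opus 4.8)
The plan is to argue as for Lemma~\ref{lem:dec-0} (the case $i=0$), inside $V_{L^\perp}$. Since $V^\gm = V_{\Z\gm}$ is rational \cite{Dong} and $L(k,i) = \Vaff\cdot v^i$ is a $V^\gm$-submodule of the $V^\gm$-module $V_{L^\perp}$, it decomposes as a direct sum of irreducible $V_{\Z\gm}$-modules, each isomorphic to one of $V_{\Z\gm+n\gm/2k}$, $0\le n\le 2k-1$. The first step is to see which ones occur. On $V_{\Z\gm+n\gm/2k}$ the operator $H_0 = \gm(0)$ has eigenvalues exactly $\{\,n+2k\ell \mid \ell\in\Z\,\}$, i.e.\ the residue class of $n$ modulo $2k$, since $\gm(0)e^{(n/2k+\ell)\gm} = (n+2k\ell)e^{(n/2k+\ell)\gm}$ and $\gm(0)$ commutes with every $\gm(-a)$. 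On the other hand $L(k,i)$ is spanned by the vectors $X^1_{n_1}\cdots X^r_{n_r}v^i$ with $X^p\in\{H,E,F\}$ (because $\Vaff$ is spanned by such words applied to $\1$), and $[H_0,H_n]=0$, $[H_0,E_n]=2E_n$, $[H_0,F_n]=-2F_n$ while $H_0 v^i = i v^i$, so every $H_0$-eigenvalue on $L(k,i)$ is congruent to $i$ modulo $2$. Consequently only the modules $V_{\Z\gm+n\gm/2k}$ with $n\equiv i\pmod 2$ appear, and these are exactly the $V_{\Z\gm+(i-2j)\gm/2k}$, $0\le j\le k-1$.

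The second step is to compute the multiplicities. For any $V_{\Z\gm}$-module $W$ and any integer $n$, the map $\phi\mapsto\phi(e^{n\gm/2k})$ identifies $\mathrm{Hom}_{V_{\Z\gm}}(V_{\Z\gm+n\gm/2k},W)$ with
\begin{equation*}
\{\,w\in W \mid \gm(m)w=0 \text{ for } m\ge1,\ \gm(0)w = nw\,\},
\end{equation*}
because in any summand $V_{\Z\gm+n'\gm/2k}$ with $n'\equiv n\pmod{2k}$ the space of vectors killed by all $\gm(m)$, $m\ge1$, of $\gm(0)$-weight exactly $n$ is one-dimensional (spanned by $e^{n\gm/2k}$), whereas summands with $n'\not\equiv n\pmod{2k}$ contribute nothing in that weight. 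Applying this with $W=L(k,i)$ and $n=i-2j$, and noting that the integers $i-2j$, $0\le j\le k-1$, are pairwise incongruent modulo $2k$ (their differences are nonzero even integers of absolute value at most $2k-2$), we find that the only summand of $L(k,i)$ contributing a vector of $\gm(0)$-weight $i-2j$ annihilated by $\gm(m)$, $m\ge1$, is $V_{\Z\gm+(i-2j)\gm/2k}$. Hence the multiplicity of $V_{\Z\gm+(i-2j)\gm/2k}$ in $L(k,i)$ equals
\begin{equation*}
\dim\{\,v\in\Vaff\cdot v^i \mid \gm(m)v=0 \text{ for } m\ge1,\ \gm(0)v=(i-2j)v\,\} = \dim M^{i,j},
\end{equation*}
and its isotypic component is $V^\gm\cdot M^{i,j}\cong V_{\Z\gm+(i-2j)\gm/2k}\otimes M^{i,j}$. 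Summing over $0\le j\le k-1$ gives the claimed decomposition.

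The eigenvalue computations and the complete reducibility are routine. The point that needs care is the bookkeeping modulo $2k$: one must check that for each $i$ with $0\le i\le k$ the $k$ shifts $i-2j$, $0\le j\le k-1$, label $k$ distinct irreducible $V_{\Z\gm}$-modules, so that the weight-$(i-2j)$ lowest-weight space of $L(k,i)$ is exactly $M^{i,j}$ and nothing more. This is the same check as in Lemma~\ref{lem:dec-0}, the only change being the nonzero base weight $i$ of the top level $U^i$.
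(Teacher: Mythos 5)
Your argument is correct and is essentially the paper's own: the paper proves the $i=0$ case (Lemma \ref{lem:dec-0}) by exactly this route — rationality of $V_{\Z\gm}$, the classification of its irreducibles as $V_{\Z\gm+n\gm/2k}$, the parity constraint on the $\gamma(0)$-eigenvalues, and identification of the multiplicity spaces with the highest-weight spaces $M^{i,j}$ — and then states Lemma \ref{lem:dec-LUi} as the same decomposition applied to $\Vaff\cdot v^i$. Your write-up merely makes explicit the bookkeeping (the $k$ residues $i-2j$ being pairwise incongruent mod $2k$ and the one-dimensionality of the weight-$n$ lowest-weight space in each summand) that the paper leaves implicit.
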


Next, we consider an isomorphism $\sigma = \exp(2\pi\sqrt{-1}\gamma(0)/2k)$
induced by the action of $\gamma(0)$ on $V_{L^\perp}$. Recall that $h(0)u =
2(q-r)u$ for $u \in V(k,0)$ being as in \eqref{eq:V-basis} and that $H_0
v^{i,j} = (i-2j)v^{i,j}$. Thus there are exactly $k$ distinct eigenvalues
$i-2j$, $0 \le j \le k-1$ modulo $2k$ of the action of $\gamma(0) = H_0$ on
$L(k,i)$. Hence $\sigma$ has $k$ distinct eigenvalues
$\exp(2\pi\sqrt{-1}(i-2j)/2k)$, $0 \le j \le k-1$ on $L(k,i)$. Let $L(k,i)
= \oplus_{j=0}^{k-1} L(k,i)^j$ be the eigenspace decomposition, where
$L(k,i)^j$ is the eigenspace for $\sigma$ on $L(k,i)$ with eigenvalue
$\exp(2\pi\sqrt{-1}(i-2j)/2k)$. Since $\la \gamma, (i-2j)\gamma/2k \ra =
i-2j$, Lemma \ref{lem:dec-LUi} implies that
\begin{equation}\label{eq:eigenspace-LUi}
L(k,i)^j = V_{\Z\gm + (i-2j)\gm/2k} \otimes M^{i,j}.
\end{equation}

For convenience, we understand the second parameter $j$ of $M^{i,j}$ to be
modulo $k$. We study some basic properties of $M^{0,0}$-modules $M^{i,j}$,
$0 \le i \le k$, $0 \le j \le k - 1$.

\begin{thm}\label{thm:properties-Mij}
$(1)$ $M^{i,j}$ is an irreducible $M^{0,0}$-module.

$(2)$ $M^{i,j} \cong M^{k-i,k-i+j}$ as $M^{0,0}$-modules.

$(3)$ The automorphism $\theta$ of $M^{0,0}$ induces a permutation $M^{i,j}
\mapsto M^{i,i-j}$ on these irreducible $M^{0,0}$-modules.

$(4)$ The top level of $M^{i,j}$ is a one dimensional space $\C v^{i,j}$,
$0 \le i \le k$, $0 \le j \le \min\{i, k-1\}$.
\end{thm}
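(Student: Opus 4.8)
The plan is to reduce everything to the structure theory of $V_{\Z\gm}$-modules that has been set up, namely the decompositions in Lemma~\ref{lem:dec-LUi} and the eigenspace description \eqref{eq:eigenspace-LUi}. The starting point is that $L(k,i)$ is an irreducible $L(k,0)$-module (since $\Vaff\cong L(k,0)$ is rational and $\Vaff\cdot v^i\cong L(k,U^i)$ by \cite{FZ}), and that $V_{\Z\gm}$ is a rational vertex operator algebra whose irreducible modules are the $V_{\Z\gm+n\gm/2k}$, $0\le n\le 2k-1$. I would first establish part~$(4)$ directly: from $H_n v^{i,j}=E_n v^{i,j}=F_n v^{i,j}=0$ for $n\ge 1$ one gets $(\om_{\mraff})_n v^{i,j}=0$ for $n\ge 1$ and $(\om_\gamma)_n v^{i,j}=0$ for $n\ge 1$, hence $\om_n v^{i,j}=0$ for $n\ge 1$; combined with the $h(0)$-eigenvalue being $i-2j$ and a weight count, $v^{i,j}$ generates the top level of $M^{i,j}$, and the explicit formula for $v^{i,j}$ together with the action of $\{H_0,E_0,F_0\}$ shows the $(i-2j)$-eigenspace of $H_0$ in $U^i$ is exactly $\C v^{i,j}$ when $0\le j\le\min\{i,k-1\}$. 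The condition $j\le k-1$ is needed so that the eigenvalue $i-2j$ is not congruent mod $2k$ to another $i-2j'$ with $0\le j'\le i$.

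For part~$(1)$, the key is \eqref{eq:eigenspace-LUi}: $L(k,i)^j=V_{\Z\gm+(i-2j)\gm/2k}\otimes M^{i,j}$ is an eigenspace of the finite-order automorphism $\sigma$ acting on the irreducible $L(k,0)$-module $L(k,i)$. One shows $L(k,i)^j$ is an irreducible module for the fixed-point subalgebra — here the relevant subalgebra containing $V_{\Z\gm}\otimes M^{0,0}$ acting on $L(k,i)^j$ — and then invokes the general fact (for a tensor product $V_1\otimes V_2$ of rational VOAs, an irreducible $V_1\otimes V_2$-module is $N_1\otimes N_2$ with $N_i$ irreducible, and conversely) to conclude that since $V_{\Z\gm+(i-2j)\gm/2k}$ is an irreducible $V_{\Z\gm}$-module, $M^{i,j}$ must be an irreducible $M^{0,0}$-module. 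Making "$L(k,i)^j$ is irreducible over $V_{\Z\gm}\otimes M^{0,0}$" precise is the one place where I expect to need care: it follows from the irreducibility of $L(k,i)$ over $L(k,0)$ and the fact that $V_{\Z\gm}$ together with $M^{0,0}$ generate the $\sigma$-fixed subalgebra, using that any submodule of $L(k,i)^j$ over this subalgebra would generate, under the full $L(k,0)$-action, a proper submodule of $L(k,i)$.

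Part~$(3)$ is essentially formal once the $v^{i,j}$ are understood: we showed $\theta v^{i,j}=v^{i,i-j}$, and $\theta$ restricted to $M^{0,0}$ is an automorphism, so $\theta$ carries the $M^{0,0}$-module $M^{i,j}=M^{0,0}\cdot v^{i,j}$ isomorphically onto $M^{0,0}\cdot v^{i,i-j}$; one only has to check the second index $i-j$ lies in the intended range or is interpreted mod $k$, consistent with the convention stated just before the theorem. The content is that $\theta(u_n w)=(\theta u)_n(\theta w)$, so applying $\theta$ intertwines the two module structures.

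Part~$(2)$ is the subtlest, and I expect it to be the main obstacle. The claim $M^{i,j}\cong M^{k-i,k-i+j}$ should come from the multiplication action of $e^{\pm\gm}$ (or $e^{\pm\gm/2}$) on $V_{L^\perp}$, which relates $L(k,i)$ and $L(k,k-i)$: recall the identities $e^{-\gm}\cdot(E_{-1})^j\1=\frac{j!}{(k-j)!}(F_{-1})^{k-j}\1$ and $e^\gm\cdot(F_{-1})^j\1=\frac{j!}{(k-j)!}(E_{-1})^{k-j}\1$ in $\C[L]$, and there should be an analogous statement sending $v^i=\sum_{|I|=i}e^{\al_I/2}$ to a multiple of $\sum_{|I'|=k-i}e^{-\al_{I'}/2}$ under multiplication by $e^{-\gm/2}$. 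Multiplication by $e^{-\gm/2}$ gives an intertwining operator (in the sense of lattice VOA module theory) and commutes with the $M^{0,0}$-action because $\gm\in L'^\perp$ inside the relevant lattice, i.e. $(\al_p-\al_q)$ is orthogonal to $\gm$; hence it restricts to an $M^{0,0}$-module isomorphism from $M^{i,j}$ onto $M^{k-i,j'}$ for the appropriate $j'$. Computing $j'$: multiplication by $e^{-\gm/2}$ shifts the $H_0$-eigenvalue by $\la\gm,-\gm/2\ra/1=-k$, so $i-2j\mapsto (i-2j)-k=(k-i)-2(k-i+j)$, giving $j'=k-i+j$ as claimed. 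The care needed is in checking that $e^{-\gm/2}$ acts as a nonzero isomorphism onto $\Vaff\cdot v^{k-i}$ (not merely into $V_{L^\perp}$) and that it genuinely commutes with every component operator $u_n$, $u\in M^{0,0}$ — this uses that $M^{0,0}\subset V_{L'}$ and $\la L',\gm\ra=0$, so the relevant vertex operators commute up to no cocycle correction since the commutator map is trivial on $L^\perp$.
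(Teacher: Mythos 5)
Your proposal is correct and follows essentially the same route as the paper: part (1) via the eigenspace decomposition \eqref{eq:eigenspace-LUi} and the simplicity/irreducibility of $L(k,i)$ (the paper's main text gives the direct fusion-rule version of your grading argument, and its remark after the proof gives exactly your fixed-point-subalgebra variant via \cite[Theorem 5.4]{DY}); part (2) via multiplication by $e^{-\gamma/2}$ commuting with $M^{0,0}\subset V_{L'}$ because $\langle L',\gamma\rangle=0$ and the cocycle is trivial; part (3) via $\theta v^{i,j}=v^{i,i-j}$ and $\theta H=-H$; and part (4) via the top level $U^i$ of $L(k,i)$ together with the tensor decomposition. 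The subtleties you flag (irreducibility of the $\sigma$-eigenspace over the fixed subalgebra, and identifying the image of $e^{-\gamma/2}$) are precisely the points the paper handles by the fusion rules of $V_{\Z\gamma}$-modules and by the decomposition of $V_{L^\perp}$, so no change of strategy is needed.
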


\begin{proof}
We first show the assertion (1). By Lemma \ref{lem:dec-0}, $L(k,0) =
\oplus_{j=0}^{k-1} V_{\Z\gm - j\gm/k} \otimes M^{0,j}$. Suppose $M^{0,j}$
is not an irreducible $M^{0,0}$-module and let $U$ be a proper submodule of
$M^{0,j}$. Take $0 \ne u \in U$. Let $S = L(k,0) \cdot u$ be the subspace
of $L(k,0)$ spanned by $v_n u$, $v \in L(k,0)$, $n \in \Z$, which is an
ideal of $L(k,0)$. Actually, $L(k,0)$ is equal to $S$, since it is a simple
vertex operator algebra. Now, let $v \in V_{\Z\gm - r\gm/k} \otimes
M^{0,r}$. Then the fusion rules $V_{\Z\gm + a} \times V_{\Z\gm + b} =
V_{\Z\gm + a + b}$ of irreducible $V_{\Z\gamma}$-modules \cite[Chapter
12]{DL} imply that $v_n u$ lies in $V_{\Z\gm - (r+j)\gm/k} \otimes
M^{0,r+j}$. Hence $S \cap (V_{\Z\gm - j\gm/k} \otimes M^{0,j}) = V_{\Z\gm -
j\gm/k} \otimes U$, which contradicts the fact that $L(k,0)=S$. Thus
$M^{0,j}$ is an irreducible $M^{0,0}$-module. We apply a similar argument
to the irreducible $L(k,0)$-module $L(k,i)$ and use the decomposition in
Lemma \ref{lem:dec-LUi}. Then we obtain (1).

Next, we show the assertion (2). A multiplication by $e^{-\gamma/2}$ on the
group algebra $\C[L^\perp]$ induces a linear isomorphism $\psi : u \otimes
e^\beta \mapsto u \otimes e^{\beta - \gamma/2}$, $u \in M(1)$, $\beta \in
L^\perp$ of $V_{L^\perp} = M(1) \otimes \C[L^\perp]$ onto itself. Since
$M^{0,0}$ is contained in $V_{L'}$ by Proposition \ref{prop:VL-prime}, it
follows from \eqref{eq:L-prime} and the definition of
$Y_{V_{L^\perp}}(v,x)$ that $\psi$ commutes with the vertex operator
$Y_{V_{L^\perp}}(v,x)$ associated with any $v \in M^{0,0}$. Thus $\psi$ is
an isomorphism of $M^{0,0}$-modules. We have $\psi( V_{\Z\gamma
+(i-2j)\gamma/2k}) = V_{\Z\gamma + ((k - i) - 2(k - i + j))\gamma/2k}$.
Hence (2) holds.

We show the assertion (3). Since $\theta v^{i,j} = v^{i,i-j}$, $U^i$ is
invariant under $\theta$. Moreover, $\theta(H_m v) = -H_m \theta(v)$ for $m
\in \Z$ and $v \in \Vaff\cdot v^i$, since $\theta H = -H$. Thus (3) holds.
Actually, $\theta$ maps $ V_{\Z\gamma +(i-2j)\gamma/2k}$ onto  $V_{\Z\gamma
+(i-2(i-j))\gamma/2k}$.

Finally, we show the assertion (4). The top level of the irreducible
$L(k,0)$-module $L(k,i)$ is $U^i = \oplus_{j=0}^i \C v^{i,j}$. Moreover,
$\gamma(0)v^{i,j} = (i-2j)v^{i,j}$ and so $v^{i,j} \in L(k,i)^j$, $0 \le j
\le \min\{i, k-1\}$. That is, the eigenvalues of $\sigma$ on the
eigenvectors $v^{i,j}$, $0 \le j \le \min\{i, k-1\}$ are all different. The
only exception is the case $i=k$ and $j=0,k$. Hence the top level of
$L(k,i)^j$ is a one dimensional space $\C v^{i,j}$ unless $i=k$ and $j=0$.
In such a case the top level of $M^{i,j}$ is also $\C v^{i,j}$ by
\eqref{eq:eigenspace-LUi}. As to the top level of $M^{k,0}$, recall that
$v^{k,0} = e^{\gamma/2}$ and $v^{k,k} = e^{-\gamma/2}$, on which $\sigma$
acts as $-1$. Thus the top level of $L(k,k)^0$ is a two dimensional space
$\C v^{k,0} + \C v^{k,k}$, which coincides with the top level of
$V_{\Z\gamma+\gamma/2}$. Therefore, the top level of $M^{k,0}$ is one
dimensional by \eqref{eq:eigenspace-LUi}. This proves (4). Note that
$M^{k,0} \cong M^{0,0}$ as $M^{0,0}$-modules by the assertion (2). Indeed,
the isomorphism $\psi$ of $M^{0,0}$-modules maps $v^{k,0}$ to the vacuum
vector $\1$ and $\1$ to $v^{k,k}$.
\end{proof}

One may prove the irreducibility of $M^{i,j}$ in a different manner.
Indeed, each irreducible $L(k,0)$-module $L(k,i)$ is $\sigma$-stable, for
$\sigma$ is an inner automorphism. We see from \eqref{eq:eigenspace-LUi}
that the space $L(k,0)^{\la \sigma \ra}$ consisting of the elements of
$L(k,0)$ fixed by $\sigma$ is $V_{\Z\gamma} \otimes M^{0,0}$. By
\cite[Theorem 5.4]{DY}, the eigenspace $L(k,i)^j$ for $\sigma$ is an
irreducible $L(k,0)^{\la \sigma \ra}$-module. Hence
\eqref{eq:eigenspace-LUi} implies that $M^{i,j}$ is irreducible as a module
for $M^{0,0}$. In the proof of Theorem \ref{thm:properties-Mij} (1), we
have shown that $v_n u \in V_{\Z\gm - (r+j)\gm/k} \otimes M^{0,r+j}$ by
using the fusion rules of $V_{\Z\gamma}$. This fact comes from
\eqref{eq:eigenspace-LUi} also. The isomorphism of $M^{0,0}$-modules
discussed in Theorem \ref{thm:properties-Mij} (2) is a special case of
\cite[Corollary 5.7]{DLM2} (see \cite[Remark 5.8]{DLM2} also). That is, it
is a kind of spectrum flow.

The character of $M^{i,j}$ is known \cite{CGT, GQ}. In fact,
\begin{equation*}
\ch M^{i,j} = \eta(\tau) c^i_{i-2j}(\tau)
\end{equation*}
by \cite[(3.34)]{GQ}. Note that $k$, $l$ and $m$ of \cite{GQ} are $k$, $i$
and $i-2j$, respectively in our notation. Then \cite[(3.36), (3.40)]{GQ}
mean $c^i_{i-2j} = c^i_{i-2(j-k)}$, $c^i_{i-2j} = c^i_{2j-i}$ and
$c^i_{i-2j} = c^{k-i}_{k+i-2j}$. As to the first equation, recall that our
$j$ of $M^{i,j}$ is considered to be modulo $k$. The remaining two
equations are compatible with Theorem \ref{thm:properties-Mij} (3) and (2),
respectively.

\begin{prop}\label{prop:action-on-vij}
The weight $0$ operators $o(\omega) = \omega_1$, $o(W^p) = W^p_{p-1}$,
$p=3,4,5$ act on $v^{i,j}$, $0 \le i \le k$, $0 \le j \le i$ as follows.
\begin{equation*}
o(\om)v^{i,j} = \frac{1}{2k(k+2)}\Big( k(i-2j) - (i-2j)^2 + 2k(i-j+1)j
\Big) v^{i,j},
\end{equation*}
\begin{equation*}
o(W^3) v^{i,j} = \Big( k^2 (i-2j) - 3k(i-2j)^2 + 2(i-2j)^3 -
6k(i-2j)(i-j+1)j \Big) v^{i,j},
\end{equation*}
\begin{equation*}
\begin{split}
o(W^4) v^{i,j} &=  \Big( 2k^2(k^2+k+1)(i-2j) -
k(13k^2+8k+2)(i-2j)^2\\
& \quad + 2k(11k+6)(i-2j)^3 - (11k+6)(i-2j)^4\\
& \quad + 4k^2(k-3)(k-2)(i-j+1)j - 4k^2(6k-5)(i-2j)(i-j+1)j\\
& \quad + 4k(11k+6)(i-2j)^2(i-j+1)j\\
& \quad -2k^2(6k-5)(i-j+1)(i-j+2)(j-1)j \Big) v^{i,j},
\end{split}
\end{equation*}
\begin{equation*}
\begin{split}
o(W^5) v^{i,j} &= \Big(- 2k^3(k^2+3k+5)(i-2j) + 5k^2(5k^2+6k+6)(i-2j)^2\\
& \quad - 20k(4k^2+3k+1)(i-2j)^3 + 5k(19k+12)(i-2j)^4\\
& \quad - 2(19k+12)(i-2j)^5\\
& \quad + 10k^2(5k^2-14k+20)(i-2j)(i-j+1)j\\
& \quad - 20k^2(10k-7)(i-2j)^2(i-j+1)j\\
& \quad + 10k(19k+12)(i-2j)^3(i-j+1)j\\
& \quad - 10k^2(10k-7)(i-2j)(i-j+1)(i-j+2)(j-1)j \Big) v^{i,j}.
\end{split}
\end{equation*}
\end{prop}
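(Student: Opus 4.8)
The point of departure is that each $v^{i,j}$, $0\le i\le k$, $0\le j\le i$, lies in the top level $U^i$ of the irreducible $L(k,0)$-module $L(k,i)=\Vaff\cdot v^i$ and is annihilated by all positive modes, $H_nv^{i,j}=E_nv^{i,j}=F_nv^{i,j}=0$ for $n\ge1$, while the zero modes act on $U^i$ by the displayed formulas, exhibiting $U^i$ as the $(i{+}1)$-dimensional irreducible module for $\spn\{H_0,E_0,F_0\}\cong sl_2$ (so its $H_0$-weight spaces are one-dimensional). Since $\om$ and $W^3,W^4,W^5$ lie in $N_0$ and are therefore fixed by $h(0)$, formula \eqref{eq:basic-formula1} gives $[H_0,o(W^p)]=(H_0W^p)_{p-1}=0$ and $[H_0,o(\om)]=0$; moreover $o(\om)=\om_1$ and $o(W^p)=W^p_{p-1}$ are weight-preserving, hence leave invariant the graded piece of $L(k,i)$ at the bottom of its grading, namely $U^i$. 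Consequently $o(\om)v^{i,j}$ and $o(W^p)v^{i,j}$ again lie in $U^i$, in its $(i-2j)$-eigenspace, which is the line $\C v^{i,j}$; alternatively this is immediate from Theorem~\ref{thm:properties-Mij}(4), as $o(\om)$ and $o(W^p)$ preserve the one-dimensional top level $\C v^{i,j}$ of the $M^{0,0}$-module $M^{i,j}$. In every case the left-hand side of each displayed identity is thus already known to be a scalar multiple of $v^{i,j}$, and only that scalar needs to be identified.

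To compute the scalar I would substitute the expressions of $\om$ and $W^3$ as combinations of modes of $H,E,F$ recorded after \eqref{eq:W3} in Section~\ref{Sect:VOA-W}, and of $W^4,W^5$ taken from Appendix~\ref{App:def-W3-W4-W5}, and expand the component operators $\om_1$, $W^3_2$, $W^4_3$, $W^5_4$ by repeated use of the iterate formula \eqref{eq:basic-formula2}. Applied to $v^{i,j}$, and using $a_nv^{i,j}=0$ for $n\ge1$ together with the level-$k$ relations \eqref{eq:affine-commutation}, every summand collapses to a word in $H_0,E_0,F_0$ acting on $v^{i,j}$. Such a word must preserve the $H_0$-weight, and since $W^p$ has weight at most $5$ it contains at most two $E_0$'s and two $F_0$'s; hence it reduces to a product of the elementary quantities $(H_0)^mv^{i,j}=(i-2j)^mv^{i,j}$, $F_0E_0v^{i,j}=(i-j+1)j\,v^{i,j}$, and $(F_0)^2(E_0)^2v^{i,j}=(i-j+1)(i-j+2)(j-1)j\,v^{i,j}$, and collecting terms yields the asserted polynomials in $i,j$. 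For instance $\om_1v^{i,j}=\tfrac{1}{2k(k+2)}\bigl(-(H_{-1}H)_1+k(E_{-1}F)_1+k(F_{-1}E)_1\bigr)v^{i,j}=\tfrac{1}{2k(k+2)}\bigl(-(i-2j)^2+k\,F_0E_0+k\,E_0F_0\bigr)v^{i,j}$, which becomes the claimed formula after substituting $E_0F_0=F_0E_0+H_0$; $W^3$, $W^4$ and $W^5$ are handled the same way.

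All the conceptual content sits in the first paragraph; the only genuine difficulty is the sheer bulk of the symbolic manipulation for $W^4$ and $W^5$, which are intricate weight-$4$ and weight-$5$ combinations of modes. As elsewhere in the paper I would carry out this expansion with Risa/Asir, treating $k$ as a formal variable, which also serves as an independent check that each output is indeed a scalar multiple of $v^{i,j}$. One could instead reduce the work to a handful of evaluations: by the first paragraph the scalar multiplying $v^{i,j}$, for fixed $k$, is a polynomial in $i-2j$ and in the Casimir value $\tfrac12 i(i+2)$ of bounded degree, of definite parity in $i-2j$ according to $\theta W^p=\pm W^p$, so it is pinned down by applying $o(W^p)$ to finitely many explicit vectors $v^{i,j}\in V_{L^\perp}$ and interpolating; but justifying the degree and parity bounds rests on exactly the same mode analysis, so the direct computation is the natural route.
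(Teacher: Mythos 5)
Your proposal is correct and takes essentially the same route as the paper: the paper likewise observes that only zero modes survive on $v^{i,j}$, deriving the closed formula $o(a^1(-m_1)\cdots a^r(-m_r)\1)v^{i,j}=(-1)^{m_1+\cdots+m_r-r}a^r(0)\cdots a^1(0)v^{i,j}$ directly from the normal-ordered form of $Y(u,x)$ rather than by iterating \eqref{eq:basic-formula2}, and then evaluates the resulting words in $h(0),e(0),f(0)$ on $v^{i,j}$ exactly as you do. Your preliminary argument that the result must be a scalar multiple of $v^{i,j}$ is a harmless addition that the paper omits, since the direct computation yields it anyway.
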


\begin{proof}
We use the representation of $\widehat{sl}_2$ given by the correspondence
\eqref{eq:HhEeFf}, namely

(i) $h(0)v^{i,j} = (i-2j)v^{i,j}$,

(ii) $e(0)v^{i,0} = 0$, $e(0)v^{i,j} = (i-j+1)v^{i,j-1}$ for $1 \le j \le
i$,

(iii) $f(0)v^{i,i} = 0$, $f(0)v^{i,j} = (j+1)v^{i,j+1}$ for $0 \le j \le
i-1$,

(iv) $a(n)v^{i,j} = 0$ for $a \in \{h,e,f\}$, $n \ge 1$.

Recall the vertex operator $Y(u,x)$ of the $L(k,0)$-module $L(k,i)$. For $a
\in \{h,e,f\}$ and $m \ge 1$,
\begin{equation*}
Y(a(-m)\1,x) = \frac{1}{(m-1)!} \Big( \frac{d}{dx}\Big)^{m-1} a(x).
\end{equation*}

Hence the coefficient $o(a(-m)\1)$ of $x^{-m}$ in $Y(a(-m)\1,x)$ is
$(-1)^{m-1}a(0)$. Let $a^1, \ldots, a^r \in \{ h,e,f\}$ and $m_1,\ldots,
m_r \ge 1$. The vertex operator $Y(u,x)$ associated with the vector $u =
a^1(-m_1) \cdots a^r(-m_r)\1$ is given recursively by
\begin{align*}
Y(u,x) &= \Big( \frac{1}{(m_1-1)!} \Big( \frac{d}{dx}\Big)^{m_1-1} a^1(x)^-
\Big) Y(a^2(-m_2) \cdots a^r(-m_r)\1,x) \\
& \qquad \quad + Y(a^2(-m_2) \cdots a^r(-m_r)\1,x) \Big( \frac{1}{(m_1-1)!}
\Big( \frac{d}{dx}\Big)^{m_1-1} a^1(x)^+ \Big),
\end{align*}
where $a(x)^- = \sum_{n < 0} a(n)x^{-n-1}$ and $a(x)^+ = \sum_{n \ge 0}
a(n)x^{-n-1}$. The operator $o(u)$ is the coefficient of $x^{-m_1 - \cdots
- m_r}$ in $Y(u,x)$. Since $a(n) v^{i,j} = 0$ for $n \ge 1$, we have
\begin{equation*}
o(a^1(-m_1) \cdots a^r(-m_r)\1) v^{i,j} = (-1)^{m_1 + \cdots + m_r - r}
a^r(0) \cdots a^1(0) v^{i,j}.
\end{equation*}

For instance, $o(h(-2)\1)$, $o(h(-1)^2\1)$ and $o(e(-1)f(-1)\1)$ act on
$v^{i,j}$ as $-(i-2j)$, $(i-2j)^2$ and $(i-j+1)j$, respectively. Therefore,
we obtain $o(\omega)v^{i,j}$. The action of $o(W^3)$, $o(W^4)$ and $o(W^5)$
on $v^{i,j}$ can be calculated similarly.
\end{proof}

Let $a(i,j)$ and $b(i,j)$ be the eigenvalues of the operators $o(\omega)$
and $o(W^3)$ on $v^{i,j}$ given in Proposition \ref{prop:action-on-vij},
respectively. We can verify that $a(i,i-j) = a(i,j)$ and $b(i,i-j) =
-b(i,j)$. These relations are compatible with Theorem
\ref{thm:properties-Mij} (3), since $\theta$ fixes $\omega$ and maps $W^3$
to its negative. Similar relations hold for the eigenvalues of $o(W^4)$ and
$o(W^5)$ on $v^{i,j}$.

We close this section with the following conjecture.

\begin{conjecture}\label{Conj:W-properties}
$(1)$ $M^{i,j}$, $0 \le i \le k$, $0 \le j \le i$ are not isomorphic each
other except the isomorphisms $M^{i,i} \cong M^{k-i,0}$. There are
$k(k+1)/2$ inequivalent irreducible $M^{0,0}$-modules among those
$M^{i,j}$'s.

$(2)$ $M^{0,0} = \W$ and there are exactly $k(k+1)/2$ inequivalent
irreducible $M^{0,0}$-modules, which are represented by $M^{i,j}$, $0 \le i
\le k$, $0 \le j \le i-1$. Furthermore, Zhu's algebra $A(M^{0,0})$ of
$M^{0,0}$ is generated by $[\omega]$ and $[W^3]$ if $k \ge 3$.

$(3)$ The vertex operator algebra $M^{0,0}$ is rational and $C_2$-cofinite.
\end{conjecture}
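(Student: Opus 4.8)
The plan is to reduce everything to the structure of Zhu's algebra $A(\W)$ and the Poisson algebra $\W/C_2(\W)$, with singular vectors supplying all the relations. First I would settle the generation statement $M^{0,0}=\W$ of $(2)$. Since $K_0=M^{0,0}\cong\Vaff\cap V_{L'}$ by Proposition \ref{prop:VL-prime}, and $\omega$, $W^3$ are written explicitly inside $V_{L'}$ by \eqref{eq:omega-in-VL'} and \eqref{eq:W3-in-VL'}, it suffices to show that the subalgebra they generate --- which contains $W^4$ and $W^5$ through \eqref{eq:W3m1W3}, \eqref{eq:W3m1W4}, hence equals $\W$ --- already fills up $\Vaff\cap V_{L'}$. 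By Theorem \ref{thm:two-properties-W} one need only exhibit, using the normal-form vectors \eqref{eq:normal-form} built from $W^3$ alone, a spanning set of each graded piece of $K_0$; here $\bu^0=f(0)^{k+1}e(-1)^{k+1}\1$ and its descendants account for the kernel of the surjection $\tW\to\W$. For $k\le6$ this is a finite computation in $V_{L^\perp}$.

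Next I would pin down $A(\W)$. By Lemma \ref{lem:Zhu-tW}, passed to the quotient, $A(\W)$ is a quotient of $\C[w_2,w_3,w_4,w_5]$; I impose on it the relations coming from the singular vectors $\bv^0$, $\bv^1$, $\bv^2$ of $\tW$ --- which produce $Q_0$, $Q_1$ and the analogous weight $10$ polynomial, valid for all $k$ --- together with the relations coming from $\bu^r=(W^3_1)^r\bu^0$, $r=0,1,2,3$. The $\bu^r$-relations are what allow $[W^4]$ and $[W^5]$ to be eliminated in favour of $[\omega]$ and $[W^3]$ when $k\ge3$, giving the last clause of $(2)$, and altogether the relations should cut out a zero-dimensional variety. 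To count its points I match with Proposition \ref{prop:action-on-vij}: the tuples $\bigl(o(\omega),o(W^3),o(W^4),o(W^5)\bigr)$ acting on the top vectors $v^{i,j}$, $0\le j\le i\le k$, give $k(k+1)/2$ distinct values once the identifications $M^{i,j}\cong M^{k-i,k-i+j}$ of Theorem \ref{thm:properties-Mij}$(2)$ are taken into account, and one checks there are no further coincidences --- this is part $(1)$. Because the $M^{i,j}$ are pairwise non-isomorphic irreducible $M^{0,0}$-modules with one-dimensional top levels (Theorem \ref{thm:properties-Mij}), $A(\W)$ has at least $k(k+1)/2$ distinct one-dimensional modules, so $\dim A(\W)\ge k(k+1)/2$; comparison with the upper bound coming from the relations forces equality, shows $A(\W)$ is reduced commutative, and, via Zhu's correspondence, classifies the irreducible $\W$-modules, which finishes $(2)$.

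For part $(3)$ I would first obtain $C_2$-cofiniteness: using the spanning set of Lemma \ref{spanning-set}, the relations $B_0$, $B_1$, $B_2$ in $\W/C_2(\W)$ coming from $\bv^0$, $\bv^1$, $\bv^2$, and the relations coming from the images of $\bu^0,\dots,\bu^3$, one shows that the images of $W^4$, $W^5$ and of all sufficiently high powers of $\omega$ and $W^3$ are nilpotent in $\W/C_2(\W)$, so that $\W/C_2(\W)$ is finite-dimensional and $K_0=\W$ is $C_2$-cofinite. Rationality should then follow by combining $C_2$-cofiniteness with the semisimplicity of $A(\W)$ established above and the complete reducibility furnished by Lemmas \ref{lem:dec-0} and \ref{lem:dec-LUi}: $L(k,0)$ and $V_{\Z\gm}$ are rational, $L(k,0)^{\langle\sigma\rangle}=V_{\Z\gm}\otimes M^{0,0}$, and a coset-rationality argument yields rationality of $M^{0,0}$ with the $M^{i,j}$ exhausting its irreducibles.

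The hard part will be supplying the input to the first two steps for general $k$. The singular vectors $\bv^0$, $\bv^1$, $\bv^2$ of $\tW$ are available in closed form for every $k$, but $\bu^0=f(0)^{k+1}e(-1)^{k+1}\1$ can be expanded as a linear combination of normal-form vectors only for a fixed small $k$, and there is no $k$-uniform formula for the $\bu^r$; indeed Conjecture \ref{Conj:ideal-generator}, that $\tI$ is generated by $\bu^0$, is itself open. Without such control one cannot show, uniformly in $k$, that the singular-vector relations cut $A(\W)$ down to dimension $k(k+1)/2$, nor that $\W/C_2(\W)$ is finite-dimensional. What seems to be needed is a genuinely different, $k$-independent description of $\tI$ --- presumably through the embedding $\Vaff\cap V_{L'}\subseteq V_{L^\perp}$ and the representation theory of $V_{\Z\gm}$ from the level-rank picture of Section \ref{Sect:VOA-W} --- and this is exactly why only $k\le6$ is treated here.
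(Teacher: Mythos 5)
Your overall strategy is the one the paper follows for $k\le 6$: the singular vectors $\bv^0,\bv^1,\bv^2$ and $\bu^r=(W^3_1)^r\bu^0$ yield polynomial relations in $A(\tW)$ and in $\tW/C_2(\tW)$, a Gr\"obner-basis computation bounds $\dim A(\W)$ above by $k(k+1)/2$, the eigenvalue tuples of Proposition \ref{prop:action-on-vij} on the top vectors $v^{i,j}$ supply the matching lower bound and prove (1), and the relations $A_r$, $B_s$ give $C_2$-cofiniteness. You also correctly identify the obstruction to a general-$k$ argument, namely the absence of a closed normal-form expression for $\bu^0$; this is precisely why the statement remains a conjecture and only $k\le 6$ is settled.

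Two steps of your sketch do not close as written. First, you propose to prove $\W=M^{0,0}$ at the outset by exhibiting a spanning set of ``each graded piece'' of $K_0$; this is not a finite computation (there are infinitely many graded pieces), and appealing to $\bu^0$ and its descendants generating the kernel of $\tW\to\W$ is Conjecture \ref{Conj:ideal-generator}, which is itself open. The paper obtains $\W=M^{0,0}$ only \emph{after} classifying the irreducible $\W$-modules: if $\W\ne M^{0,0}$, then $M^{0,0}/\W$ has an irreducible quotient of integral weight greater than $2$ (the two algebras agree in weights $0,1,2$), whereas every irreducible $\W$-module $U^{i,j}$ has top weight in $\CE$, hence at most $6/5$ (resp.\ $3/2$), with $0$ the only integer --- a contradiction. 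You should reverse the order of your first two steps and use this weight argument. Second, for rationality, semisimplicity of $A(\W)$ together with $C_2$-cofiniteness is not known to imply rationality, and there is no general coset-rationality theorem to invoke here. The paper instead verifies the hypothesis of \cite[Proposition 5.11]{DLTYY}, that every $\W$-module generated by an irreducible $A(\W)$-module is irreducible. For $k=5$ this holds because no two elements of $\CE$ differ by a nonzero integer; for $k=6$ the pair $5/96$, $101/96$ forces an explicit check (the system $F_1=F_2=F_3=F_4=0$) that no singular vector occurs in weight $\lambda+1$ above $\lambda=5/96$. Your sketch omits this verification, and it is the one place where the argument could genuinely fail rather than merely require more computation.
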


\section{Case $k \le 6$}\label{Sect:small-k-case}
In this section we will show that Conjecture \ref{Conj:W-properties} is
true for $k \le 6$. Indeed, one may verify that there are $k(k+1)/2$
different pairs of eigenvalues of the operators $o(\omega)$ and $o(W^3)$ on
the top level $\C v^{i,j}$ of $M^{i,j}$, $0 \le i \le k$, $0 \le j \le i-1$
once $k$ is given (see Proposition \ref{prop:action-on-vij}). That is,
$o(\omega)$ and $o(W^3)$ are expected to be sufficient to distinguish
inequivalence of those irreducible $M^{0,0}$-modules $M^{i,j}$'s. This is
the case if $k$ is a small positive integer, say $k \le 6$. In this way we
see that the assertion (1) of Conjecture \ref{Conj:W-properties} is true
for $k \le 6$.

The singular vector discussed in Section \ref{Sect:maximal-ideal-tI} plays
a crucial role in the proof of the remaining assertions of Conjecture
\ref{Conj:W-properties}. Let $\bu^0 = f(0)^{k+1}e(-1)^{k+1}\1$. By Theorem
\ref{thm:singular-vec}, $\bu^0 \in \tI$ and so $\bu^0 = 0$ in $K_0 =
M^{0,0}$. Our argument is based on a detailed analysis of the vector
$\bu^0$. First of all, we express $\bu^0$ as a linear combination of the
basis \eqref{eq:V-basis} of $V(k,0)$. The expression enables us to write
$\bu^0$ as a linear combination of the vectors of normal form
\eqref{eq:normal-form} of weight $k+1$. This in particular implies that
$\tW$ contains $\bu^0$. Unfortunately, we do not succeed in handling this
process for a general $k$. It seems difficult even to show that $\bu^0 \in
\tW$. Therefore, we discuss only the case $k \le 6$ in this section.
Actually, $\bu^0$ is a scalar multiple of $W^3$, $W^4$ or $W^5$ in the case
$k = 2$, $3$ or $4$. In such a degenerate case, $\W$ is isomorphic to a
well-known vertex operator algebra (see below for details). Thus we
concentrate on the cases $k=5$ and $6$.

We study Zhu's algebra $A(\W)$ of $\W$ for the classification of
irreducible $\W$-modules. It turns out that the null fields $\bv^0$ and
$\bv^1$ considered in Section \ref{Sect:W2345} and $\bu^r = (W^3_1)^r
\bu^0$, $r = 0,1,2,3$ are sufficient to determine $A(\W)$ in the case
$k=5$, $6$. Once all irreducible $\W$-modules are known, we can show that
$\W = M^{0,0}$. One more null field $\bv^2$ is necessary for the proof of
the $C_2$-cofiniteness of $\W$. Finally, we use \cite[Proposition
5.11]{DLTYY} to establish the rationality of the vertex operator algebra
$\W$.

\subsection{Case $k=2$}\label{subsec:k-is-2}
In this case $\bu^0$ is a scalar multiple of $W^3$. In fact, we have $\bu^0
= -3W^3$. Thus $W^3 \in \tI$. Now, $W^3_1 W^3 = (72/7)W^4$ by
\eqref{eq:W3m1W3}, for we are assuming that $k=2$. Hence $W^4 \in \tI$.
Then \eqref{eq:W3m1W4} implies that $W^5 \in \tI$. Therefore, $W^3$, $W^4$
and $W^5$ become $0$ in $M^{0,0}$. The vertex operator algebra $\W$ is
generated by the conformal vector $\omega$ and it is isomorphic to a simple
Virasoro vertex operator algebra $\CL(1/2,0)$ of central charge $1/2$. It
is well known that $\CL(1/2,0)$ has exactly three irreducible modules
$\CL(1/2,h)$, $h = 0, 1/2, 1/16$, where $\CL(c,h)$ denotes an irreducible
highest weight module with highest weight $h$ for a Virasoro algebra of
central charge $c$. Actually, $M^{0,0}$, $M^{2,1}$ and $M^{1,0}$ are
isomorphic to those irreducible modules, respectively. Moreover, we have
$\W = M^{0,0}$. Thus Conjecture \ref{Conj:W-properties} is true for $k=2$.

\subsection{Case $k=3$}\label{subsec:k-is-3}
In this case $\bu^0 = -(8/13)W^4$ is a scalar multiple of $W^4$. Thus $W^4
\in \tI$. Then \eqref{eq:W3m1W4} with $k = 3$ implies that $W^5 \in \tI$.
Hence $W^4$ and $W^5$ become $0$ in $M^{0,0}$. The vertex operator algebra
$\W$ is isomorphic to a three state Potts model $\CL(4/5,0) \oplus
\CL(4/5,3)$. It is known that a three state Potts model has exactly six
irreducible modules \cite{KMY}. Moreover, we have $\W = M^{0,0}$. The
results in \cite{KMY} agree with the assertions of Conjecture
\ref{Conj:W-properties}. The vertex operator algebra $V_{L'} \cong
V_{\sqrt{2}A_2}$ was studied in detail \cite{KMY}. For the relationship
between $M^{0,0}$ and a three state Potts model, see \cite[Section 5]{CGT},
\cite[Appendix B]{GQ}.

\subsection{Case $k=4$}\label{subsec:k-is-4}
In this case $\bu^0 = (15/22)W^5$ is a scalar multiple of $W^5$ and so
$W^5$ becomes $0$ in $M^{0,0}$. The vertex operator algebra $\W$ is
isomorphic to $V_{\Z\beta}^+$ with $\la\beta,\beta\ra = 6$, which has
exactly ten irreducible modules \cite{DN}. Moreover, we have $\W =
M^{0,0}$. The results in \cite{DN} agree with the assertions of Conjecture
\ref{Conj:W-properties}.\footnote{In the table of \cite[page 186]{DN} the
action of $J$ on the top level of $V^\pm_{L+\alpha/2}$ should read
$k^2/4-k/4$.} The vertex operator algebra $V_{L'} \cong V_{\sqrt{2}A_3}$
was studied in detail \cite{DLY}.

\subsection{Case $k=5$}\label{subsec:k-is-5}
Let $\bv^0$ and $\bv^1$ be as in Section 2. In addition to these two null
fields of $\tW$, we consider the image of $\bu^0$ under the operator
$W^3_1$ successively, that is, $\bu^r = (W^3_1)^r \bu^0$, $r = 1, 2, 3$.
The weight of $\bu^r$ is $k + 1 + r$. We first express $\bu^r$ as a linear
combination of the basis \eqref{eq:V-basis} of $V(k,0)$, and then express
it as a linear combination of the vectors of normal form
\eqref{eq:normal-form}. For instance,
\begin{equation*}
\begin{split}
\bu^0 &=
-(56260915200/97)\omega_{-5}\1 -(47822745600/97)\omega_{-3}\omega_{-1}\1\\
& \quad + (43180603200/97)(\omega_{-2})^2\1
+ (33230937600/97)(\omega_{-1})^3\1\\
& \quad -(4032/5)(W^3_{-1})^2\1 + (550368/97)\omega_{-1}W^4_{-1}\1 +
(340704/97)W^4_{-3}\1.
\end{split}
\end{equation*}

This equation is obtained from the expression of the vectors $\bu^0$,
$\omega_{-5}\1$, $\omega_{-3}\omega_{-1}\1$, $(\omega_{-2})^2\1$,
$(\omega_{-1})^3\1$, $(W^3_{-1})^2\1$, $\omega_{-1}W^4_{-1}\1$ and
$W^4_{-3}\1$ as linear combinations of the basis \eqref{eq:V-basis} of
$V(k,0)$. The above expression of $\bu^0$ implies that $\bu^0 \in \tW^+$.
As to $\bu^1$, $\bu^2$ and $\bu^3$, see Appendix
\ref{App:Case-k-5-singular-vec}.

Next, take the image in Zhu's algebra $A(\tW) = \tW/O(\tW)$ of the right
hand side of the expression of $\bu^r$ as a linear combination of the
vectors of normal form. Then we can express the image $[\bu^r]$ of $\bu^r$
in $A(\tW)$ as a linear combination of elements of the form
\eqref{eq:Zhu-element}. We then replace $[\omega]$, $[W^3]$, $[W^4]$ and
$[W^5]$ with $w_2$, $w_3$, $w_4$ and $w_5$, respectively in the expression
of $[\bu^r]$. Let $P_r$, $r=0,1,2,3$ be the polynomial in $w_2$, $w_3$,
$w_4$, $w_5$ obtained from $\bu^r$ in this manner. Actually, we multiply it
by a suitable integer. Then we have
\begin{align*}
P_0 &= 82418000 w_2^3-36225000 w_2^2+(1365 w_4+2530000) w_2-194
w_3^2-130 w_4,\\
P_1 &= (5116834800 w_2^2-3289532400 w_2-49959 w_4+190779600) w_3\\
& \quad -3354260 w_2 w_5 +479180 w_5,\\
P_2 &= -519970178910210000 w_2^4 + 301201024956142500 w_2^3\\
& \quad +(- 8403180446500 w_4 - 47718693405180000) w_2^2\\
& \quad +(1231205050775 w_3^2 + 1880038332025 w_4 + 2220670158630000) w_2\\
& \quad -180544972860 w_3^2 + 33957081 w_3 w_5 + 1437404 w_4^2
-102193394550 w_4,\\
P_3 &= -46312512741411 w_3^3\\
& \quad +(8531538341629506000 w_2^3 - 737916475955662500 w_2^2\\
& \qquad +(433503066092460 w_4 - 286997147877132000) w_2\\
& \qquad -37952176698930 w_4 + 27372745589112000) w_3\\
& \quad +6990074602966000 w_2^2 w_5 - 1318615129549900 w_2 w_5\\
& \quad -3246519796 w_4 w_5 +53681912466000 w_5.
\end{align*}

Since $\W$ is a homomorphic image of $\tW$, Zhu's algebra $A(\W)$ of $\W$
is a homomorphic image of $A(\tW)$. Take the composition with the
surjective homomorphism $\tvp : \C[w_2,w_3,w_4,w_5] \rightarrow A(\tW)$ of
associative algebra considered in Section \ref{Sect:W2345}. Then we obtain
a surjective homomorphism $\varphi : \C[w_2,w_3,w_4,w_5] \rightarrow
A(\W)$. The kernel of $\varphi$ contains the polynomials $Q_0$ and $Q_1$
studied in Section \ref{Sect:W2345} with $k = 5$, for $\bv^0 = \bv^1 = 0$
in $V(k,0)$. The kernel also contains the above four polynomials $P_0$,
$P_1$, $P_2$ and $P_3$, for $\bu^0$, $\bu^1$, $\bu^2$ and $\bu^3$ lie in
$\tI$. We can verify that a Gr\"{o}bner basis of the ideal $\CP$ of
$\C[w_2,w_3,w_4,w_5]$ generated by $P_0$, $P_1$, $P_2$, $P_3$, $Q_0$ and
$Q_1$ with $k = 5$ consists of the five polynomials
\begin{align*}
R_1 &= w_2(5w_2-6)(5w_2-4)(7w_2-6)(7w_2-2)\\
& \qquad \quad \cdot
(35w_2-23)(35w_2-17)(35w_2-3)(35w_2-2),\\
R_2 &=
w_3(5w_2-6)(5w_2-4)(35w_2-23)(35w_2-17)(35w_2-3)(35w_2-2),\\
R_3 &= p(w_2) + 564841728w_3^2,\\
R_4 &= q(w_2) + 14685884928w_4,\\
R_5 &= r(w_2)w_3 + 5575284w_5,
\end{align*}
where $p(w_2)$, $q(w_2)$ and $r(w_2)$ are polynomials in $w_2$ of degree
$8$, $8$ and $5$, respectively. The common factor of $R_1$ and $p(w_2)$ is
$w_2(7w_2-6)(7w_2-2)$ and that of $R_1$ and $q(w_2)$ is $w_2$, while
$r(w_2)$ has no common factor with $R_1$. The Gr\"{o}bner basis implies
that $\C[w_2,w_3,w_4,w_5]/\CP$ is a $15$ dimensional space with basis
$w_2^m + \CP$, $0 \le m \le 8$, $w_2^n w_3 + \CP$, $0 \le n \le 5$. In
particular, $\C[w_2,w_3,w_4,w_5]/\CP$ is generated by $w_2 + \CP$ and $w_3
+ \CP$.

We do not show that $\W = M^{0,0}$ so far. Hence the $15$ inequivalent
irreducible $M^{0,0}$-modules $M^{i,j}$ constructed in Section
\ref{Sect:VOA-W} may not be irreducible nor inequivalent as $\W$-modules.
Let $N^{i,j}$ be the $\W$-submodule of $M^{i,j}$ generated by $v^{i,j}$, so
that the top level of $N^{i,j}$ is a one dimensional space $\C v^{i,j}$.
Then $N^{i,j}$ has a unique maximal submodule, possibly $0$. Consider the
quotient module $U^{i,j}$ of $N^{i,j}$ by its unique maximal submodule. It
is an irreducible $\W$-module with top level $\C v^{i,j}$. By Proposition
\ref{prop:action-on-vij}, we know how $o(\omega)$, $o(W^3)$, $o(W^4)$ and
$o(W^5)$ act on $\C v^{i,j}$. We can verify that the $15$ quartets of the
eigenvalues of these four operators on $\C v^{i,j}$ are all different and
that they agree with the solutions $(w_2,w_3,w_4,w_5)$ of a system of
equations
\begin{equation}\label{eq:system-of-equations5}
R_1 = R_2 = R_3 = R_4 = R_5 = 0.
\end{equation}

By \cite[Theorem 2.2.2]{Zhu}, we conclude that $A(\W) \cong
\C[w_2,w_3,w_4,w_5]/\CP$ and any irreducible $\W$-module is isomorphic to
one of $U^{i,j}$'s. Furthermore, $\C[w_2,w_3,w_4,w_5]/\CP$ is semisimple,
for the system of equations \eqref{eq:system-of-equations5} has no multiple
root. Hence $A(\W)$ is semisimple. Note that $A(\W)$ is generated by
$[\omega]$ and $[W^3]$. This is consistent with the fact that the $15$
pairs of the eigenvalues of $o(\omega)$ and $o(W^3)$ on $\C v^{i,j}$ are
all different. That is, $o(\omega)$ and $o(W^3)$ are sufficient to
distinguish $U^{i,j}$'s.

Now, suppose $\W \ne M^{0,0}$ and consider the quotient $\W$-module
$M^{0,0}/\W$. It has integral weights. We can easily verify that the weight
$n$ subspace of $\W$ coincides with that of $M^{0,0}$ for a small $n$, say
$n = 0,1,2$. Therefore, the weight of any irreducible quotient of
$M^{0,0}/\W$ is greater that $2$. This is a contradiction since the weight
of the top level of $U^{i,j}$ is at most $6/5$ and $0$ is the only integral
one. Thus $\W = M^{0,0}$ and the assertion (2) of Conjecture
\ref{Conj:W-properties} is true for $k=5$.

It remains to prove the $C_2$-cofiniteness and the rationality of $\W$. We
have studied $\bu^r$, $r=0,1,2,3$ and the null fields $\bv^0$ and $\bv^1$
modulo $O(\tW)$ for the determination of Zhu's algebra of $\W$. As to the
proof of the $C_2$-cofiniteness, we consider $\bu^r$, $r=0,1,2,3$ and the
null fields $\bv^0$, $\bv^1$ and $\bv^2$ modulo $C_2(\tW)$. In fact,
$\bu^r$, $r=0,1,2,3$, $\bv^0$ and $\bv^1$ are not sufficient to show the
$C_2$-cofiniteness. Take the image in $\tW/C_2(\tW)$ of the right hand side
of the expression of $\bu^r$ as a linear combination of vectors of normal
form given in Appendix \ref{App:Case-k-5-singular-vec}. It is a polynomial
in $\omega + C_2(\tW)$ and $W^i + C_2(\tW)$, $i=3,4,5$. Replace $\omega +
C_2(\tW)$ and $W^i + C_2(\tW)$ with $x_2$ and $x_i$, $i=3,4,5$,
respectively in the polynomial and multiply it by a suitable integer. Let
$A_r$, $r=0,1,2,3$ be the polynomial obtained from $\bu^r$ in this manner.
Then
\begin{align*}
A_0 &=
82418000 x_2^3+1365 x_4 x_2-194 x_3^2,\\
A_1 &=
730976400 x_3 x_2^2-479180 x_5 x_2-7137 x_4 x_3,\\
A_2 &=
519970178910210000 x_2^4+8403180446500 x_4 x_2^2-1231205050775 x_3^2 x_2\\
& \quad -33957081 x_5 x_3-1437404 x_4^2,\\
A_3 &=
8531538341629506000 x_3 x_2^3+6990074602966000 x_5 x_2^2\\
& \quad +433503066092460 x_4 x_3 x_2-46312512741411 x_3^3-3246519796 x_5 x_4.
\end{align*}

Let $C_2(\W)$ be the subspace of $\W$ spanned by $u_{-2}v$ with $u,v \in
\W$. Since $\W$ is a homomorphic image of $\tW$, there is a homomorphism
from $\tW/C_2(\tW)$ onto $\W/C_2(\W)$. Its composition $\rho$ with the
surjective homomorphism $\tilde{\rho} : \C[x_2,x_3,x_4,x_5] \rightarrow
\tW/C_2(\tW)$ discussed in Section \ref{Sect:W2345} is a homomorphism from
$\C[x_2,x_3,x_4,x_5]$ onto $\W/C_2(\W)$. The kernel of $\rho$ contains the
polynomials $B_0$, $B_1$ and $B_2$ studied in Section \ref{Sect:W2345} with
$k=5$. It also contains the above four polynomials $A_r$, $r=0,1,2,3$, for
$\bu^r$ is $0$ in $\W$. Let $\CA$ be the ideal of $\C[x_2,x_3,x_4,x_5]$
generated by $A_r$, $r=0,1,2,3$ and $B_s$, $s=0,1,2$ with $k=5$. We can
verify that a Gr\"{o}bner basis of $\CA$ consists of the eleven
polynomials
\begin{align*}
S_1 &= x_2^6,\\
S_2 &= x_3 x_2^4,\\
S_3 &= 2780750 x_2^5-29 x_3^2 x_2^2,\\
S_4 &= 378000 x_3 x_2^3-x_3^3,\\
S_5 &= 82418000 x_2^3+1365 x_4 x_2-194 x_3^2,\\
S_6 &= 33674025000 x_2^5+377 x_4 x_3^2,\\
S_7 &= 804763750000 x_2^4+61327280 x_3^2 x_2-2379 x_4^2,\\
S_8 &= 730976400 x_3 x_2^2-479180 x_5 x_2-7137 x_4 x_3,\\
S_9 &= 4633930000 x_2^4-28315 x_3^2 x_2-13 x_5 x_3,\\
S_{10} &= 2018093000 x_3 x_2^3+13 x_5 x_4,\\
S_{11} &= 173625253725000 x_2^5-377 x_5^2.
\end{align*}

From $S_1$, $S_4$, $S_7$ and $S_{11}$, we see that
$\C[x_2,x_3,x_4,x_5]/\CA$ is finite dimensional. This establishes the
$C_2$-cofiniteness of $\W$ for $k=5$.

The set of eigenvalues of the action of $o(\omega)$ on $\C v^{i,j}$,  $0
\le i \le 5$, $0 \le j \le i-1$ is
\begin{equation*}
\CE = \{ 0, 2/35, 3/35, 2/7, 17/35, 23/35, 6/7, 4/5, 6/5\}.
\end{equation*}
The difference of any two rational numbers in the set $\CE$ is not an
integer. Thus by a similar argument as in the proof of \cite[Lemma
5.13]{DLTYY}, we have that any $\W$-module generated by an irreducible
$A(\W)$-module is irreducible. Hence the vertex operator algebra $\W$ is
rational by \cite[Proposition 5.11]{DLTYY}. Thus the assertion (3) of
Conjecture \ref{Conj:W-properties} is true for $k=5$.

\subsection{Case $k=6$}\label{subsec:k-is-6}
The argument is essentially the same as in the case $k = 5$. The six
singular vectors $\bu^0$, $\bu^1$, $\bu^2$, $\bu^3$, $\bv^0$ and $\bv^1$
are sufficient for the determination of Zhu's algebra $A(\W)$, while we
need one more singular vector $\bv^2$ for the proof of the
$C_2$-cofiniteness of $\W$. Indeed, we have
\begin{equation*}
\begin{split}
\bu^0 &= -(1420529376000/55483) \omega_{-3}W^3_{-1}\1
+ (1356106752000/55483) (\omega_{-1})^2W^3_{-1}\1\\
& \quad + (19141808000/491) \omega_{-2}W^3_{-2}\1
- (2212337344000/55483) \omega_{-1}W^3_{-3}\1\\
& \quad + (2043429304000/55483) W^3_{-5}\1 - (33950/339) W^3_{-1}W^4_{-1}\1\\
& \quad - (4632320/491) \omega_{-1}W^5_{-1}\1 - (1995840/491) W^5_{-3}\1
\end{split}
\end{equation*}
in the case $k=6$. In particular, $\bu^0 \in \tW^-$.

Set $\bu^r = (W^3_1)^r \bu^0$, $r = 1,2,3$ as in Section
\ref{subsec:k-is-5}. We consider the polynomial $P_r \in
\C[w_2,w_3,w_4,w_5]$ obtained from $\bu^r$ in a similar manner as in the
case $k=5$. Let $\CP$ be the ideal of $\C[w_2,w_3,w_4,w_5]$ generated by
the six polynomials $P_r$, $r=0,1,2,3$, $Q_0$ and $Q_1$ with $k=6$. We
calculate that a Gr\"{o}bner basis of $\CP$ consists of the following five
polynomials.
\begin{align*}
R_1 &= w_2(2w_2-3)(3w_2-4)(4w_2-3)(4w_2-1)(6w_2-5)(12w_2-7)(12w_2-1)\\
& \qquad \quad \cdot (32w_2-23)(32w_2-3)(96w_2-101)(96w_2-41)(96w_2-5),\\
R_2 &= w_3(3w_2-4)(6w_2-5)(12w_2-7)(12w_2-1)\\
& \qquad \quad \cdot (32w_2-23)(96w_2-101)(96w_2-41)(96w_2-5),\\
R_3 &= p(w_2) + 2399941984319748410712448453175 w_3^2,\\
R_4 &= q(w_2) + 59998549607993710267811211329375 w_4,\\
R_5 &= r(w_2)w_3 + 171818959801082568975 w_5,
\end{align*}
where $p(w_2)$, $q(w_2)$ and $r(w_2)$ are polynomials in $w_2$ of degree
$12$, $12$ and $7$, respectively. The common factor of $R_1$ and $p(w_2)$
is $w_2(2w_2-3)(4w_2-3)(4w_2-1)(32w_2-3)$ and that of $R_1$ and $q(w_2)$ is
$w_2$, while $r(w_2)$ has no common factor with $R_1$. The Gr\"{o}bner
basis implies that $\C[w_2,w_3,w_4,w_5]/\CP$ is a $21$ dimensional space
with basis $w_2^m + \CP$, $0 \le m \le 12$, $w_2^n w_3 + \CP$, $0 \le n \le
7$.

In the case $k = 6$, we have $21$ inequivalent irreducible
$M^{0,0}$-modules $M^{i,j}$. Consider an irreducible subquotient $U^{i,j}$
of $M^{i,j}$ as in the case $k = 5$. The $21$ quartets of the eigenvalues
of $o(\omega)$, $o(W^3)$, $o(W^4)$ and $o(W^5)$ on $\C v^{i,j}$ are all
different and they agree with the solutions $(w_2,w_3,w_4,w_5)$ of a system
of equations $R_1 = R_2 = R_3 = R_4 = R_5 = 0$. Thus $A(\W)$ is isomorphic
to $\C[w_2,w_3,w_4,w_5]/\CP$. Moreover, it is semisimple and generated by
$[\omega]$ and $[W^3]$.

The set of eigenvalues of $o(\omega)$ on $\C v^{i,j}$, $0 \le i \le 6$, $0
\le j \le i-1$ is
\begin{equation*}
\CE = \{ 0, 5/96, 1/12, 1/4, 3/32, 41/96, 7/12, 3/4, 23/32, 101/96, 5/6,
4/3, 3/2\}.
\end{equation*}
Thus the weight of $v^{i,j}$ is at most $3/2$ and $0$ is the only integral
one. By a similar argument as in the case $k = 5$, we have that $\W =
M^{0,0}$ and the assertion (2) of Conjecture \ref{Conj:W-properties} holds.

For the proof of the $C_2$-cofiniteness of $\W$, we use $\bu^r$,
$r=0,1,2,3$ and $\bv^s$, $s=0,1,2$. Consider seven polynomials $A_r$,
$r=0,1,2,3$ and $B_s$, $s=0,1,2$ obtained in a similar manner as in the
case $k=5$. We can verify that the ideal $\CA$ generated by these seven
polynomials is of finite codimension in $\C[x_2,x_3,x_4,x_5]$. Thus $\W$ is
$C_2$-cofinite.

Let $U$ be an irreducible $A(\W)$-module. Then $U = \C u$ is one
dimensional and $L(0)u = \lambda u$ for some $\lambda \in \CE$. We want to
show that any $\W$-module $M$ generated by $U$ is irreducible (see
\cite[Lemma 5.13]{DLTYY}). If $\lambda \ne 5/96$, then $\CE \cap (\lambda +
\Z) = \{ \lambda \}$ and so there is no singular vector for $\W$ of weight
greater than $\lambda$ in $M$. By a similar argument as in the proof of
\cite[Lemma 5.13]{DLTYY}, we obtain that $M$ is an irreducible $\W$-module.

Suppose $\lambda = 5/96$. We can assume that $u = v^{1,0}$ or $u =
v^{5,0}$. The $\W$-module $M$ is spanned by the elements of the form
\eqref{eq:normal-form3}. Thus the weight $\lambda + 1$ subspace of $M$ is
spanned by $L(-1)u$, $W^p(-1)u$, $p = 3,4,5$. Let
\begin{equation*}
v = c_1 L(-1)u + c_2 W^3(-1)u + c_3 W^4(-1)u + c_4 W^5(-1)u
\end{equation*}
be an element of $M$ of weight $\lambda + 1$.

We first consider the case $u = v^{1,0}$. Then $L(0)$, $W^3(0)$, $W^4(0)$
and $W^5(0)$ act on $u$ as $5/96$, $20$, $780$ and $-1560$, respectively.
We study $L(1)v$ and $W^p(1)v$, $p = 3,4,5$. Each of these vectors is a
scalar multiple of $u$. Let $L(1)v = \eta_1 u$, $W^p(1)v = \eta_{p-1} u$,
$p = 3,4,5$. Using the expression of $W^i_n W^j$, $3 \le i \le j \le 5$, $0
\le n \le i + j -1$ as a linear combination of vectors of normal form given
in Appendix \ref{App:OPE}, together with basic formulas
\eqref{eq:basic-formula1} and \eqref{eq:basic-formula2}, we can determine
the constant $\eta_p$, $p = 1,2,3,4$. In fact, a suitable constant multiple
$F_p$ of $\eta_p$ is as follows.
\begin{align*}
F_1 &= c_1+576c_2+29952c_3-74880c_4,\\
F_2 &= 113c_1+65088c_2+3384576c_3+400721629860c_4,\\
F_3 &= 13c_1+7488c_2+13498935756c_3-973440c_4,\\
F_4 &=6217083815033c_1+169600079666412680418c_2\\
& \quad +186214094427868416c_3-62241257449122360326409060c_4.
\end{align*}

We can verify that a system of equations $F_1 = F_2 = F_3 = F_4 = 0$ has
only the trivial solution $c_1 = c_2 = c_3 = c_4 = 0$. That is, there is no
nonzero vector $v$ of weight $\lambda + 1$ in $M$ such that $L(1)v =0$ and
$W^p(1)v = 0$, $p = 3,4,5$. Since $\CE \cap (\lambda + \Z) = \{\lambda,
\lambda+1\}$, this implies that $M$ has no singular vector of weight
greater than $\lambda$. Hence $M$ is irreducible.

Next, we deal with the case $u = v^{5,0}$. The operators $L(0)$, $W^3(0)$,
$W^4(0)$ and $W^5(0)$ act on $u$ as $5/96$, $-20$, $780$ and $1560$,
respectively. We only need to replace $c_3$ with $-c_3$ and $c_5$ with
$-c_5$ in $F_1$, $F_2$, $F_3$ and $F_4$, and we obtain that $M$ is
irreducible.

Now, we can apply \cite[Proposition 3.11]{DLTYY} to conclude that the
vertex operator algebra $\W$ is rational. Thus the assertion (3) of
Conjecture \ref{Conj:W-properties} is true for $k = 6$.

\section*{Acknowledgments}
The authors would like to thank Toshiyuki Abe, Tomoyuki Arakawa, Atsushi
Matsuo and Hiroshi Yamauchi for helpful advice. Part of our computation was
done by a computer algebra system Risa/Asir. The authors are indebted to
Kazuhiro Yokoyama for a lot of advice concerning the system. H. Y. is
grateful to Lachezar S. Georgiev and Victor G. Kac for valuable discussions
on the subject. Chongying Dong was partially supported by NSF grants and a
research grant from University of California at Santa Cruz, Ching Hung Lam
was partially supported by  NSC grant 95-2115-M-006-013-MY2 of Taiwan,
Hiromichi Yamada was partially supported by JSPS Grant-in-Aid for
Scientific Research No. 20540012.

\appendix

\section{Virasoro primary vectors $W^3$, $W^4$ and $W^5$}
\label{App:def-W3-W4-W5}
{\footnotesize
\begin{equation*}
\begin{split}
W^3 &= k^2 h(-3)\1 + 3 k h(-2)h(-1)\1 +
2h(-1)^3\1 - 6k h(-1)e(-1)f(-1)\1 \\
& \quad + 3 k^2e(-2)f(-1)\1 - 3 k^2e(-1)f(-2)\1,
\end{split}
\end{equation*}

\begin{equation*}
\begin{split}
W^4 &= -2 k^2 (k^2+k+1) h(-4)\1 -8 k (k^2+k+1)h(-3)h(-1)\1 -k (5
k^2-6)h(-2)^2\1 \\
& \quad -2 k (11 k+6)h(-2)h(-1)^2\1 -(11 k+6)h(-1)^4\1 +4
k^2 (6 k-5)h(-2)e(-1)f(-1)\1 \\
& \quad +4 k (11 k+6)h(-1)^2e(-1)f(-1)\1 -4 k^2 (5
k+11)h(-1)e(-2)f(-1)\1 \\
& \quad +4 k^2 (5 k+11)h(-1)e(-1)f(-2)\1 +8 k^2 (k-3)
(k-2)e(-3)f(-1)\1 \\
& \quad -4 k^2 (3 k^2-3 k+8)e(-2)f(-2)\1 -2 k^2 (6
k-5)e(-1)^2f(-1)^2\1 \\
& \quad +8 k^2 (k^2+k+1)e(-1)f(-3)\1,
\end{split}
\end{equation*}

\begin{equation*}
\begin{split}
W^5 &= -2 k^3 (k^2+3 k+5)h(-5)\1 -10 k^2 (k^2+3 k+5)h(-4)h(-1)\1\\
& \quad -5 k^2 (3 k^2-4)h(-3)h(-2)\1 -5 k (7 k^2+12 k+16)h(-3)h(-1)^2\1\\
& \quad -15 k (3 k^2-4)h(-2)^2h(-1)\1 -5 k (19 k+12)h(-2)h(-1)^3\1 -2 (19
k+12)h(-1)^5\1 \\
& \quad +10 k^2 (4 k^2-7 k+8)h(-3)e(-1)f(-1)\1 +20 k^2 (10
k-7)h(-2)h(-1)e(-1)f(-1)\1 \\
& \quad +10 k (19 k+12)h(-1)^3e(-1)f(-1)\1 -5 k^2 (11
k^2-14 k+12)h(-2)e(-2)f(-1)\1 \\
& \quad -5 k^2 (17 k+64)h(-1)^2e(-2)f(-1)\1 +15 k^2 (3
k^2-4)h(-2)e(-1)f(-2)\1 \\
& \quad +5 k^2 (17 k+64)h(-1)^2e(-1)f(-2)\1 +30 k^2 (k-4)
(k-3)h(-1)e(-3)f(-1)\1 \\
& \quad -40 k^2 (k^2+3 k+5)h(-1)e(-2)f(-2)\1 -10 k^2 (10
k-7)h(-1)e(-1)^2f(-1)^2\1 \\
& \quad +10 k^2 (3 k^2+19 k+8)h(-1)e(-1)f(-3)\1 -10 k^3
(k-4) (k-3)e(-4)f(-1)\1 \\
& \quad +20 k^3 (k-4) (k-3)e(-3)f(-2)\1 +5 k^3 (10
k-7)e(-2)e(-1)f(-1)^2\1 \\
& \quad -10 k^3 (2 k^2-4 k+17)e(-2)f(-3)\1 -5 k^3 (10
k-7)e(-1)^2f(-2)f(-1)\1 \\
& \quad +10 k^3 (k^2+3 k+5)e(-1)f(-4)\1.
\end{split}
\end{equation*}
}

\section{$W^i_nW^j$, $3 \le i \le j \le 5$, $0 \le n \le i+j-1$}
\label{App:OPE}

\subsection{$W^3_n W^3$, $0 \le n \le 5$}
{\footnotesize
\begin{equation*}
W^3_5 W^3 = 12 k^3 (k-2) (k-1) (3 k+4) \1,
\end{equation*}
\begin{equation*}
W^3_4 W^3 = 0,
\end{equation*}
\begin{equation*}
W^3_3 W^3 = 36 k^3 (k-2) (k+2) (3 k+4) \omega_{-1}\1,
\end{equation*}
\begin{equation*}
W^3_2 W^3 = 18 k^3 (k-2) (k+2) (3 k+4) \omega_{-2}\1,
\end{equation*}
\begin{equation*}
\begin{split}
W^3_1 W^3 &=
-\big( 162 k^3 (k-2) (k+2) (3 k+4)/(16 k+17)\big) \omega_{-3}\1 \\
& \quad + \big(288 k^3 (k-2) (k+2)^2 (3 k+4)/(16 k+17)\big) \omega_{-1}\omega_{-1}\1\\
& \quad + \big(36 k (2 k+3)/(16 k+17)\big) W^4_{-1}\1,
\end{split}
\end{equation*}
\begin{equation*}
\begin{split}
W^3_0 W^3 &=
- \big(108 k^3 (k-2) (k+2) (3 k+4)/(16 k+17)\big) \omega_{-4}\1 \\
& \quad + \big(288 k^3 (k-2) (k+2)^2 (3 k+4)/(16 k+17)\big) \omega_{-2}\omega_{-1}\1\\
& \quad + \big(18 k (2 k+3)/(16 k+17)\big) W^4_{-2}\1.
\end{split}
\end{equation*}
}

\subsection{$W^3_n W^4$, $0 \le n \le 6$}
{\footnotesize
\begin{equation*}
W^3_6 W^4 = W^3_5 W^4 = W^3_4 W^4 = 0,
\end{equation*}
\begin{equation*}
W^3_3 W^4 = 48 k^2 (k-3) (2 k+1) (2 k+3) W^3_{-1}\1,
\end{equation*}
\begin{equation*}
W^3_2 W^4 = 16 k^2 (k-3) (2 k+1) (2 k+3) W^3_{-2}\1,
\end{equation*}
\begin{equation*}
\begin{split}
W^3_1 W^4 &=
\big(1248 k^2 (k-3) (k+2) (2 k+1) (2 k+3)/(64 k+107)\big) \omega_{-1}W^3_{-1}\1\\
& \quad - \big(48 k^2 (k-3) (2 k+1) (2 k+3) (2 k+7)/(64 k+107)\big) W^3_{-3}\1\\
& \quad - \big(12 k (3 k+4) (16 k+17)/(64 k+107)\big) W^5_{-1}\1,
\end{split}
\end{equation*}
\begin{equation*}
\begin{split}
W^3_0 W^4 &=
\big(120 k^2 (k-3) (k+2) (2 k+3) (16 k+17)/(64 k+107)\big) \omega_{-2}W^3_{-1}\1\\
& \quad + \big(48 k^2 (k-3) (k+2) (2 k+3) (8 k-11)/(64 k+107)\big) \omega_{-1}W^3_{-2}\1\\
& \quad - \big(12 k^2 (k-3) (2 k+3) (32 k^2+47 k-52)/(64 k+107)\big) W^3_{-4}\1\\
& \quad - \big(24 k (3 k+4) (16 k+17)/(5(64 k+107))\big) W^5_{-2}\1.
\end{split}
\end{equation*}
}

\subsection{$W^3_n W^5$, $0 \le n \le 7$}
{\footnotesize
\begin{equation*}
W^3_7 W^5 = W^3_6 W^5 = W^3_5 W^5 = W^3_4 W^5 = 0,
\end{equation*}
\begin{equation*}
W^3_3 W^5 = -30 k^2 (k-4) (5 k+8) W^4_{-1}\1,
\end{equation*}
\begin{equation*}
W^3_2 W^5  = -(15/2) k^2 (k-4) (5 k+8) W^4_{-2}\1,
\end{equation*}
\begin{equation*}
\begin{split}
W^3_1 W^5 &=
- \big(6480 k^4 (k+2) (k+3) (2 k+3) (3 k+4) (12 k^2+8 k-17)/(16 k+17)\big) \omega_{-5}\1\\
& \quad - \big(360 k^4 (k+2)^2 (2 k+3) (3 k+4) (32 k^2+797 k+863)/(16 k+17)\big) \omega_{-3}\omega_{-1}\1\\
& \quad + \big(45 k^4 (k+2)^2 (2 k+3) (3 k+4) (1408 k^2+1315 k-977)/(16 k+17)\big) \omega_{-2}\omega_{-2}\1\\
& \quad + \big(240 k^4 (k+2)^3 (2 k+3) (3 k+4) (202 k-169)/(16 k+17)\big) \omega_{-1}\omega_{-1}\omega_{-1}\1\\
& \quad - 15 k (2 k+3) (41 k+61) W^3_{-1}W^3_{-1}\1\\
& \quad + \big(60 k^2 (k+2) (404 k^2+1170 k+835)/(16 k+17)\big) \omega_{-1}W^4_{-1}\1\\
& \quad + \big(15 k^2 (2176 k^3+9481 k^2+13792 k+6708)/(2(16 k+17))\big)
W^4_{-3}\1,
\end{split}
\end{equation*}
\begin{equation*}
\begin{split}
W^3_0 W^5 &=
- \big(3240 k^4 (k+2) (k+3) (2 k+3) (3 k+4) (12 k^2+8 k-17)/(16 k+17)\big) \omega_{-6}\1\\
& \quad - \big(120 k^4 (k+2)^2 (2 k+3) (3 k+4) (184 k^2+1669 k+1801)/(16 k+17)\big) \omega_{-4}\omega_{-1}\1\\
& \quad + \big(2700 k^4 (k+2)^2 (2 k+3) (3 k+4) (8 k^2+5 k+5)/(16 k+17)\big) \omega_{-3}\omega_{-2}\1\\
& \quad + \big(240 k^4 (k+2)^3 (2 k+3) (3 k+4) (202 k-169)/(16 k+17)\big) \omega_{-2}\omega_{-1}\omega_{-1}\1\\
& \quad - 10 k (2 k+3) (41 k+61) W^3_{-2}W^3_{-1}\1\\
& \quad + \big(60 k^2 (k+2) (2 k+3) (64 k+107)/(16 k+17)\big) \omega_{-2}W^4_{-1}\1\\
& \quad + \big(60 k^2 (k+2) (138 k^2+382 k+257)/(16 k+17)\big) \omega_{-1}W^4_{-2}\1\\
& \quad + \big(15 k^2 (104 k^3+317 k^2+308 k+108)/(16 k+17)\big)
W^4_{-4}\1.
\end{split}
\end{equation*}
}

\subsection{$W^4_n W^4$, $0 \le n \le 7$}
{\footnotesize
\begin{equation*}
W^4_7 W^4 = 16 k^4 (k-3) (k-2) (k-1) (2 k+1) (3 k+4) (16 k+17) \1,
\end{equation*}
\begin{equation*}
W^4_6 W^4 = 0,
\end{equation*}

\begin{equation*}
W^4_5 W^4 = 64 k^4 (k-3) (k-2) (k+2) (2 k+1) (3 k+4) (16 k+17)
\omega_{-1}\1,
\end{equation*}

\begin{equation*}
W^4_4 W^4 = 32 k^4 (k-3) (k-2) (k+2) (2 k+1) (3 k+4) (16 k+17)
\omega_{-2}\1,
\end{equation*}

\begin{equation*}
\begin{split}
W^4_3 W^4 &=
-96 k^4 (k-3) (k-2) (k+2) (k+5) (2 k+1) (3 k+4) \omega_{-3}\1\\
& \quad +672 k^4 (k-3) (k-2) (k+2)^2 (2 k+1) (3 k+4) \omega_{-1}\omega_{-1}\1\\
& \quad +72 k^2 (4 k^3-15 k^2-33 k-4) W^4_{-1}\1,
\end{split}
\end{equation*}

\begin{equation*}
\begin{split}
W^4_2 W^4 &=
-64 k^4 (k-3) (k-2) (k+2) (k+5) (2 k+1) (3 k+4) \omega_{-4}\1\\
& \quad +672 k^4 (k-3) (k-2) (k+2)^2 (2 k+1) (3 k+4) \omega_{-2}\omega_{-1}\1\\
& \quad +36 k^2 (4 k^3-15 k^2-33 k-4) W^4_{-2}\1,
\end{split}
\end{equation*}

\begin{equation*}
\begin{split}
W^4_1 W^4 &=
1920 k^4 (k+2) (2 k+3) (3 k+4) (4 k^3+12 k^2-4 k-9) \omega_{-5}\1\\
& \quad +32 k^4 (k+2)^2 (3 k+4) (62 k^3+1869 k^2+3337 k+1524) \omega_{-3}\omega_{-1}\1\\
& \quad -40 k^4 (k+2)^2 (3 k+4) (316 k^3+534 k^2+29 k-165) \omega_{-2}\omega_{-2}\1\\
& \quad -320 k^4 (k+2)^3 (3 k+4) (5 k+4) (6 k-5) \omega_{-1}\omega_{-1}\omega_{-1}\1\\
& \quad +8 k (k+1) (16 k+17)^2 W^3_{-1}W^3_{-1}\1\\
& \quad -48 k^2 (k+2) (52 k^2+109 k+50) \omega_{-1}W^4_{-1}\1\\
& \quad -4 k^2 (412 k^3+1503 k^2+1753 k+640) W^4_{-3}\1,
\end{split}
\end{equation*}

\begin{equation*}
\begin{split}
W^4_0 W^4 &=
1440 k^4 (k+2) (2 k+3) (3 k+4) (4 k^3+12 k^2-4 k-9) \omega_{-6}\1\\
& \quad +96 k^4 (k+2)^2 (3 k+4) (11 k+13) (6 k^2+55 k+41) \omega_{-4}\omega_{-1}\1\\
& \quad -48 k^4 (k+2)^2 (3 k+4) (136 k^3+180 k^2+161 k+75) \omega_{-3}\omega_{-2}\1\\
& \quad -480 k^4 (k+2)^3 (3 k+4) (5 k+4) (6 k-5) \omega_{-2}\omega_{-1}\omega_{-1}\1\\
& \quad +8 k (k+1) (16 k+17)^2 W^3_{-2}W^3_{-1}\1\\
& \quad -24 k^2 (k+2) (52 k^2+109 k+50) \omega_{-2}W^4_{-1}\1\\
& \quad -24 k^2 (k+2) (52 k^2+109 k+50) \omega_{-1}W^4_{-2}\1\\
& \quad -12 k^2 (20 k^3+45 k^2+29 k+8) W^4_{-4}\1.
\end{split}
\end{equation*}
}

\subsection{$W^4_n W^5$, $0 \le n \le 8$}
{\footnotesize
\begin{equation*}
W^4_8 W^5 = W^4_7 W^5 = W^4_6 W^5 = 0,
\end{equation*}
\begin{equation*}
W^4_5 W^5 = -40 k^3 (k-4) (k-3) (2 k+1) (5 k+8) (16 k+17) W^3_{-1}\1,
\end{equation*}
\begin{equation*}
W^4_4 W^5 = -(40/3) k^3 (k-4) (k-3) (2 k+1) (5 k+8) (16 k+17) W^3_{-2}\1,
\end{equation*}
\begin{equation*}
\begin{split}
W^4_3 W^5 & =
- \big(1320 k^3 (k-4) (k-3) (k+2) (2 k+1) (5 k+8) (16 k+17)/(64 k+107)\big) \omega_{-1}W^3_{-1}\1\\
& \quad + \big(40 k^3 (k-4) (k-3) (2 k+1) (5 k+8) (5 k+13) (16 k+17)/(64 k+107)\big) W^3_{-3}\1\\
& \quad + \big(180 k^2 (3 k+4) (32 k^3-236 k^2-535 k-125)/(64 k+107)\big)
W^5_{-1}\1,
\end{split}
\end{equation*}
\begin{equation*}
\begin{split}
W^4_2 W^5 &=
- \big(160 k^3 (k-4) (k-3) (k+2) (5 k+8) (13 k+14) (16 k+17)/(64 k+107)\big) \omega_{-2}W^3_{-1}\1\\
& \quad - \big( (80/3) k^3 (k-4) (k-3) (k+2) (5 k+8) (14 k-23) (16 k+17)/(64 k+107)\big) \omega_{-1}W^3_{-2}\1\\
& \quad + \big(60 k^3 (k-4) (k-3) (5 k+8) (16 k+17) (8 k^2+12 k-11)/(64 k+107)\big) W^3_{-4}\1\\
& \quad + \big(72 k^2 (3 k+4) (32 k^3-236 k^2-535 k-125)/(64 k+107)\big)
W^5_{-2}\1,
\end{split}
\end{equation*}
\begin{equation*}
\begin{split}
W^4_1 W^5 &=
\big(20 k^3 (k+2) (5 k+8) (2624 k^4-83108 k^3-341706 k^2-433511 k -177319)/(64 k+107)\big) \omega_{-3}W^3_{-1}\1\\
& \quad + \big(120 k^3 (k+2)^2 (2 k+1) (5 k+8) (16 k-9) (75 k+74)/(64 k+107)\big) \omega_{-1}\omega_{-1}W^3_{-1}\1\\
& \quad + \big((10/3) k^3 (k+2) (5 k+8) (16 k+17) (7960 k^3+18296 k^2+6119 k -4457)/(64 k+107)\big) \omega_{-2}W^3_{-2}\1\\
& \quad - \big((40/3) k^3 (k+2) (5 k+8) (28800 k^4+128704 k^3+133404 k^2-43341 k -76171)\\
& \qquad\qquad /(64 k+107)\big) \omega_{-1}W^3_{-3}\1\\
& \quad + \big((20/3) k^3 (5 k+8) (45440 k^5+358008 k^4+884944 k^3+619369 k^2 -360351 k-404824)\\
& \qquad\qquad /(64 k+107)\big) W^3_{-5}\1\\
& \quad -10 k (5 k+8) (20 k+19) W^3_{-1}W^4_{-1}\1\\
& \quad - \big(40 k^2 (k+2) (3 k+4) (1168 k^2+2584 k+1171)/(64 k+107)\big) \omega_{-1}W^5_{-1}\1\\
& \quad - \big(120 k^2 (2 k+3) (3 k+4) (88 k^2+202 k+97)/(64 k+107)\big)
W^5_{-3}\1,
\end{split}
\end{equation*}

\begin{equation*}
\begin{split}
W^4_0 W^5 &=
- \big(60 k^3 (k+2) (5 k+8) (16 k+17) (236 k^3+2566 k^2+5577 k+3207)/(64 k+107)\big) \omega_{-4}W^3_{-1}\1\\
& \quad + \big(2640 k^3 (k+1) (k+2)^2 (5 k+8) (10 k-7) (16 k+17)/(64 k+107)\big) \omega_{-2}\omega_{-1}W^3_{-1}\1\\
& \quad + \big(40 k^3 (k+2) (5 k+8) (6976 k^4+26048 k^3+28428 k^2+2426 k -6881)/(64 k+107)\big) \omega_{-3}W^3_{-2}\1\\
& \quad + \big(40 k^3 (k+2)^2 (5 k+8) (160 k^3-2938 k^2+215 k+3238)/(64 k+107)\big) \omega_{-1}\omega_{-1}W^3_{-2}\1\\
& \quad - \big(20 k^3 (k+2) (5 k+8) (16 k+17) (60 k^3+202 k^2-305 k-407)/(64 k+107)\big) \omega_{-2}W^3_{-3}\1\\
& \quad - \big(120 k^3 (k+2) (5 k+8) (16 k+17) (150 k^3+407 k^2+93 k-149)/(64 k+107)\big) \omega_{-1}W^3_{-4}\1\\
& \quad + \big(20 k^3 (5 k+8) (26368 k^5+171408 k^4+368012 k^3+239153 k^2-101871 k -117440)\\
& \qquad\qquad /(64 k+107)\big) W^3_{-6}\1\\
& \quad -10 k (5 k+8) (20 k+19) W^3_{-2}W^4_{-1}\1\\
& \quad - \big(240 k^2 (k+2) (2 k+3) (180 k^2+397 k+179)/(64 k+107)\big) \omega_{-2}W^5_{-1}\1\\
& \quad - \big(24 k^2 (k+2) (2064 k^3+7088 k^2+7653 k+2536)/(64 k+107)\big) \omega_{-1}W^5_{-2}\1\\
& \quad - \big(12 k^2 (16 k+17) (36 k^3+107 k^2+107 k+44)/(64 k+107)\big)
W^5_{-4}\1.
\end{split}
\end{equation*}
}

\subsection{$W^5_n W^5$, $0 \le n \le 9$}
{\footnotesize
\begin{equation*}
W^5_9 W^5 = 40 k^5 (k-4) (k-3) (k-2) (k-1) (2 k+1) (5 k+8) (64 k+107)\1,
\end{equation*}
\begin{equation*}
W^5_8 W^5 = 0,
\end{equation*}
\begin{equation*}
W^5_7 W^5 = 200 k^5 (k-4) (k-3) (k-2) (k+2) (2 k+1) (5 k+8) (64 k+107)
\omega_{-1}\1,
\end{equation*}
\begin{equation*}
W^5_6 W^5 = 100 k^5 (k-4) (k-3) (k-2) (k+2) (2 k+1) (5 k+8) (64 k+107)
\omega_{-2}\1,
\end{equation*}
\begin{equation*}
\begin{split}
W^5_5 W^5 &=
- \big(300 k^5 (k-4) (k-3) (k-2) (k+2) (2 k+1) (2 k+7) (5 k+8) (64 k+107) /(16 k+17)\big) \omega_{-3}\1\\
& \quad + \big(2600 k^5 (k-4) (k-3) (k-2) (k+2)^2 (2 k+1) (5 k+8) (64 k+107)/(16 k+17)\big) \omega_{-1}\omega_{-1}\1\\
& \quad + \big(450 k^3 (k-4) (5 k+8) (32 k^3-236 k^2-535 k-125)/(16
k+17)\big) W^4_{-1}\1,
\end{split}
\end{equation*}

\begin{equation*}
\begin{split}
W^5_4 W^5 &=
- \big(200 k^5 (k-4) (k-3) (k-2) (k+2) (2 k+1) (2 k+7) (5 k+8) (64 k+107)/(16 k+17)\big) \omega_{-4}\1\\
& \quad + \big(2600 k^5 (k-4) (k-3) (k-2) (k+2)^2 (2 k+1) (5 k+8) (64 k+107)/(16 k+17)\big) \omega_{-2}\omega_{-1}\1\\
& \quad + \big(225 k^3 (k-4) (5 k+8) (32 k^3-236 k^2-535 k-125)/(16
k+17)\big) W^4_{-2}\1,
\end{split}
\end{equation*}

\begin{equation*}
\begin{split}
W^5_3 W^5 &=
\big(400 k^5 (k+2) (9728 k^7-345370 k^6-2884229 k^5-7339690 k^4-5652707 k^3 +3682145 k^2\\
& \qquad\qquad +6580220 k+1862400)/(16 k+17)\big) \omega_{-5}\1\\
& \quad - \big(600 k^5 (k+2)^2 (256 k^6+7900 k^5+1054568 k^4+4865734 k^3+8044197 k^2 \\
& \qquad\qquad +5415116 k+1171136)/(16 k+17)\big) \omega_{-3}\omega_{-1}\1\\
& \quad - \big(25 k^5 (k+2)^2 (137728 k^6-4923496 k^5-24095252 k^4-35522641 k^3 -12391265 k^2\\
& \qquad\qquad +8406500 k+3657600)/(16 k+17)\big) \omega_{-2}\omega_{-2}\1\\
& \quad - \big(200 k^5 (k+2)^3 (12288 k^5-487882 k^4-1447853 k^3-491400 k^2+1135840 k +463040)\\
& \qquad\qquad /(16 k+17)\big) \omega_{-1}\omega_{-1}\omega_{-1}\1\\
& \quad + 25 k^2 (544 k^4-16660 k^3-65657 k^2-72453 k-19600) W^3_{-1}W^3_{-1}\1\\
& \quad - \big(150 k^3 (k+2) (1632 k^4-54468 k^3-209305 k^2-225706 k -59200)/(16 k+17)\big) \omega_{-1}W^4_{-1}\1\\
& \quad - \big(25 k^3 (6816 k^5-206652 k^4-1172123 k^3-2196873 k^2-1637466 k -371200)/(16 k+17)\big) W^4_{-3}\1,
\end{split}
\end{equation*}

\begin{equation*}
\begin{split}
W^5_2 W^5 &=
\big(300 k^5 (k+2) (9728 k^7-345370 k^6-2884229 k^5-7339690 k^4-5652707 k^3 +3682145 k^2\\
& \qquad\qquad +6580220 k+1862400)/(16 k+17)\big) \omega_{-6}\1\\
& \quad + \big(100 k^5 (k+2)^2 (11264 k^6-476132 k^5-8238118 k^4-32234405 k^3 -50927083 k^2\\
& \qquad\qquad -34091060 k-7406592)/(16 k+17)\big) \omega_{-4}\omega_{-1}\1\\
& \quad - \big(150 k^5 (k+2)^2 (12800 k^6-428732 k^5-1910710 k^4-3040001 k^3-2661901 k^2 \\
& \qquad\qquad -1600364 k-379776)/(16 k+17)\big) \omega_{-3}\omega_{-2}\1\\
& \quad - \big(300 k^5 (k+2)^3 (12288 k^5-487882 k^4-1447853 k^3-491400 k^2+1135840 k +463040)\\
& \qquad\qquad /(16 k+17)\big) \omega_{-2}\omega_{-1}\omega_{-1}\1\\
& \quad + 25 k^2 (544 k^4-16660 k^3-65657 k^2-72453 k-19600) W^3_{-2}W^3_{-1}\1\\
& \quad - \big(75 k^3 (k+2) (1632 k^4-54468 k^3-209305 k^2-225706 k -59200)/(16 k+17)\big) \omega_{-2}W^4_{-1}\1\\
& \quad - \big(75 k^3 (k+2) (1632 k^4-54468 k^3-209305 k^2-225706 k -59200)/(16 k+17)\big) \omega_{-1}W^4_{-2}\1\\
& \quad - \big(75 k^3 (384 k^5-10608 k^4-43480 k^3-52785 k^2-21126
k-3200)/(16 k+17)\big) W^4_{-4}\1,
\end{split}
\end{equation*}
}

\newpage
{\footnotesize
\begin{equation*}
\begin{split}
W^5_1 W^5 &=
- \big(25 k^5 (k+2) (170027057152 k^{10}+2356580095488 k^9+13676829114720 k^8 \\
& \qquad\qquad +42735238046312 k^7+74793658083474 k^6+60828640020771 k^5 \\
& \qquad\qquad -18630034236787 k^4-94115224713312 k^3-92379208458276 k^2 \\
& \qquad\qquad -41524883935184 k-7347787324608)/(17 (k+1) (16 k+17)^2 (64 k+107))\big) \omega_{-7}\1\\
& \quad - \big(600 k^5 (k+2)^2 (3950979072 k^9+64351956480 k^8+426934964416 k^7 +1559551526014 k^6\\
& \qquad\qquad +3508072702889 k^5+5080429991599 k^4 +4758480876095 k^3+2785329492007 k^2 \\
& \qquad\qquad +924445415740 k+132238676112)/(17 (k+1) (16 k+17)^2 (64 k+107))\big) \omega_{-5}\omega_{-1}\1\\
& \quad + \big(150 k^5 (k+2)^2 (19297443840 k^9+207821377280 k^8+951110388112 k^7+2382556615296 k^6 \\
& \qquad\qquad +3466010214237 k^5+2750850253947 k^4 +729192953742k^3-542736421532 k^2 \\
& \qquad\qquad -472733750908 k-105657243168)/(17 (k+1) (16 k+17)^2 (64 k+107))\big) \omega_{-4}\omega_{-2}\1\\
& \quad - \big(600 k^5 (k+2)^2 (9399296 k^9+115780640 k^8+833628608 k^7+3394583580 k^6 \\
& \qquad\qquad +6845449350 k^5+3538838811 k^4-10856512820 k^3-21519656045 k^2 \\
& \qquad\qquad -14915524340 k-3556147692)/(17 (k+1) (16 k+17)^2 (64 k+107))\big) \omega_{-3}\omega_{-3}\1\\
& \quad - \big(100 k^5 (k+2)^3 (49197952 k^7+1316567424 k^6+7996912590 k^5+22351947279 k^4 +34102766376 k^3\\
& \qquad\qquad +29430436515 k^2+13488718172 k +2525554272)/(17 (k+1) (16 k+17)^2)\big) \omega_{-3}\omega_{-1}\omega_{-1}\1\\
& \quad + \big((25/2) k^5 (k+2)^3 (1808895488 k^7+12570420576 k^6+34774013520 k^5 \\
& \qquad\qquad +47177699046 k^4+28725570387 k^3+203645169 k^2-8060040140 k \\
& \qquad\qquad -2646690336)/(17 (k+1) (16 k+17)^2)\big) \omega_{-2}\omega_{-2}\omega_{-1}\1\\
& \quad + \big(200 k^5 (k+2)^4 (87914016 k^6+458469088 k^5+817692930 k^4+409776935 k^3 \\
& \qquad\qquad -394611175 k^2-507021564 k-148662512)/(17 (k+1) (16 k+17)^2)\big) \omega_{-1}\omega_{-1}\omega_{-1}\omega_{-1}\1\\
& \quad - \big((25/2) k^2 (k+2) (23714656 k^5+162824160 k^4+438179214 k^3+573852691 k^2 \\
& \qquad\qquad +361829501 k+86225720)/(17 (k+1) (64 k+107))\big) \omega_{-1}W^3_{-1}W^3_{-1}\1\\
& \quad + \big(25 k^2 (2088128 k^6+10843456 k^5+10746960 k^4-34653451 k^3-92307847 k^2 \\
& \qquad\qquad -77110884 k-20771320)/(17 (k+1) (64 k+107))\big) W^3_{-3}W^3_{-1}\1\\
& \quad - \big(300 k^3 (k+2) (504864 k^6+454764 k^5-10972571 k^4-41935547 k^3-63119109 k^2 \\
& \qquad\qquad -42865453 k-10561420)/(17 (k+1) (16 k+17)^2)\big) \omega_{-3}W^4_{-1}\1\\
& \quad + \big(150 k^3 (k+2)^2 (9637952 k^5+58430080 k^4+139113500 k^3+162223837 k^2 \\
& \qquad\qquad +92206985 k+20184520)/(17 (k+1) (16 k+17)^2)\big) \omega_{-1}\omega_{-1}W^4_{-1}\1\\
& \quad - \big((75/8) k^3 (k+2) (10958592 k^6+71925552 k^5+180045944 k^4+196657979 k^3 \\
& \qquad\qquad +56586147 k^2-47214608 k-24536960)/(17 (k+1) (16 k+17)^2)
\big) \omega_{-2}W^4_{-2}\1\\
& \quad + \big((25/2) k^3 (k+2) (81593088 k^6+609224400 k^5+1877915632 k^4+3065560099 k^3 \\
& \qquad\qquad +2797941765 k^2+1347940946 k+262697360)/(17 (k+1) (16 k+17)^2)\big) \omega_{-1}W^4_{-3}\1\\
& \quad - \big(150 k^3 (18189312 k^8+131109696 k^7+275959096 k^6-155647282 k^5 -1459360618 k^4-2182319517 k^3\\
& \qquad\qquad -1238636927 k^2-142994954 k +43729880)/(17 (k+1) (16 k+17)^2 (64 k+107))\big) W^4_{-5}\1\\
& \quad - \big(75 k (21712 k^4+134672 k^3+295445 k^2+266275 k +78990)/(17 (k+1) (16 k+17)^2)\big) W^4_{-1}W^4_{-1}\1\\
& \quad - \big((75/2) k (47824 k^4+265368 k^3+534533 k^2+455349 k+133560)/(17 (k+1) (64 k+107))\big) W^3_{-1}W^5_{-1}\1,
\end{split}
\end{equation*}
}

{\footnotesize
\begin{equation*}
\begin{split}
W^5_0 W^5 &=
- \big(200 k^5 (k+2) (5244455936 k^{10}+70196859904 k^9+398153246900 k^8 \\
& \qquad\qquad +1233587976582 k^7+2201016521153 k^6+2027732203871 k^5 \\
& \qquad\qquad +165492882072 k^4-1736353418536 k^3-1889476600504 k^2 \\
& \qquad\qquad -869314211744 k-154555776032)/((16 k+17)^2 (131 k^2+351 k+229))\big) \omega_{-8}\1\\
& \quad - \big(400 k^5 (k+2)^2 (724096768 k^9+13032033712 k^8+92438438140 k^7+355708801440 k^6 \\
& \qquad\qquad +835873623678 k^5+1257476731455 k^4+1218124586462 k^3+734755072085 k^2 \\
& \qquad\qquad +250569685256 k+36757401408)/((16 k+17)^2 (131 k^2+351 k+229))\big) \omega_{-6}\omega_{-1}\1\\
& \quad - \big(400 k^5 (k+2)^2 (214796800 k^9+2717542336 k^8+13231630738 k^7+30829431549 k^6 \\
& \qquad\qquad +28628760321 k^5-20502956550 k^4-80533588567 k^3-85066432663 k^2 \\
& \qquad\qquad -41491861540 k-7925033520)/((16 k+17)^2 (131 k^2+351 k+229))\big) \omega_{-5}\omega_{-2}\1\\
& \quad + \big(200 k^5 (k+2)^2 (268100608 k^9+2728543168 k^8+12034185208 k^7+30531316020 k^6 \\
& \qquad\qquad +50376301665 k^5+58543428453 k^4+50148887738 k^3+30806117372 k^2 \\
& \qquad\qquad +11758065152 k+1966237968)/((16 k+17)^2 (131 k^2+351 k+229))\big) \omega_{-4}\omega_{-3}\1\\
& \quad + \big(1200 k^5 (k+2)^3 (57147904 k^8+134150272 k^7-1594361818 k^6-10000010223 k^5 \\
& \qquad\qquad -25615972350 k^4-35776722309 k^3-28439133457 k^2-12042263860 k \\
& \qquad\qquad -2090245832)/((16 k+17)^2 (131 k^2+351 k+229))\big) \omega_{-4}\omega_{-1}\omega_{-1}\1\\
& \quad + \big(200 k^5 (k+2)^3 (101861888 k^8-2776465376 k^7-28779796440 k^6-112345101336 k^5 \\
& \qquad\qquad -234716374893 k^4-287794828491 k^3-207844230254 k^2-81832486600 k \\
& \qquad\qquad -13505087256)/((16 k+17)^2 (131 k^2+351 k+229))\big) \omega_{-3}\omega_{-2}\omega_{-1}\1\\
& \quad + \big(25 k^5 (k+2)^3 (410452736 k^7+3102590048 k^6+9337355932 k^5 \\
& \qquad\qquad +13766811166 k^4+9179515875 k^3+441625691 k^2-2501518260 k \\
& \qquad\qquad -881437904)/((16 k+17) (131 k^2+351 k+229))\big) \omega_{-2}\omega_{-2}\omega_{-2}\1\\
& \quad + \big(400 k^5 (k+2)^4 (632017152 k^7+4297675600 k^6+11097398636 k^5 \\
& \qquad\qquad +12238161284 k^4+1797240891 k^3-8140638059 k^2-6817742300 k \\
& \qquad\qquad -1683752944)/((16 k+17)^2 (131 k^2+351 k+229))\big) \omega_{-2}\omega_{-1}\omega_{-1}\omega_{-1}\1\\
& \quad - \big(25 k^2 (k+2) (1346512 k^5+9136020 k^4+24342198 k^3+31659247 k^2 \\
& \qquad\qquad +19927547 k+4787240)/(131 k^2+351 k+229)\big) \omega_{-2}W^3_{-1}W^3_{-1}\1\\
& \quad - \big(100 k^2 (k+2) (227056 k^5+1523040 k^4+3994369 k^3+5084081 k^2 \\
& \qquad\qquad +3107416 k+716920)/(131 k^2+351 k+229)\big) \omega_{-1}W^3_{-2}W^3_{-1}\1\\
& \quad + \big(25 k^2 (218864 k^6+1387084 k^5+3127878 k^4+2581115 k^3-382393 k^2 \\
& \qquad\qquad -1483740 k-448840)/(131 k^2+351 k+229)\big) W^3_{-4}W^3_{-1}\1\\
& \quad + \big(300 k^3 (k+2) (13304576 k^7+146690032 k^6+668880716 k^5 +1643048125 k^4+2351854427 k^3\\
& \qquad\qquad +1958853655 k^2+874329498 k +159392280)/((16 k+17)^2 (131 k^2+351 k+229)) \big) \omega_{-4}W^4_{-1}\1\\
& \quad + \big(300 k^3 (k+2)^2 (44903552 k^6+347191688 k^5+1102896188 k^4 +1838425452 k^3+1691025439 k^2\\
& \qquad\qquad +810350893 k +156998080)/((16 k+17)^2 (131 k^2+351 k+229))\big) \omega_{-2}\omega_{-1}W^4_{-1}\1\\
& \quad - \big(75 k^3 (k+2) (12201216 k^7+95826768 k^6+305102056 k^5+494975652 k^4 +412040321 k^3\\
& \qquad\qquad +138210893 k^2-14086422 k -13469960)/((16 k+17)^2 (131 k^2+351 k+229))\big) \omega_{-3}W^4_{-2}\1
\end{split}
\end{equation*}
}

\newpage
{\footnotesize
\begin{equation*}
\begin{split}
\hspace*{1cm}
& \quad + \big(300 k^3 (k+2)^2 (12518976 k^6+93551364 k^5+286503394 k^4 +459301951 k^3+405337557 k^2\\
& \qquad\qquad +185853764 k +34297340)/((16 k+17)^2 (131 k^2+351 k+229))\big) \omega_{-1}\omega_{-1}W^4_{-2}\1\\
& \quad + \big(50 k^3 (k+2) (8988192 k^6+73184988 k^5+245480957 k^4 +433108373 k^3+422458956 k^2\\
& \qquad\qquad +214916428 k +44202700)/((16 k+17) (131 k^2+351 k+229))\big) \omega_{-2}W^4_{-3}\1\\
& \quad + \big(150 k^3 (k+2) (2 k+3) (2310272 k^6+14302664 k^5+35897052 k^4 +47185363 k^3+34385905 k^2\\
& \qquad\qquad +12495594 k+1193880)/((16 k+17)^2 (131 k^2+351 k+229))\big) \omega_{-1}W^4_{-4}\1\\
& \quad - \big(50 k^3 (3095808 k^8+22633488 k^7+54967220 k^6+17421355 k^5 -157516472 k^4-327442197 k^3\\
& \qquad\qquad -312185014 k^2-170931184 k -48119360)/((16 k+17)^2 (131 k^2+351 k+229))\big) W^4_{-6}\1\\
& \quad - \big(75 k (2 k+3) (64 k+107) (368 k^3+1897 k^2+2671 k +950)\\
& \qquad\qquad /((16 k+17)^2 (131 k^2+351 k+229))\big) W^4_{-2}W^4_{-1}\1\\
& \quad - \big(50 k (2 k+3) (5 k+8) (388 k^2+863 k+405)/(131 k^2+351 k+229)
\big) W^3_{-2}W^5_{-1}\1.
\end{split}
\end{equation*}
}

\newpage
\section{Linear relations in $\tW$}\label{App:linear-relations}
\subsection{Linear relations in the weight $8$ subspace}\label{App:wt8-linear-relations}
We express $(W^3_{-2})^2\1$ and $W^3_{-1}W^4_{-2}\1$ as linear combinations
of the remaining $27$ vectors of normal form of weight $8$.
{\footnotesize
\begin{equation*}
\begin{split}
(W^3_{-2})^2\1 &=
- \big(18 k^3 (k+2) (3 k+4) (217088 k^5+1323552 k^4+1864570 k^3-459533 k^2 \\
& \qquad\qquad -1453848 k-18520)/(17 (k+1) (16 k+17)^2 (64 k+107))\big) \omega_{-7}\1\\
& \quad - \big(288 k^3 (k+2)^2 (3 k+4) (6976 k^4+112048 k^3+316803 k^2 \\
& \qquad\qquad +301883 k+91892)/(17 (k+1) (16 k+17)^2 (64 k+107))\big) \omega_{-5}\omega_{-1}\1\\
& \quad + \big(54 k^3 (k+2)^2 (3 k+4) (56320 k^4-6240 k^3-436698 k^2 \\
& \qquad\qquad -541975 k-173763)/(17 (k+1) (16 k+17)^2 (64 k+107))\big) \omega_{-4}\omega_{-2}\1\\
& \quad + \big(972 k^3 (k+2)^2 (3 k+4) (1792 k^4+5096 k^3+6100 k^2 \\
& \qquad\qquad +4783 k+229)/(17 (k+1) (16 k+17)^2 (64 k+107))\big) (\omega_{-3})^2\1\\
& \quad + \big(72 k^3 (k+2)^3 (3 k+4) (920 k^2+198 k +187)/(17 (k+1) (16 k+17)^2)\big) \omega_{-3}(\omega_{-1})^2\1\\
& \quad + \big(9 k^3 (k+2)^3 (3 k+4) (5792 k^2-1566 k -3425)/(17 (k+1) (16 k+17)^2)\big) (\omega_{-2})^2\omega_{-1}\1\\
& \quad - \big(1008 k^3 (k+2)^4 (3 k+4) (6 k-5)/(17 (k+1) (16 k+17)^2)\big) (\omega_{-1})^4\1\\
& \quad + \big(9 (k+2) (26 k+83)/(17 (k+1) (64 k+107))\big) \omega_{-1}(W^3_{-1})^2\1\\
& \quad - \big(6 (380 k^2+822 k+301)/(17 (k+1) (64 k+107))\big) W^3_{-3}W^3_{-1}\1\\
& \quad + \big(54 k (k+2) (120 k^2+141 k-34)/(17 (k+1) (16 k+17)^2)\big) \omega_{-3}W^4_{-1}\1\\
& \quad - \big(36 k (k+2)^2 (36 k+61)/(17 (k+1) (16 k+17)^2)\big) (\omega_{-1})^2W^4_{-1}\1\\
& \quad + \big(27 k (k+2) (784 k^2+1565 k+664)/(68 (k+1) (16 k+17)^2)\big) \omega_{-2}W^4_{-2}\1\\
& \quad + \big(9 k (k+2) (496 k^2+889 k+214)/(17 (k+1) (16 k+17)^2)\big) \omega_{-1}W^4_{-3}\1\\
& \quad + \big(54 k (2688 k^4+9208 k^3+9951 k^2+5793 k \\
& \qquad\qquad +3788)/(17 (k+1) (16 k+17)^2 (64 k+107))\big) W^4_{-5}\1\\
& \quad + \big(18/(17 k (k+1) (16 k+17)^2)\big) (W^4_{-1})^2\1\\
& \quad + \big(9/(17 k (k+1) (64 k+107))\big) W^3_{-1}W^5_{-1}\1,
\end{split}
\end{equation*}

\begin{equation*}
\begin{split}
W^3_{-1}W^4_{-2}\1 &=
\big(8 k^2 (k+2) (2 k+3) (320 k^2-155 k-621)/(64 k+107)\big) \omega_{-4}W^3_{-1}\1\\
& \quad - \big(8 k^2 (k+2)^2 (2 k+3) (136 k-109)/(64 k+107)\big) \omega_{-2}\omega_{-1}W^3_{-1}\1\\
& \quad + \big(16 k^2 (k+2) (2 k+3) (40 k^2-310 k-327)/(64 k+107)\big) \omega_{-3}W^3_{-2}\1\\
& \quad + \big(16 k^2 (k+2)^2 (2 k+3) (136 k-109)/(3 (64 k+107))\big) (\omega_{-1})^2W^3_{-2}\1\\
& \quad + \big(8 k^2 (k+2) (2 k+3) (168 k^2-16 k+31)/(64 k+107)\big) \omega_{-2}W^3_{-3}\1\\
& \quad - \big(20 k^2 (k+2) (2 k+3) (64 k^2+167 k-180)/(64 k+107)\big) \omega_{-1}W^3_{-4}\1\\
& \quad + \big(24 k^2 (2 k+3) (32 k^3+344 k^2+347 k-516)/(64 k+107)\big) W^3_{-6}\1\\
& \quad + (4/3) W^3_{-2}W^4_{-1}\1\\
& \quad + \big(4 k (k+2) (16 k+17)/(64 k+107)\big) \omega_{-2}W^5_{-1}\1\\
& \quad - \big(8 k (k+2) (16 k+17)/(5 (64 k+107))\big) \omega_{-1}W^5_{-2}\1\\
& \quad - \big(8 k (7 k+9) (16 k+17)/(5 (64 k+107))\big) W^5_{-4}\1,
\end{split}
\end{equation*}
}

\subsection{Linear relations in the weight $9$ subspace}\label{App:wt9-linear-relations}
We can express $W^3_{-3}W^3_{-2}\1$, $W^3_{-2}W^4_{-2}\1$,
$W^3_{-1}W^4_{-3}\1$ and $W^3_{-1}W^5_{-2}\1$ as linear combinations of the
remaining $40$ vectors of normal form of weight $9$. For instance,
$W^3_{-1}W^4_{-3}\1$ is expressed as follows. We omit the expression of
$W^3_{-3}W^3_{-2}\1$, $W^3_{-2}W^4_{-2}\1$ and $W^3_{-1}W^5_{-2}\1$, for
they are not used in our argument. {\footnotesize
\begin{equation*}
\begin{split}
W^3_{-1}W^4_{-3}\1 &=
\big(16 k^2 (k+2) (1283648 k^5+3440448 k^4-3245504 k^3-18095627 k^2-18583431 k-5789692)\\
& \qquad\qquad /((64 k+107) (1424 k^2+3241 k+1542)) \omega_{-5}W^3_{-1}\1\\
& \quad - \big(8 k^2 (k+2)^2 (455808 k^4+2157980 k^3+3327583 k^2+1752535 k+133700)\\
& \qquad\qquad /((64 k+107) (1424 k^2+3241 k+1542)) \omega_{-3}\omega_{-1}W^3_{-1}\1\\
& \quad - \big(k^2 (k+2)^2 (8192 k^3-432 k^2-30515 k-15420)/(1424 k^2+3241 k+1542) (\omega_{-2})^2W^3_{-1}\1\\
& \quad + \big(16 k^2 (k+2)^3 (674 k^2+637 k-1100)/(1424 k^2+3241 k+1542) (\omega_{-1})^3W^3_{-1}\1\\
& \quad + \big(8 k^2 (k+2) (5 k+8) (16 k+17) (58944 k^3-96692 k^2-505205 k-340649)\\
& \qquad\qquad /(3 (64 k+107) (1424 k^2+3241 k+1542)) \omega_{-4}W^3_{-2}\1\\
& \quad + \big(4 k^2 (k+2)^2 (5 k+8) (16 k+17) (11248 k^2-3953 k-6251)\\
& \qquad\qquad /(3 (64 k+107) (1424 k^2+3241 k+1542)) \omega_{-2}\omega_{-1}W^3_{-2}\1\\
& \quad + \big(8 k^2 (k+2) (1209472 k^5+1405772 k^4-12112961 k^3-34155325 k^2-32424710 k-10585768)\\
& \qquad\qquad /((64 k+107) (1424 k^2+3241 k+1542)) \omega_{-3}W^3_{-3}\1\\
& \quad + \big(32 k^2 (k+2)^2 (627872 k^4+2346827 k^3+2091732 k^2-1239437 k-1843412)\\
& \qquad\qquad /(3 (64 k+107) (1424 k^2+3241 k+1542)) (\omega_{-1})^2W^3_{-3}\1\\
& \quad + \big(3 k^2 (k+2) (4445184 k^5+20157312 k^4+34479508 k^3+27362195 k^2+8585804 k-569072)\\
& \qquad\qquad /((64 k+107) (1424 k^2+3241 k+1542)) \omega_{-2}W^3_{-4}\1\\
& \quad - \big(8 k^2 (k+2) (6894208 k^5+54479264 k^4+153131647 k^3+192924011 k^2+105647314 k\\
& \qquad\qquad +17606072)/(3 (64 k+107) (1424 k^2+3241 k+1542)) \omega_{-1}W^3_{-5}\1\\
& \quad + \big(8 k^2 (1120640 k^6+26816288 k^5+152607907 k^4+385419551 k^3+501638434 k^2\\
& \qquad\qquad +336093968 k+94480832)/((64 k+107) (1424 k^2+3241 k+1542)) W^3_{-7}\1\\
& \quad - \big((16 k+17) (64 k+107)/(k (1424 k^2+3241 k+1542)) (W^3_{-1})^3\1\\
& \quad + \big(4 (k+2) (358 k+559)/(1424 k^2+3241 k+1542) \omega_{-1}W^3_{-1}W^4_{-1}\1\\
& \quad + \big(8 (449 k^2+1392 k+1075)/(1424 k^2+3241 k+1542) W^3_{-3}W^4_{-1}\1\\
& \quad + \big(112 k (k+2) (9 k^2+32 k+31)/(1424 k^2+3241 k+1542) \omega_{-3}W^5_{-1}\1\\
& \quad + \big(112 k (k+2)^2 (3 k+4)/(1424 k^2+3241 k+1542) (\omega_{-1})^2W^5_{-1}\1\\
& \quad - \big(2 k (k+2) (192 k^2+229 k-400)/(5 (1424 k^2+3241 k+1542)) \omega_{-2}W^5_{-2}\1\\
& \quad - \big(16 k (k+2) (16 k+17) (1686 k^2+3815 k+1786)\\
& \qquad\qquad /(5 (64 k+107) (1424 k^2+3241 k+1542)) \omega_{-1}W^5_{-3}\1\\
& \quad - \big(96 k (16 k+17) (2687 k^3+10292 k^2+12927 k+5342)\\
& \qquad\qquad /(5 (64 k+107) (1424 k^2+3241 k+1542)) W^5_{-5}\1\\
& \quad - \big(4/(k (1424 k^2+3241 k+1542)) W^4_{-1}W^5_{-1}\1.
\end{split}
\end{equation*}
}

\section{$\bu^0$, $\bu^1$, $\bu^2$ and $\bu^3$ in Case $k=5$}
\label{App:Case-k-5-singular-vec}

We express $\bu^0 = f(0)^{k+1}e(-1)^{k+1}\1$ and $\bu^r = (W^3_1)^r \bu^0$,
$r=1,2,3$ as linear combinations of the vectors of normal form in the case
$k = 5$.

{\footnotesize
\begin{equation*}
\begin{split}
\bu^0 &=
-(56260915200/97)\omega_{-5}\1 -(47822745600/97)\omega_{-3}\omega_{-1}\1
+ (43180603200/97)(\omega_{-2})^2\1\\
& \quad + (33230937600/97)(\omega_{-1})^3\1 -(4032/5)(W^3_{-1})^2\1
+ (550368/97)\omega_{-1}W^4_{-1}\1 + (340704/97)W^4_{-3}\1,
\end{split}
\end{equation*}
}

{\footnotesize
\begin{equation*}
\begin{split}
\bu^1 &=
-(17721088761600/5917)\omega_{-3}W^3_{-1}\1
+ (13262835801600/5917)(\omega_{-1})^2W^3_{-1}\1\\
& \quad + (221863017600/61)\omega_{-2}W^3_{-2}\1
- (365470963200/97)\omega_{-1}W^3_{-3}\1
+ (21001925203200/5917)W^3_{-5}\1\\
& \quad - (2122848/97)W^3_{-1}W^4_{-1}\1
- (89631360/61)\omega_{-1}W^5_{-1}\1
- (38413440/61)W^5_{-3}\1,
\end{split}
\end{equation*}
}

{\footnotesize
\begin{equation*}
\begin{split}
\bu^2 &=
(8181452462686782123600000/9757133)\omega_{-7}\1
+ (8868381288151420627200000/9757133)\omega_{-5}\omega_{-1}\1\\
& \quad - (5147471345450314255200000/9757133)\omega_{-4}\omega_{-2}\1
+ (23321410696693972800000/9757133)(\omega_{-3})^2\1\\
& \quad + (47380877265410942400000/159953)\omega_{-3}(\omega_{-1})^2\1
- (41194303604229799800000/159953)(\omega_{-2})^2\omega_{-1}\1\\
& \quad - (32478871712964566400000/159953)(\omega_{-1})^4\1
+ (498585049704000/1037)\omega_{-1}(W^3_{-1})^2\1\\
& \quad - (42034377168000/1037)W^3_{-3}W^3_{-1}\1
- (8566112126376000/159953)\omega_{-3}W^4_{-1}\1\\
& \quad - (524887446958560000/159953)(\omega_{-1})^2W^4_{-1}\1
+ (30178345962618000/159953)\omega_{-2}W^4_{-2}\1\\
& \quad - (340143285584592000/159953)\omega_{-1}W^4_{-3}\1
+ (357554169263088000/9757133)W^4_{-5}\1\\
& \quad + (89784495360/159953)(W^4_{-1})^2\1
+ (13751156160/1037)W^3_{-1}W^5_{-1}\1,
\end{split}
\end{equation*}
}

{\footnotesize
\begin{equation*}
\begin{split}
\bu^3 &=
(633349572703577384054400000/45093457)\omega_{-5}W^3_{-1}\1\\
& \quad - (304333657131610010822400000/45093457)\omega_{-3}\omega_{-1}W^3_{-1}\1\\
& \quad + (1141529140148275607400000/464881)(\omega_{-2})^2W^3_{-1}\1\\
& \quad + (69658304251736590588800000/45093457)(\omega_{-1})^3W^3_{-1}\1\\
& \quad + (175753574599043599200000/464881)\omega_{-4}W^3_{-2}\1\\
& \quad - (620956662666585739200000/464881)\omega_{-2}\omega_{-1}W^3_{-2}\1\\
& \quad - (5158511194620039076800000/45093457)\omega_{-3}W^3_{-3}\1\\
& \quad + (820496583733854986582400000/45093457)(\omega_{-1})^2W^3_{-3}\1\\
& \quad + (15005173408252695423000000/464881)\omega_{-2}W^3_{-4}\1\\
& \quad - (2704800801881903228784000000/45093457)\omega_{-1}W^3_{-5}\1\\
& \quad + (3141272873181195084427200000/45093457)W^3_{-7}\1
- (63906101745998400/7621)(W^3_{-1})^3\1\\
& \quad + (58024030066093728000/739237)\omega_{-1}W^3_{-1}W^4_{-1}\1
+ (70446353688003384000/739237)W^3_{-3}W^4_{-1}\1\\
& \quad + (29475630099262095840000/45093457)\omega_{-3}W^5_{-1}\1
+ (57072561118296796800000/45093457)(\omega_{-1})^2W^5_{-1}\1\\
& \quad - (5862027033141119568000/45093457)\omega_{-2}W^5_{-2}\1
- (82578847924067040000/464881)\omega_{-1}W^5_{-3}\1\\
& \quad - (2748731861102520384000/464881)W^5_{-5}\1
- (434544013612800/739237)W^4_{-1}W^5_{-1}\1.
\end{split}
\end{equation*}
}

\end{document}